\g@addto@macro\normalsize{%
  \setlength\abovedisplayskip{4pt}
  \setlength\belowdisplayskip{4pt}
  \setlength\abovedisplayshortskip{4pt}
  \setlength\belowdisplayshortskip{4pt}
}
\numberwithin{equation}{section}
\crefname{section}{Section}{Sections}
\crefname{subsection}{Subsection}{Subsections}
\crefname{condition}{Condition}{Conditions}
\crefname{hypothesis}{Hypothesis}{Conditions}
\crefname{assumption}{Assumption}{Assumptions}
\crefname{lemma}{Lemma}{Lemmas}
\crefname{definition}{Definition}{Definitions}
\numberwithin{equation}{section}
\newtheorem{theorem} {Theorem}[section]
\newtheorem{lemma}{Lemma}[section]
\newtheorem{counter example}{Counter Example}[section]
\newtheorem{remark} {Remark}[section]
\newtheorem{definition} {Definition}[section]
\def\CC{{\rm \kern.24em \vrule width.02em height1.4ex depth-.05ex \kern-.26emC}}
\def\TagOnRight
\def\AA{{it I} \hskip-3pt{\tt A}}
\def\QQ{\rlap {\raise 0.4ex \hbox{$\scriptscriptstyle |$}} {\hskip -0.1em Q}}
\newcommand{\vo}{\vec{o}\@ifnextchar{^}{\,}{}}
\def\YYint#1#2#3{{\setbox0=\hbox{$#1{#2#3}{\iint}$}
    \vcenter{\hbox{$#2#3$}}\kern-.50\wd0}}
\def\XXint#1#2#3{{\setbox0=\hbox{$#1{#2#3}{\int}$}
    \vcenter{\hbox{$#2#3$}}\kern-.50\wd0}}
\def\namedlabel#1#2{\begingroup
   \def\@currentlabel{#2}%
   \label{#1}\endgroup
}
\newcommand{\rmh}[1]{\mathpalette{\raisem@th{#1}}}
\newcommand{\raisem@th}[3]{\hspace*{-1pt}\raisebox{#1}{$#2#3$}}
\newcounter{desccount}
\newcommand{\descitem}[2]{\item[#1]\refstepcounter{desccount}\label{#2}}
\newcommand{\descref}[2]{\hyperref[#1]{\textnormal{\textcolor{black}{}\textcolor{blue}{ #2}\textcolor{black}{}}}}
\newcommand{\dref}[2]{\hyperref[#1]{\textcolor{black}{(}\textcolor{blue}{\bf #2}\textcolor{black}{)}}}
\newcommand{\be} {\begin{eqnarray}}
\newcommand{\ee} {\end{eqnarray}}
\newcommand{\Bea} {\begin{eqnarray*}}
\newcommand{\Eea} {\end{eqnarray*}}
 \newcommand{\al} {\alpha}
\newcommand{\rr}{\rightarrow}
\newcommand{\ti}{\tilde}
\newcommand{\T}  {\theta}
\newcommand{\p}  {\prime}
\newcommand{\e}  {\epsilon}
\newcommand{\f}{\infty}
\newcommand{\R}{\mathbb{R}}
\newcommand{\noi} {\noindent}
\newcommand{\ga}{\gamma}
\newcommand{\abs}[1]{\left| #1\right|}
\newcounter{whitney}
\newcounter{ineqcounter}
\def\ps@pprintTitle{%
\let\@oddhead\@empty
\let\@evenhead\@empty
\def\@oddfoot{}%
\let\@evenfoot\@oddfoot}
\newcommand{\refcheckize}[1]{%
  \expandafter\let\csname @@\string#1\endcsname#1%
  \expandafter\DeclareRobustCommand\csname relax\string#1\endcsname[1]{%
    \csname @@\string#1\endcsname{##1}\wrtusdrf{##1}}%
  \expandafter\let\expandafter#1\csname relax\string#1\endcsname
}
\newcommand{\mainsectionstyle}{%
	\renewcommand{\@secnumfont}{\bfseries}
	\renewcommand\section{\@startsection{section}{2}%
		\z@{.5\linespacing\@plus.7\linespacing}{-.5em}%
		{\normalfont\bfseries}}%
}
\xpatchcmd{\MaketitleBox}{\hrule}{}{}{}
\xpatchcmd{\MaketitleBox}{\hrule}{}{}{}
\date{}
\begin{document}
	
	\begin{frontmatter}
		
		\title{Exact and optimal controllability for scalar conservation laws with discontinuous flux}
		
		\author[myaddress]{Adimurthi}
		\ead{aditi@math.tifrbng.res.in}
		
		\author[myaddress]{Shyam Sundar Ghoshal}
		\ead{ghoshal@tifrbng.res.in}
		
		
		\address[myaddress]{Centre for Applicable Mathematics,Tata Institute of Fundamental Research, Post Bag No 6503, Sharadanagar, Bangalore - 560065, India.}

		\begin{abstract}
			This paper describes the reachable set and resolves  an optimal control problem   for the scalar  conservation laws with discontinuous
			flux. We give a necessary and sufficient criteria for the reachable set. A new backward resolution has been described to obtain the reachable set.
			Regarding the optimal control problem we first prove the existence of a  minimizer and then the  backward algorithm  allows us to compute it. The same method also  applies to compute the initial data control for an exact control problem. 
			Our methodology for the proof relies on the explicit formula for the conservation laws with the discontinuous flux and  finer properties of the 
			characteristics curves. 		
		\end{abstract}
		\begin{keyword}
			Scalar conservation laws, discontinuous flux, 
			exact control, reachable sets, optimal control, Hamilton-Jacobi equation. 
		\end{keyword}
		
	\end{frontmatter}
	\tableofcontents	
	
	\section{Introduction}\label{introduction}
	
	\setcounter{equation}{0}
	The goal of this paper is to study the reachable sets, optimal controllability and exact controllability  of  the following scalar conservation laws  with discontinuous flux 
	\begin{eqnarray}\label{conlaw-equation}
	\left\{ \begin{array}{lll}
	u_t+F(x,u)_x=0, &\mbox{if}& x\in \R,\ t>0,\\
	\ \ \ \ \ \ \ \  u(x,0)=u_0(x), &\mbox{if}& x\in \R,
	\end{array}
	\right.
	\end{eqnarray}
	where the flux $F$ is given by, $F(x,u)=H(x)f(u)+(1-H(x))g(u)$, $H$ is the Heaviside function. 
	Through out this article we assume the fluxes $f,g$ to be $C^1(\mathbb{R})$, 
	strictly convex with  superlinear growth (i.e., $\lim\limits_{|p|\rr \f} \left(\frac{f(p)}{|p|}, \frac{g(p)}{|p|}  \right) =(\f,\f)$) and initial data  $u_0\in L^\infty(\mathbb{R})$.
	We denote by $\theta_f,\theta_g$  the unique minima of the fluxes $f,g$ respectively. In this article, by entropy solution we mean a weak solution to \eqref{conlaw-equation} satisfying interface entropy condition as in \cite{Kyoto}.
	
	Here we explore three aspects of control theory in conservation laws with discontinuous flux: (i) characterization of reachable set, (ii) exact controlability and (iii) optimal controlability. Above three problems are classical and they are answered for $f=g$ case in \cite{Sco, Sop}. It is an open question for $f\neq g$ case. One may think of clubbing states obtained from two boundary control problems which are separately known from \cite{Sco, Sop}. Unfortunately, this does not work since the equation (\ref{conlaw-equation}) is completely different from solving two different boundary value problems. Furthermore, adopting the method of backward construction \cite{Sco, Sop} to characterize  the reachable set is a big challenge due to the following facts:
	\begin{itemize}
		\item [(1)] Unlike the scalar conservation laws, for \eqref{conlaw-equation}, $L^1$--contraction is still unknown in general setting even if $f,g$ are convex. 
		\item [(2)] Entropy solutions do not admit rarefaction  waves from the interface $\{x=0\}$ (see subsection \ref{NR}).
		\item [(3)] Reflected characteristic curves (see definition \ref{reflected}) from the boundary can occur in the structure of entropy solution to \eqref{conlaw-equation}.
	\end{itemize}
	We resolve the above difficulties by introducing a new backward construction to characterize the reachable sets. Then we  adopt this to obtain the optimal control result. 
	

	The scalar conservation laws with discontinuous flux of type (\ref{conlaw-equation}) has a huge variety of applications in several fields, namely
	traffic flow modeling,  modeling gravity, modeling continuous sedimentation
	in  clarifier-thickener units, ion etching in  the semiconductor industry and many more.
	In the past two decades the first order model of type (\ref{conlaw-equation}) has been extensively studied from both the theoretical 
	and numerical point of view. Concerning the uniqueness it is worth to mention that the following Kru\v{z}kov
	type entropy inequalities, in both  the two upper quarter-planes are not sufficient to guarantee the uniqueness, 
	
	\begin{equation}\label{Kruzkov2}
	\begin{array}{lll}
	\int\limits_0^\infty \int\limits_0^\infty \left(\phi_1(u)\frac{\partial s}{\partial t}+\psi_1(u)\frac{\partial s}{\partial x}\right)&\geq 
	-\int\limits_0^\infty \psi_1(u(0+,t))s(0,t)dt,\\
	\int\limits_{-\infty}^0 \int\limits_0^\infty \left(\phi_2(u)\frac{\partial s}{\partial t}+\psi_2(u)\frac{\partial s}{\partial x}\right)&\geq 
	\int\limits_0^\infty \psi_2(u(0+,t))s(0,t)dt.
	\end{array}
	\end{equation}
	Here $(\phi_1,\psi_1)$ denote the entropy pair corresponding to the flux $f$, $(\phi_2,\psi_2)$ denote the entropy pair corresponding to the flux $g$,  and $s\in C_0^1(\mathbb{R}\times \mathbb{R}_+)$, a non-negative test function. 
	Consequently
	one need an extra criteria on the interface called ``interface entropy condition" (see \cite{Kyoto}) given by 
	\begin{equation}\label{entropy-condition}
	\mbox{meas}\{t: f^\prime(u(0+,t))>0, g^\prime(u(0-,t))<0\}=0.
	\end{equation}
	Using this extra entropy along with the above Kru\v{z}kov type inequalities the 
	uniqueness result has been obtained in \cite{Kyoto}.
	On the other hand, the existence result has been proved in several ways, namely via Hamilton-Jacobi, 
	convergence of numerical schemes, vanishing viscosity method, for further details we refer the reader to  \cite{Kyoto, Siam, Jhde, Jde,  And1, Burger,
		BurKarRisTow, Diehl5, Gimseresebro, Towers} and the references therein. The present article 
	uses the explicit formula obtained in  \cite{Kyoto}, via  the  Hamilton-Jacobi Cauchy problem.
	By using this formula it can be shown that if the initial data $v_0$ is uniformly Lipschitz then the viscosity solution $v(\cdot, t)$ is also uniformly Lipschitz, for all $t>0$. Let $u:=\frac{\partial v}{\partial x}$, then $u$ is the unique weak solution (see \cite{Kyoto}, Theorem 2.2) of
	(\ref{conlaw-equation}),  enjoys (\ref{entropy-condition}) near interface and 
	satisfies the following Rankine-Hugoniot condition on the interface.
	\begin{equation}\label{RH-condition}	
	\textrm{meas}\big\{t:f(u(0+,t))\neq g(u(0-,t))\big\}=0.
	\end{equation} 
	Note that in general TV of entropy solution to \eqref{conlaw-equation} can blow up \cite{Cpam,Ghoshal} at finite time even for BV initial data which makes the current article more technical while obtaining the compactness. Regarding  the well-posedness theory to $f=g$ case, we refer the reader to \cite{Da1} for Cauchy problem 
	and for the initial boundary value problem to \cite{Jos}.
	
	
	Concerning the exact controllability for the scalar convex conservation laws the first work has been done in \cite{An1}, where they considered
	the initial boundary value problem in a quarter plane with $u_0=0$ and by using  one boundary control they investigated  the reachable set. 
	As in \cite{Sco}, they considered  $u_0\in L^\infty$ and three possible cases, namely pure initial value problem with 
	initial data control outside any domain, initial boundary value problem in a quarter plane with one boundary control and initial boundary problem in a strip with
	two boundary controls to get the reachable sets in a complete generality. In both the articles the Lax-Oleinik type formulas has been exploited.
	An alternative approach has been provided in \cite{Hor} by using the return method (see \cite{Coron,Coron-book}). For the viscous Burgers equation any non-zero
	state can be reached in finite time by two boundary controls \cite{Glass}, recently, it has been proved \cite{And3} that there exist many pairs $(C,T)$ so that the state $C$ is not reachable from zero state at time $T$ for the viscosity 1.
	Control theory for the system of conservation
	laws is still largely open. We refer to \cite{SamJEE,An2,Bre1,CoronShyam,Glass-3D,Glass-Euler} and references therein for controllability results on system of hyperbolic conservation laws.
	
	Let us briefly discuss the optimal controllability results for the case $f=g$.  Assume  the target function
	$k\in L^2_{loc}(\R)$,  support of $f'(k)$ is compact and $T>0$. We denote by  $J_{\{f=g\}}$, a cost functional, defined in the following way 
	\begin{equation}\begin{array}{ll}
	J_{\{f=g\}}(u_0)=\int\limits_{-\infty}^{\infty}|f^\prime(u(x,T))- f^\prime(k(x))|^2dx ,
	\end{array}                         
	\label{optimalcontrolf=g}\end{equation}
	where $u_0\in L^\infty(\mathbb{R})$, $u_0\equiv\theta_f$ outside a compact set, $\theta_f$  being the only critical point of the flux $f$.
	Here $u(\cdot,T)$ denotes the unique  weak  solution  at $t=T$ to the Cauchy problem 
	(\ref{conlaw-equation}), in the case  $f=g$ with  initial datum $u_0$. Then in this case, 
	the optimal control reads like: find a $w_0$ such that $J_{\{f=g\}}(w_0)=\min\limits_{u_0}J_{\{f=g\}}(u_0).$
	In \cite{Cas, Cas2}, they considered the above optimal control problem for the  Burgers' equation and proved such  minimizer exists and proposed a 
	numerical scheme called ``alternating decent algorithm", although the convergence of these scheme  still remains open.
	Whereas in \cite{Sop}, they made use of the Lax-Oleinik formula and derived a numerical backward construction which 
	converges to a solution  of the above problem. The latter method can be applied also to  general convex fluxes as long as a
	Lax-Oleinik type formula is available. It has to be noticed that even for the case $f=g$, 
	due to the occurrence of the shocks in the solution of (\ref{conlaw-equation}), one may have several minimizers of the optimal control problem
	(\ref{optimalcontrolf=g}).
	
	One if the main results in this paper is to characterize the reachable set (see subsection \ref{sec:reachable}) and then we prove the exact and optimal controllability (subsections \ref{ec} and \ref{osection} respectively) for (\ref{conlaw-equation}). In order to do so, we divide $\R\times (0,T)$ into three sub domains: $D_1$, $D_2$ and $D_3$ (see subsection \ref{section4.1}). These three domains correspond to the solution (a) with reflected characteristics, (b) having interface interactions and (c) solving pure initial value problem (i.e. $f=g$ case) respectively. 
		Now we define a reachable set at $t=T$ in such a way that a given solution corresponds to an element in the reachable set. This imposes a constraint on the elements of the reachable set. Then for any element in the reachable set, using this constraint, we first construct a data in $\bar{D}_1\cap \{(x,t): t=0\}$ and the solution.
		Using the `no forward interface rarefaction' from the $t$-axis (see lemma \ref{2.2}), we construct another initial data and a solution in $D_2$. Construction of solution in $D_3$ is as in the $f=g$ case.
		Using the R-H condition, we glue all the three solutions to form a single solution which corresponds to the given element in the reachable set.
 In the construction, we use finer analysis of characteristic curves from \cite{Sco,Sop,Ssh} and explicit formula from \cite{Kyoto}. Similar construction is valid (see section \ref{connection}) for $(A,B)$-connection \cite{Jde}. 
	
	The paper is organized as follows. In the next three subsections we state our main results. 
	In section \ref{PRE}, we recall some known results from \cite{Kyoto}.  Section \ref{technical} deals with the non existence of forward rarefaction from the interface, backward construction for shock and continuous solution. Also the construction of $(\tau_0,\xi_0)$ which is used to define the reachable set $\mathcal{R}(T)$. Subsections \ref{newsection} and section  \ref{backward} deals with the backward construction when the reflected characteristics exist. Sections  \ref{backward} and  \ref{optimalcontrol} proves the main results of the paper. In section \ref{connection} we indicate how to extend the above results for the $(A,B)$ entropy solution. In section \ref{appendix}, we prove a stability lemma which is used to prove the main results.
	
	Through out this article we assume that $f(\T_f)\leq g(\T_g)$. The other case $f(\T_f)> g(\T_g)$ follows in a similar way.
	
	\textbf{Authors declaration:} It is to be noted most of the ideas and technical details was in the arxiv version but   that there was a gap in the proof in \cite{Arxiv} because the reflected characteristics was not being considered in the proof and hence the definition of reachable set was incomplete. In this article we fill this gap in subsection \ref{newsection} and present a modified version of  \cite{Arxiv}. 
	\subsection{Reachable Set}\label{sec:reachable}	
Let  $\bar{\bar{\T}}_g \leq \T_f \leq \bar{\T}_g$ such that  $f(\bar{\bar{\T}}_g)=f(\bar{\T}_g)=g(\T_g).$
	Then we define:\\
	\noindent	{\bf Reachable set}: Let $T>0$, $0\le R_{2}\le R_{1}$, $y:(-\f, R_{2})\cup(R_{1}, \f)\rr\R$ be a function be given. Then $(T, R_{1}, R_2, y(\cdot))$ is called an element in the reachable set if the following holds:
	\begin{description}
		\descitem{(1)}{1pg4} $y(\cdot)$ is an non-decreasing function such that
		\item[(i)] $y(x)\le0$ for all $x\in(-\f, R_{2})$.
		\item[(ii)] $y(x)\ge0$ for all $x\in(R_{1}, \f)$.
		\item[(iii)] $\sup\{|x-y(x)|: x\in(-\f, R_{2})\cup(R_{1}, \f)\}<\f$.
		\descitem{(2)}{2pg4} Let $\gamma_{0}(t)=R_{1}+f'(\bar{\theta}_{g})(t-T)$ and we denote  $(0,T(R_1))$ to be the point of intersection of lines $t$-axis and $\ga_0$, i.e.,  $\gamma_{0}(T(R_{1}))=0$. Suppose $T(R_{1})\ge 0$, let $(\tau_{0}, \xi_{0})$ be as in lemma \ref{3.6} with $\bar{\al}=\bar{\T}_g$, $T_{1}=T(R_{1})$, then 
		\begin{equation*}
		\xi_{0}\le y(R_{1}+).
		\end{equation*}  
		\descitem {(3)}{3pg4} If $R_{2}<R_{1}$, then $T(R_{1})\ge 0$.
	\end{description}
	Denote
	\begin{equation}\label{def:reachable}
	\mathcal{R}(T)=\{(T, R_{1}, R_2, y(\cdot)): \mbox{they satisfy \descref{1pg4}{(1)}, \descref{2pg4}{(2)} and \descref{3pg4}{(3)}}\}
	\end{equation}
	is called the reachable set.
	
	Then we have the following theorem.
	\begin{theorem}\label{theorem1.1}
		Let $(T, R_{1}, R_2, y(\cdot))\in \mathcal{R}(T)$ if and only if there exists a $u_0\in L^\f(\R)$ and the solution $u$ of (\ref{conlaw-equation}) such that for $i=1,2$, $R_i=R_i(T)$, $y(x)=y(x,T)$, where $R_i$ and $y(x,T)$ defined in theorem \ref{theorem3.1}.
	\end{theorem}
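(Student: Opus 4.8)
The plan is to prove the biconditional by establishing the two implications separately, treating the forward direction as essentially bookkeeping around theorem \ref{theorem3.1} and devoting the main effort to the backward construction. For the implication that a realized profile lies in $\mathcal{R}(T)$: given $u_0\in L^\f(\R)$ and its entropy solution $u$, theorem \ref{theorem3.1} already associates the data $R_1(T),R_2(T)$ and the backward characteristic map $y(\cdot,T)$, and the reachable set \eqref{def:reachable} is designed precisely so that these extracted quantities obey \descref{1pg4}{(1)}, \descref{2pg4}{(2)} and \descref{3pg4}{(3)}. I would read off \descref{1pg4}{(1)} from the Lax-Oleinik optimal characteristics furnished by the explicit formula of \cite{Kyoto}: monotonicity of $y$ and the uniform bound on $|x-y(x)|$ are the no-crossing and finite-speed properties of backward characteristics for strictly convex fluxes, while the sign conditions record that for $x<R_2$ the optimal characteristic originates in the $g$-region (so $y\le0$) and for $x>R_1$ in the $f$-region (so $y\ge0$). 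Condition \descref{3pg4}{(3)} is the geometric consistency forcing the interface ray $\gamma_0$ to meet $\{x=0\}$ at a nonnegative time when $R_2<R_1$, and \descref{2pg4}{(2)} encodes admissibility of the reflected characteristic through lemma \ref{3.6}.

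For the reverse implication I would carry out the backward construction outlined in the introduction. Starting from an abstract element of $\mathcal{R}(T)$, I would first partition $\R\times(0,T)$ into the three subdomains $D_1,D_2,D_3$ of subsection \ref{section4.1}, corresponding respectively to reflected characteristics, interface interactions, and the pure ($f=g$) Cauchy problem. In $\bar D_1\cap\{t=0\}$ I would use the constraint \descref{2pg4}{(2)} together with the reflected-characteristic geometry of lemma \ref{3.6} to define the initial datum and the solution carrying the reflected characteristics. In $D_2$ I would invoke the non-existence of forward interface rarefactions (lemma \ref{2.2}): since no rarefaction can emanate from $\{x=0\}$, the characteristics filling $D_2$ are pinned down, and I can read off a matching initial datum and solution. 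In $D_3$ the construction is the classical one of \cite{Sco,Sop} for a single convex flux. Finally I would glue the three pieces along $\{x=0\}$ and along the common boundaries $\partial D_1\cap\partial D_2$ and $\partial D_2\cap\partial D_3$ using the Rankine-Hugoniot relation \eqref{RH-condition}, obtaining a single bounded datum $u_0\in L^\f(\R)$ whose entropy solution reproduces $R_1,R_2$ and $y$ at time $T$.

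The hard part will be the gluing and the verification that the patched object is a genuine entropy solution. Because $L^1$-contraction is unavailable here and the total variation may blow up, I cannot compare the constructed solution with a reference solution via a stability estimate on the whole line; instead I must verify the interface entropy condition \eqref{entropy-condition} and the Rankine-Hugoniot condition \eqref{RH-condition} along $\{x=0\}$ directly, and confirm that the reflected characteristics built in $D_1$ match the interface interactions in $D_2$, and that the latter match the $f=g$ characteristics in $D_3$, with no crossing across the seams. The structural constraint \descref{2pg4}{(2)}, namely $\xi_0\le y(R_1+)$, is exactly what should guarantee the matching between $D_1$ and the region to the right of $R_1$; checking that it suffices to yield a globally admissible solution, rather than three separately entropic pieces on overlapping wedges, is the crux. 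To pass from this Lipschitz/BV construction to a general $L^\f$ datum I expect to need the stability lemma proved in section \ref{appendix}.
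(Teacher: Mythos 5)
Your overall architecture coincides with the paper's: the forward direction is extracted from theorem \ref{theorem3.1}, and the backward direction uses the decomposition into $D_1,D_2,D_3$, the reflected-characteristics constructions of subsection \ref{newsection}, lemma \ref{lemma6.2}, the $f=g$ construction of \cite{Sco}, gluing along the separating curves by the Rankine--Hugoniot condition, and the stability lemma \ref{stability} to reach general $L^\f$ data. The genuine gap is in the forward direction, which you dismiss as ``essentially bookkeeping''. Conditions \descref{1pg4}{(1)} and \descref{3pg4}{(3)} do follow directly from the monotonicity, sign and finite-speed properties in theorem \ref{theorem3.1}, but condition \descref{2pg4}{(2)} does not: it asserts that for \emph{every} solution $u$ of \eqref{conlaw-equation} the trace $y(R_1(T)+,T)$ dominates the quantity $\xi_0$. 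Lemma \ref{3.6}, which you cite, only \emph{constructs} $(\tau_0,\xi_0,s_{\xi_0})$ from the data $(R_1,T)$; it says nothing about solutions, so invoking it does not verify \descref{2pg4}{(2)}, and the claim that the reachable set ``is designed precisely so that these extracted quantities obey'' the constraints is circular. What is needed is the separate comparison result, lemma \ref{lemma3.6}: one proves $u\geq \bar{\T}_g$ a.e.\ on $(0,R_1(t))$, compares the curve $R_1(t)$ with $s_{\xi_0}(t)$ through a differential inequality for their speeds, deduces $R_1(t)=0$ on a left neighbourhood of $\tau_0$, and finally uses the Rankine--Hugoniot and interface entropy conditions to contradict $y(R_1(T)+,T)<\xi_0$. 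This inequality is exactly where the earlier arxiv version of the paper had its gap (see the authors' declaration), so it is the crux of the forward implication, not bookkeeping.

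A secondary inaccuracy concerns $D_2$: you claim that, $L^1$-contraction being unavailable, you ``can read off a matching initial datum and solution'' once lemma \ref{2.2} excludes forward interface rarefactions. For a general non-decreasing $y$ this is not enough. The paper's lemma \ref{lemma6.2} discretizes $y$, assembles piecewise-constant solutions from the shock building blocks of lemma \ref{3.3}, proves a uniform bound on the total variation of $g'(u^N_{2,0})$, and passes to the limit via Helly's theorem combined with the \emph{restricted} $L^1$-contractivity \descref{xii.}{(xii)} of theorem \ref{theorem3.1}, which is valid precisely because the approximate solutions have discontinuity sets that are discrete families of Lipschitz curves. So contraction is not wholly unavailable --- the approximation is arranged to stay in the class where \descref{xii.}{(xii)} applies --- and without this compactness step (or an equivalent one) your construction in $D_2$ is incomplete.
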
	

	\subsection{Exact controllability}\label{ec}
	\begin{theorem}\label{theorem1.3}
		Let $(T, R_{1}, R_2, y(\cdot))\in \mathcal{R}(T)$ where $\mathcal{R}(T)$ is defined as in \eqref{def:reachable} and $C_{1}<0<R_{1}<C_{2}$, $B_{1}<0<B_{2}$ be given. Assume that 
		\begin{eqnarray}
		y(C_{1}+)&>&B_{1},\\
		y(C_{2}-)&<&B_{2}
		\end{eqnarray}
		and $u_{1, 0}\in L^{\f}(\R\setminus(B_{1}, B_{2}))$, then there exist a $\tilde{u}_{0}\in L^{\f}(B_{1}, B_{2})$ and the solution $u$ of \eqref{conlaw-equation} with initial data $u_{0}$ satisfying 
		\begin{eqnarray}
		u_{0}(x)=\left\{\begin{array}{lll}
		u_{1, 0}(x)&\mbox{if}&x\notin(B_{1}, B_{2}),\\
		\tilde{u}_{0}(x)&\mbox{if}&x\in(B_{1}, B_{2}).
		\end{array}\right.
		\end{eqnarray}
		Let $(T, R_{1}(T), R_{2}(T), y(\cdot, T))$ be the element in $\mathcal{R}(T)$ corresponds to $u(\cdot, T)$, then 
		\begin{eqnarray}
		R_{i}&=&R_{i}(T) \mbox{ for } i=1, 2,\\
		y(x)&=&y(x, T) \mbox{ for all } x\in(C_{1}, R_{2})\cup(R_{1}, C_{2}).
		\end{eqnarray}
	\end{theorem}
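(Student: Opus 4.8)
The plan is to reduce the exact control problem to the reachable‑set characterization of Theorem~\ref{theorem1.1} and then to \emph{localize} the realizing datum to the control window $(B_1,B_2)$ by inserting high‑potential buffers whose room is guaranteed by the hypotheses $y(C_1+)>B_1$ and $y(C_2-)<B_2$. First, since $(T,R_1,R_2,y(\cdot))\in\mathcal R(T)$, Theorem~\ref{theorem1.1} furnishes a reference datum $\hat u_0\in L^\f(\R)$ and a solution $\hat u$ whose associated quantities (as defined in Theorem~\ref{theorem3.1}) satisfy $R_i(\hat u)=R_i$ and $\hat y(\cdot,T)=y(\cdot)$. Because $C_1<0\le R_2$ and $C_2>R_1\ge 0$, the monotonicity of $y$ together with $y(x)\le 0$ on $(-\f,R_2)$ and $y(x)\ge 0$ on $(R_1,\f)$ forces
\[
y(x)\in[\,y(C_1+),\,y(C_2-)\,]\subset(B_1,B_2)\qquad\text{for all }x\in(C_1,R_2)\cup(R_1,C_2),
\]
so every backward characteristic reaching the window at time $T$ has its foot in the control interval, and the interface $\{x=0\}$ lies in $[y(C_1+),y(C_2-)]$ as well.

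Next I would define the control $\tilde u_0$ on $(B_1,B_2)$ by setting $\tilde u_0:=\hat u_0$ on the matched interval $[y(C_1+),y(C_2-)]$, and $\tilde u_0\equiv -A$ on the left buffer $(B_1,y(C_1+))\subset\{x<0\}$ and $\tilde u_0\equiv +A$ on the right buffer $(y(C_2-),B_2)\subset\{x>0\}$, with $A>0$ large to be chosen; outside $(B_1,B_2)$ we keep the prescribed datum $u_{1,0}$. Writing the potential $v_0(x)=\int_0^x u_0$ and invoking the explicit Hopf--Lax representation of \cite{Kyoto}, the value of $u$ at a window point $(x,T)$ is governed by a minimization of $v_0(\mathrm{foot})$ plus a superlinearly growing transport cost built from the Legendre transforms of $f$ and $g$ along possibly interface‑bent paths. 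Taking $A$ large makes $v_0$ arbitrarily large on both buffers; since $u_{1,0}\in L^\f$ keeps $v_0$ only linearly varying outside $(B_1,B_2)$ while the transport cost is superlinear, every competitor whose foot lies in or beyond the buffers becomes strictly suboptimal. Hence for each $x$ in the window the minimizer coincides with the reference minimizer $y(x)\in[y(C_1+),y(C_2-)]$, where $u_0=\hat u_0$; by the stability lemma of Section~\ref{appendix} this identification is stable, so $y(x,T)=y(x)$ throughout $(C_1,R_2)\cup(R_1,C_2)$.

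It remains to recover $R_i(T)=R_i$, which is governed by the interface dynamics rather than by a single foot. Since $[y(C_1+),y(C_2-)]$ is a neighbourhood of the interface on which $u_0=\hat u_0$, and since the buffers block every exterior path from reaching $\{x=0\}$, the interface trace, the reflected characteristics (Definition~\ref{reflected}), and the special line $\gamma_0(t)=R_1+f'(\bar{\theta}_g)(t-T)$ together with the point $(\tau_0,\xi_0)$ of Lemma~\ref{3.6} are all reproduced exactly by $u$; invoking Lemma~\ref{2.2} to exclude any forward rarefaction emanating from the $t$‑axis guarantees that no new structure leaks into the gap $[R_2,R_1]$. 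Reading off the quantities of Theorem~\ref{theorem3.1} for $u$ then yields $R_1(T)=R_1$ and $R_2(T)=R_2$, which completes the argument.

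The hard part is the non‑interference step in the presence of the interface: an arbitrary $u_{1,0}$ could in principle feed a fast characteristic, or a reflected characteristic, into the window, and one must prove that the buffers genuinely seal off both the window and the interface region from the exterior. The delicate point is that the admissible paths in the explicit formula may reflect at $\{x=0\}$, so the sealing cannot rely on straight‑line finite speed of propagation alone; it is precisely the superlinearity of the Legendre costs, the room created by $y(C_1+)>B_1$ and $y(C_2-)<B_2$, the no‑forward‑rarefaction Lemma~\ref{2.2}, and the quantitative stability lemma of Section~\ref{appendix} that have to be combined to make the blocking rigorous.
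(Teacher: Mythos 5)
Your high-level mechanism---realize the target via Theorem \ref{theorem1.1} and seal the window with large buffer states adjacent to $B_1$ and $B_2$---is the same device the paper uses (there the buffer values are $\lambda_1$ large negative, $\lambda_2$ large positive), but your execution has two genuine gaps. First, the sealing step is only sketched: you assert that for $A$ large every competing path with foot in or beyond the buffers is strictly suboptimal, yet you yourself defer the case of paths that bend or reflect at the interface to ``the hard part'' still to be made rigorous. This is not a cosmetic issue: the interface-sitting cost in the value function is $\min\{f^*(0),g^*(0)\}=-\max\{f(\theta_f),g(\theta_g)\}$, which can be negative, so a quantitative lower bound on the cost of bent and reflected competitors, uniform over all relevant base points, must actually be proved; as written your argument is a plan, not a proof. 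Second, and more seriously, the conclusion $R_i(T)=R_i$ is never derived. The quantities $R_1(T)$ and $R_2(T)$ are defined through the classification of $ch(x,T)$ (straight, bent, or reflected) for \emph{all} $x\in[0,R_1]$, and the gap $[R_2,R_1]$ lies outside the window $(C_1,R_2)\cup(R_1,C_2)$ where your variational identification is carried out. Recovering $R_i(T)$ requires showing that the characteristic sets of the new solution coincide with those of the reference solution on all of $[0,R_1]\times\{T\}$, including the reflected characteristics that define $R_2(T)$; your one-sentence appeal to lemma \ref{2.2} (which only rules out forward rarefactions from the interface) and to lemma \ref{3.6} does not supply this identification.

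For contrast, the paper avoids variational sealing altogether. It modifies the target to $\tilde y$ (equal to $y$ on the window, constant $y(C_1+)$, resp.\ $y(C_2-)$, on the intermediate intervals, and equal to $x$ far out), applies Theorem \ref{theorem1.1} to $(T,R_1,R_2,\tilde y)$ to get a middle solution $\tilde u$ whose extreme characteristics are the straight lines $\gamma_1,\gamma_2$ joining $(y(C_1+),0)$ to $(C_1,T)$ and $(y(C_2-),0)$ to $(C_2,T)$, carrying constant one-sided traces $\tilde u_-$, $\tilde u_+$. The arbitrary exterior datum $u_{1,0}$ is then handled by the free-region lemmas of \cite{Sco}: filling $(B_1,B_1+\delta_1)$ with $\lambda_1$ and $(B_2-\delta_2,B_2)$ with $\lambda_2$ produces solutions $u_2,u_3$ whose traces on the other sides of $\gamma_1,\gamma_2$ are again $\tilde u_-$, $\tilde u_+$, so gluing $u_2,\tilde u,u_3$ along these curves yields an entropy solution that coincides with $\tilde u$ on the whole region between them. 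All structural quantities ($R_1(T)$, $R_2(T)$, $y(\cdot,T)$, reflected characteristics included) are then inherited from $\tilde u$ in one stroke. If you wish to keep your route, you must (i) prove the quantitative sealing lemma for straight, bent, and reflected paths uniformly on a space-time region containing the interface and $[0,R_1]\times\{T\}$, and (ii) run the characteristic-set identification on that whole region, not only on the window.
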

	
	\subsection{Optimal control}\label{osection}
	Let $u_0\in L^{\f}(\R)$ and $u$ be the solution of \eqref{conlaw-equation} with initial data $u_0$. Let $T>0$. Let $k\in L^{\f}(\R)$ and $c>0$ such that
	\begin{equation*}
	k(x)=\left\{\begin{array}{rl}
	\theta_g&\mbox{ if }x<-c,\\
	\theta_f&\mbox{ if }x>c.
	\end{array}\right.
	\end{equation*}
	Define 
	\[K(x)=\left\{\begin{array}{rl}
	f^{\p}(k(x))&\mbox{ if }x>0,\\
	g^{\p}(k(x))&\mbox{ if }x<0.
	\end{array}\right.\]
	Note that $K\in L^{\f}(\R)$ and support of $K\subset[-c,c]$. 
	Denote $g_+^{-1}$ to be the inverse of $g$ on $[g(\T_g),\f)$. Let $u_0\in L^\f(\R)$ and $u$ be the corresponding solution of (\ref{conlaw-equation}).
	Define the cost functional $J:L^{\f}(\R)\rr\R$  by
	%
	\begin{eqnarray}
	\begin{array}{lllll}\label{cost1}
	J(u_0)&=& \displaystyle \int\limits_{-\infty}^{0}|g^\prime(u(x,T)) - K(x)|^2dx +
	+ \displaystyle\int\limits_{0}^{R_2(T)}|g^\prime\circ g_+^{-1}\circ f(u(x,T))- K(x)|^2dx\\
	&+&\displaystyle\int\limits_{R_2(T)}^{\infty}|f^\prime(u(x,T)) -
	K(x)|^2dx.
	\end{array}
	\end{eqnarray}
	Then we have the following result on optimal control problem.

	\begin{theorem}\label{theorem1.2} 
		Let  $\mathcal{A}$ be the admissible class of functions defined by  
		\begin{eqnarray*}\begin{array}{lll}
				\mathcal{A}&=&\left\{u_{0}\in L^{\f}(\R): \exists M>0 \mbox{ such that } 
				u_{0}(x)=\left\{\begin{array}{lll}
					\theta_{g}&\mbox{if}&x<-M,\\
					\theta_{f}&\mbox{if}&x>M.
				\end{array}\right.\right\},\\
			\end{array}
		\end{eqnarray*} 
		Then there exists a $u_0\in \mathcal{A}$ such that 
		\begin{eqnarray}
		J(u_0)=\min_{w_0\in \mathcal{A}}{J}(w_0).\label{optimalcontrol}
		\end{eqnarray}  \end{theorem}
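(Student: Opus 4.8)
The plan is to run the direct method of the calculus of variations, with the essential caveat that compactness must be extracted at the level of the monotone characteristic-foot data $y(\cdot,T)$ rather than for the solutions $u(\cdot,T)$ themselves. This is dictated by the difficulties stressed in the introduction: since $\TV(u(\cdot,T))$ may blow up and no $L^1$-contraction is available for \eqref{conlaw-equation}, a minimizing sequence $u^n(\cdot,T)$ carries no usable strong compactness, whereas the associated functions $y^n(\cdot,T)$ are non-decreasing and hence compact in the sense of Helly. The cost $J$ is written through $f'(u)$, $g'(u)$ precisely so that it sees only these well-behaved quantities.

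First I would set $m=\inf_{w_0\in\mathcal{A}}J(w_0)\ge 0$ and fix a minimizing sequence $u_0^n\in\mathcal{A}$, with solutions $u^n$ and associated reachable data $(T,R_1^n,R_2^n,y^n(\cdot,T))\in\mathcal{R}(T)$ supplied by Theorem~\ref{theorem1.1}. The next step is to recast $J$ as a functional $\widetilde J$ of $(R_1,R_2,y(\cdot))$. Using the explicit Lax--Oleinik formula of \cite{Kyoto}, the backward characteristic through $(x,T)$ has foot $y(x,T)$, so on the pure region $x>R_1$ one has $f'(u(x,T))=(x-y(x,T))/T$, on $x<0$ one has $g'(u(x,T))=(x-y(x,T))/T$, and on the interface-crossing region $0<x<R_2$ the composite $g'\circ g_+^{-1}\circ f(u(x,T))$ is, via the Rankine--Hugoniot matching \eqref{RH-condition}, a fixed monotone function of $y(x,T)$. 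On the remaining transition strip $R_2<x<R_1$ (the trace at $t=T$ of the reflected and interface domains $D_1\cup D_2$) the solution value is not free but pinned by the interface dynamics, so that its contribution to $J$ is a function of $(R_1,R_2)$ alone. In this way $\widetilde J$ splits into a part driven by the monotone datum $y$ and a part depending continuously on $(R_1,R_2)$.

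Second, I would derive the a priori bounds. Because $J(u_0^n)\le m+1$ and $K$ is supported in $[-c,c]$ with $K\equiv 0$ outside, the achieved speed field must vanish at infinity; this confines $R_1^n,R_2^n$ to a bounded set and upgrades bound (iii) to a uniform estimate $\sup_n\sup_x|x-y^n(x,T)|<\infty$. Passing to a subsequence gives $R_i^n\to R_i$, and Helly's theorem gives $y^n(\cdot,T)\to y(\cdot)$ pointwise a.e.\ with $y$ non-decreasing. The decisive and hardest point is to prove the limit $(T,R_1,R_2,y(\cdot))$ again lies in $\mathcal{R}(T)$: monotonicity and the sign constraints (i)--(ii) are stable under a.e.\ limits, (iii) holds by the uniform bound, and condition \descref{2pg4}{(2)} requires that the point $(\tau_0,\xi_0)$ produced by Lemma~\ref{3.6} with $T_1=T(R_1^n)$ depend continuously on $R_1^n$ and that the one-sided trace $y^n(R_1^n+)$ be lower semicontinuous along the moving endpoint. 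This continuity and semicontinuity is exactly the role of the stability lemma proved in the appendix.

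Finally, lower semicontinuity of $\widetilde J$ follows from the a.e.\ convergence of the integrands (each being a fixed monotone function of $y^n\to y$) together with the continuous $(R_1,R_2)$-dependence of the transition-strip term, so Fatou's lemma yields $\widetilde J(R_1,R_2,y)\le\liminf_n\widetilde J(R_1^n,R_2^n,y^n)=m$. Since the limit lies in $\mathcal{R}(T)$ we also have $\widetilde J\ge m$, hence equality. The forward direction of Theorem~\ref{theorem1.1} then produces an actual $u_0\in L^\infty(\R)$ and solution $u$ realizing the optimal data, and the uniform bound (iii) together with the decay of the speed field at infinity forces $u_0\equiv\theta_g$ for $x$ very negative and $u_0\equiv\theta_f$ for $x$ large, i.e.\ $u_0\in\mathcal{A}$. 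I expect the main obstacle to be the closedness of $\mathcal{R}(T)$ under Helly convergence --- specifically the continuity of the reflected-characteristic construction of Lemma~\ref{3.6} inside condition \descref{2pg4}{(2)} and the stable behaviour of the interface-crossing term --- rather than the soft Fatou argument, which becomes routine once $J$ is expressed through $y(\cdot,T)$.
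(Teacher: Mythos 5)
Your overall strategy is the same as the paper's: recast $J$ as a functional $\tilde J$ of the reachable data $(T,R_1,R_2,y(\cdot))$ via the Lax--Oleinik formula and Lemma \ref{lemma3.1}, extract a priori bounds on $R_1$ and on $y$ from the boundedness of the cost (using the monotonicity of $y$ to turn $L^2$ smallness of $(x-y(x))/T$ into pointwise bounds), apply Helly's theorem to a minimizing sequence, check that the limit stays in $\mathcal{R}(T)$ (continuity of $(\tau_0,\xi_0,s_{\xi_0})$ from Lemma \ref{3.6}), and finally invoke Theorem \ref{theorem1.1} to reconstruct an initial datum. This matches the paper's Steps 1, 2 and 4.

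However, there is a genuine gap at the last step, where you claim that ``the uniform bound (iii) together with the decay of the speed field at infinity forces $u_0\equiv\theta_g$ for $x$ very negative and $u_0\equiv\theta_f$ for $x$ large.'' This does not follow. What the cost bound plus monotonicity gives for the Helly limit is only $x-y(x)\in L^2$ together with $y(x)-x\rr 0$ as $|x|\rr\f$; it does \emph{not} give $y(x)=x$ outside a compact set. (Each $y^n$ equals $x$ outside $[-M_n,M_n]$, but $M_n$ may grow with $n$, and the property ``$y(x)=x$ outside a compact set'' is not closed under pointwise convergence.) Consequently the datum $u_0$ reconstructed from the limit by Theorem \ref{theorem1.1} satisfies only $u_0(x)\rr\theta_f$ as $x\rr\f$ and $u_0(x)\rr\theta_g$ as $x\rr-\f$, which is strictly weaker than membership in $\mathcal{A}$, and with it the inequality $\tilde J(\alpha_0)\ge m=\inf_{\mathcal{A}}J$ also collapses, since it relies on the reconstructed datum being admissible. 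The paper closes exactly this hole with its Step 3: given any $\alpha$ in the sublevel set $\{\tilde{\mathcal{J}}\le 2M_1\}$, one \emph{truncates} $y$, replacing it by $\tilde y$ with $\tilde y(x)=x$ outside a fixed compact set $[-M_3,M_3]$ (with $M_3$ determined only by $M_1$, $T$, $c$), and shows, using the monotonicity of $y$, that this truncation does not increase $\tilde{\mathcal{J}}$. Minimizing over this truncated class $\tilde{\mathcal{A}}_2$ makes the compact-support property survive the Helly limit, so that the reconstructed minimizer genuinely lies in $\mathcal{A}$. Without this truncation lemma (or a substitute for it), your argument produces a minimizer of a relaxed problem, not of \eqref{optimalcontrol}.
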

	We prove the Theorem \ref{theorem1.2} via an explicit construction and hence can be adopted to numerical computation.
	

	\begin{remark}
We can obtain the similar results, when one of the flux is concave and another one is convex in the equation (\ref{conlaw-equation}).  One can use the explicit formulas as in \cite{Jde} and similar analysis in the present paper.     
\end{remark}
	\section{Preliminaries}\label{PRE}
	In order to make the paper self contained we recall  some results, definitions and notations  from \cite{Kyoto}.

	\begin{definition}\label{CC} \textbf{Control curve}:
		(See figure \ref{fig-1} for illustration) We say $\gamma\in C([0,t],\mathbb{R})$ is a control curve if it verifies the following conditions:
		\begin{enumerate}
			\item $\gamma$ is piece-wise affine and it can have at most 3 affine segments such that each affine part lies completely in either $[0,\f)\times[0,\f)$ or $(-\f,0]\times[0,\f)$.
			
			\item If $\ga$ has three affine segments $\{\gamma_i;\,i=1,2,3\}$ defined as $\gamma_i=\gamma|_{[t_{i-1},t_{i}]}$ where $0=t_0\leq t_1\leq t_2\leq t_3=t$,
			then $\ga_2(s)=0$ for all $s\in(t_1,t_2)$ and for all $s\in (t_0,t_1)\cup (t_2,t)$, either $\ga_1(s), \ga_3(s)$ are in $(-\f,0)$ or in $(0,\f)$. 
			
		\end{enumerate}
		
		Let $0 <t,\ x\in \mathbb{R} $ and let $c(x,t)$ be the set of all control curves such that $\gamma(t)=x$. The set $c(x,t)$ can be partitioned into three categories:
		\begin{enumerate}
			\item $c_0(x,t)\subset c(x,t)$ consists of control curves $\gamma$ which have only one affine segment and satisfies $x\gamma(s)\geq0$ for $s\in[0,t]$.
			
			\item $c_r(x,t)\subset c(x,t)$ consists of control curves $\gamma$ which have exactly 3 affine segments and satisfies $x\gamma(s)\geq0$ for $s\in[0,t]$. Here we say $c_r(x,t)$ to be the set of all reflected control curves.  
			
			\item $c_b(x,t)= c(x,t)\setminus \{c_0(x,t) \cup c_r(x,t)\}$.
		\end{enumerate}
	\end{definition} 
	
	\begin{figure}
		\centering
		\begin{tikzpicture}
\draw (-2,0)--(2,0);\draw (3,0)--(7,0); \draw (8,0)--(12,0);
\draw (0,0)--(0,3);\draw (5,0)--(5,3); \draw (10,0)--(10,3);
\draw (-2,4.5)--(2,4.5);\draw (3,4.5)--(7,4.5); \draw (8,4.5)--(12,4.5);
\draw (0,4.5)--(0,7.5);\draw (5,4.5)--(5,7.5); \draw (10,4.5)--(10,7.5);
\draw (1,4.5)--(2,7.5);\draw (-1,0)--(-2,3);
\draw (4.5,4.5)--(5,6);\draw (5,6)--(7,7.5);
\draw (11,4.5)--(10,5.5);\draw[very thick](10,5.5)--(10,6.5);\draw (10,6.5)--(12,7.5);
\draw (5.5,0)--(5,1.5);\draw (5,1.5)--(3,3);
\draw (9,0)--(10,1);\draw[very thick](10,1)--(10,2);\draw (10,2)--(8,3);
\draw (0,4)node{\small$(a)$};\draw (5,4)node{\small$(b)$};\draw (10,4)node{\small$(c)$};
\draw (0,-.5)node{\small$(d)$};\draw (5,-.5)node{\small$(e)$};\draw (10,-.5)node{\small$(f)$};
\draw (-1,-.25)node{\small$\gamma(0)$};
\draw (-1,0)node{\tiny $\bullet$};
\draw (1,4.25)node{\small$\gamma(0)$};
\draw (1,4.5)node{\tiny$\bullet$};
\draw (11.1,4.25)node{\small$\gamma(0)$};
\draw (11,4.5)node{\tiny$\bullet$};
\draw (4.4,4.25)node{\small$\gamma(0)$};
\draw (4.5,4.5)node{\tiny$\bullet$};
\draw (5.6,-.25)node{\small$\gamma(0)$};
\draw (5.5,0)node{\tiny $\bullet$};
\draw (9,-.25)node{\small$\gamma(0)$};
\draw (9,0)node{\tiny $\bullet$};
\draw (8,3.25)node{\small$(x,t)$};
\draw (8,3)node{\tiny$\bullet$};
\draw (3,3.25)node{\small$(x,t)$};
\draw (3,3)node{\tiny$\bullet$};
\draw (12,7.75)node{\small$(x,t)$};
\draw (12,7.5)node{\tiny$\bullet$};
\draw (7,7.75) node{\small$(x,t)$};
\draw (7,7.5)node{\tiny$\bullet$};
\draw (-2,3.25) node{\small$(x,t)$};
\draw (-2,3)node{\tiny$\bullet$};
\draw (2,7.75) node{\small$(x,t)$};
\draw (2,7.5)node{\tiny$\bullet$};
		\end{tikzpicture}
		\caption{Figures (a), (b) and (c) is representing control curves for the case $x>0$, figures (d), (e) and (f) is for $x<0$. Note that the figures  (c), (f) represents the reflected control curves.}\label{fig-1}
	\end{figure}
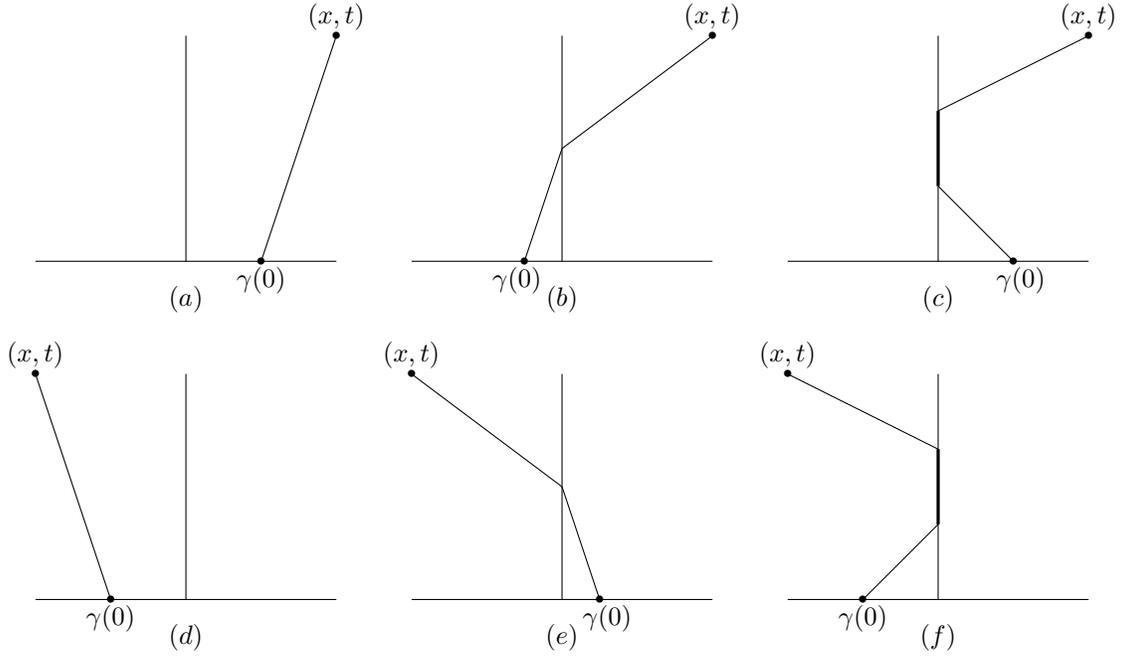
	\begin{definition}\textbf{Convex dual}: Let $f$ be a $C^1$ convex function with superlinear growth, that is $f$ satisfies $\lim\limits_{|s|\rightarrow \infty} \frac{f(s)}{|s|}=\infty.$ Then we denote the convex dual of $f$ by $f^*$ and defined by 
		$f^*(p)=\sup_{q}\{pq-f(q)\}.$ Observe that $(f^*)'=(f')^{-1}$. 
		
	\end{definition}
	
	\begin{definition}\label{CF}\textbf{Cost function}: Let  $f^*, g^*$ be the respective convex duals of the fluxes $f$ and $g$. 
		Let us assume that $v_0: \mathbb{R}\rightarrow \mathbb{R}$ be an  uniformly Lipschitz continuous function. 
		Let $(x,t)\in\mathbb{R}\times \mathbb{R}_+$,  $\gamma\in c(x,t)$.
		The cost functional $\Gamma$ associated to $v_0$ is defined by 
		\begin{equation*}
		\begin{array}{llll}
		\Gamma_{v_0,\gamma}(x,t)&=v_0(\gamma(0))+\int\limits_{\{\theta\in[0,t]\ :\ \gamma(\theta)>0 \}} f^*(\dot{\gamma}) d\theta
		+\int\limits_{\{\theta\in[0,t]\ :\ \gamma(\theta)<0 \}} g^*(\dot{\gamma}) d\theta \\
		&+ \mbox{meas}\{\theta\in[0,t]\
		:\ \gamma(\theta)=0 \}\mbox{min}\{f^*(0),g^*(0)\}.
		\end{array}
		\end{equation*}
		Then we define the value function   $v:\mathbb{R}\times \mathbb{R}_+ \rightarrow \mathbb{R}$ by 
		$$v(x,t)=\inf\limits_{\gamma\in c(x,t)}\{ \Gamma_{v_0, \gamma}(x,t) \}.$$
	\end{definition}
	\begin{definition}\label{reflected} Let us define by 
		$ch(x,t)=\{\gamma \ : \ \Gamma_{v_0,\gamma}(x,t)=v(x,t)\},$ the set of \textbf{characteristics curves}. We say an element of the set  $ch(x,t)\cap c_r(x,t)$ to  be a \textbf{reflected characteristics curves}.
	\end{definition}	
	Let $t>0$, define (see figure \ref{fig-2} for further illustration)
	\begin{eqnarray*}
		R_{1}(t)&=&\inf\{x\ge 0: ch(x,t)\subset c_{0}(x,t)\}.\\
		R_{2}(t)&=&\begin{cases*}
			\inf\{x:0\le x\le R_{1}(t),ch(x,t)\cap c_{r}(x,t)\not=\emptyset\},\\
			R_{1}(t)  \mbox{ if the above set is empty}.
		\end{cases*}\\
		L_{1}(t)&=&\sup\{x\le 0: ch(x,t)\subset c_{0}(x,t)\}.\\
		L_{2}(t)&=&\begin{cases*}
			\sup\{x:L_{1}(t)\le x\le 0,ch(x,t)\cap c_{r}(x,t)\not=\emptyset\},\\
			L_{1}(t)  \mbox{ if the above set is empty}.\\
		\end{cases*}\\
		y(x,t)&=&\inf\{\gamma(0):\gamma\in ch(x,t), x\in (-\f,L_{1}(t))\cup(R_{1}(t),\f)\}.
	\end{eqnarray*}
	Let $0\le x\le R_{1}(t)$, define 
	\begin{eqnarray*}
		t_{+}(x,t)&=&\inf\{t_{1}:\gamma (t_{1})=0,\gamma(\theta)>0, \forall \ \theta\in(t_{1}, t), \gamma\in ch(x,t) \}.\\
		t_{+}(R_{i}(t)-,t)&=&\lim\limits_{x\uparrow R_{i}(t)}t_{+}(x,t),\quad i=1,2.
	\end{eqnarray*}
	For $t_{+}(R_{2}(t)-,t)\le s\le t$, define
	\begin{equation*}
	y_{-,0}(s)=\inf\{\gamma(0): \gamma\in ch(0,s)\}.
	\end{equation*}
	Let $L_{1}(t)\le x\le 0$, define
	\begin{eqnarray*}
		t_{-}(x,t)&=&\inf\{t_{1}:\gamma (t_{1})=0,\gamma(\theta)<0, \forall\ \theta\in(t_{1},t), \gamma\in ch(x,t) \}.\\
		t_{-}(L_{i}(t)+,t)&=&\lim\limits_{x\downarrow L_{i}(t)}t_{-}(x,t),\ i=1,2.
	\end{eqnarray*}
	For $t_{-}(L_{2}(t)+,t)\le s\le t$, define
	\begin{equation*}
	y_{+,0}(s)=\inf\{\gamma(0): \gamma\in ch(0,s)\}.
	\end{equation*}
	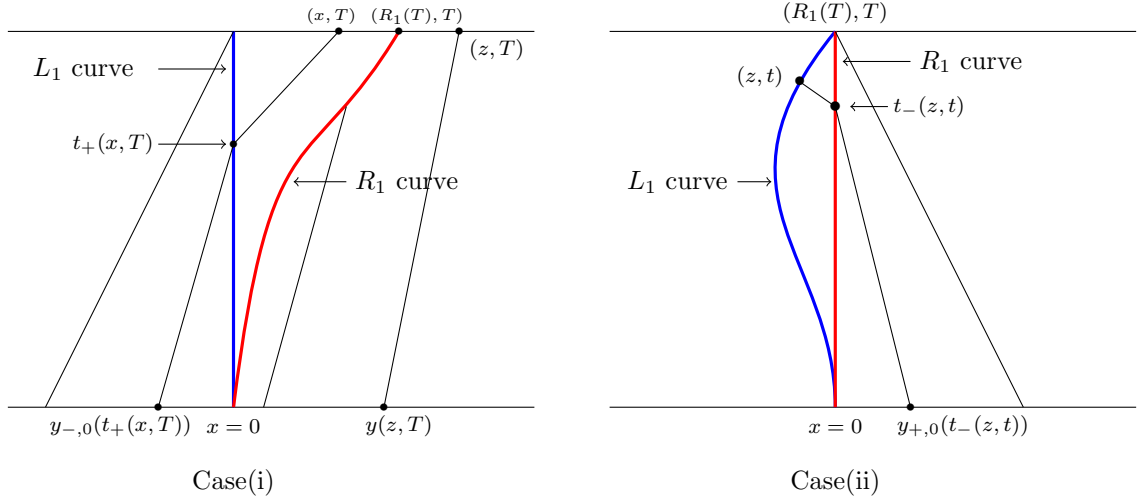
\begin{figure}
		\centering
		\begin{tikzpicture}
		\draw (-3,0)--(4,0);\draw (0,0)--(0,5);\draw (-3,5)--(4,5);
		\draw (5,0)--(12,0);\draw (8,0)--(8,5);\draw (5,5)--(12,5);
		\draw [very thick,blue](0,5)--(0,0);
		\draw [very thick,red] (0,0) .. controls (.5,4) and (1,3) .. (2.2,5);
		\draw (-1,0)--(0,3.5);\draw(0,3.5)--(1.4,5);
		\draw (-2.5,0)--(0,5);\draw (.4,0)--(1.5,4);\draw (2,0)--(3,5);
		\draw [very thick,blue] (8,0) .. controls (8,2) and (6.2,2.8) .. (8,5);
		\draw [very thick,red](8,0)--(8,5);
		\draw (9,0)--(8,4);
		\draw (8,4)--(7.5,4.35);\draw (10.5,0)--(8,5);
		\draw (8,5.25)node{\scriptsize$(R_{1}(T),T)$};\draw (9.7,-.25)node{\scriptsize $y_{+,0}(t_{-}(z,t))$};
		\draw (9,0)node{\tiny$\bullet$};
		\draw [<-](8.2,4)--(8.7,4);
		\draw (9.25,4)node{\scriptsize $t_{-}(z,t)$};
		\draw (8,4)node{\footnotesize  $\bullet$};
		\draw (0,-.25)node{\scriptsize $x=0$};
		\draw (8,-.25)node{\scriptsize$x=0$};
		\draw [<-](.8,3)--(1.5,3);
		\draw (2.3,3)node{\small$R_{1}$ curve};
		\draw [<-](-.1,4.5)--(-1,4.5);
		\draw (-2,4.5)node{\small$L_{1}$ curve};
		\draw [<-](8.1,4.6)--(9,4.6);
		\draw (9.8,4.6)node{\small$R_{1}$ curve};
		\draw [<-](7.15,3)--(6.7,3);
		\draw (5.9,3)node{\small$L_{1}$ curve};
		\draw (7,4.4)node{\scriptsize $(z,t)$};
		\draw (7.53,4.34)node{\scriptsize $\bullet$};
		\draw (3,5)node{\tiny$\bullet$};
		\draw (2.2,5)node{\tiny$\bullet$};
		\draw (1.4,5)node{\tiny$\bullet$};\draw (1.3,5.2)node{\tiny$(x,T)$};\draw (2.44,5.2)node{\tiny$(R_{1}(T),T)$};\draw (3.5,4.75)node{\scriptsize $(z,T)$};
		\draw (2.2,-.25)node{\scriptsize$y(z,T)$};
		\draw (2,0)node{\tiny$\bullet$};
		\draw (-1,0)node{\tiny$\bullet$};
		\draw (-1.5,-.25)node{\scriptsize$y_{-,0}(t_{+}(x,T))$};
		\draw (0,3.5)node{\tiny$\bullet$};
		\draw (-1.6,3.5)node{\scriptsize$t_{+}(x,T)$};
		\draw [<-](-.1,3.5)--(-1,3.5);
		\draw (0,-1) node{\small Case(i)};\draw (8,-1) node{\small Case(ii)};
		\end{tikzpicture}
			\caption{Illustrations of $R_1(t), L_1(t), t_\pm, y_{\pm,0}, y$. In case (i), $L_1(t)=0$ for all $t\in (0,T)$ and for the  case (ii), $R_1(t)=0$ for all $t\in (0,T)$.}\label{fig-2}
	\end{figure}

	\begin{definition}
		Let $(X,d)$ be a metric space and $A_{k},A$ are subsets of $X$ for each $k\ge 1$. We say that 
		$	\lim\limits_{k\rr\f}A_{k}\subset A$
		if for every sequence $\{ x_{k}\}$ with $x_{k}\in A_{k}$, there exists a subsequence $\{x_{k_{i}}\}$ converges to some $x\in A$. \end{definition}
	
	Let $f,g$ be in $C^1(\R)$ and strictly convex function with $F(x,u)=H(x)f(u)+(1-H(x))g(u)$. Let $\{f_k\}$, $\{g_k\}$in $C^1(\R)$ be sequences of strictly convex functions and $F_k(x,u)=H(x)f_k(u) + (1-H(x))g_k(u)$ such that 
	\begin{eqnarray*}
		&\lim\limits_{k\rr \f} (f_k(u), g_k(u))=(f(u),g(u)) \ \mbox{in}\ C^1_{\mbox{loc}}(\R),\\
		& \lim\limits_{|p|\rr \f} \left(\inf\limits_{k} \frac{f_k(p)}{|p|}, \inf\limits_{k} \frac{g_k(p)}{|p|}  \right) =(\f,\f).
	\end{eqnarray*}
	Let $u_0\in L^\f(\R)$ and $\{u_{0,k}\}\subset L^\f(\R)$ be such that 
	$$\lim\limits_{k\rr \f}u_{0,k}=u_0,\ \mbox{in } L^\f(\R)-\mbox{weak}*.$$
	Let $v_0(x)=\int\limits_0^x u_0(\T)d\T$ and $v_{0,k}(x)=\int\limits_0^x u_{0,k}(\T)d\T$
	be the associated primitives of $u_0$ and $u_{0,k}$. Notice that in \cite{Kyoto}, it was assumed that $u_0\in C(\R)$. Now it is easy to show that results in \cite{Kyoto} continue to hold for $u_0\in L^\f(\R)$. In order to prove this result, we need the following stability lemma.  
	\begin{lemma}[Stability lemma]\label{stability}
		With the data as above, let $v, v_k$ be the corresponding value functions associated to the fluxes $F$ and $F_k$ and initial data $v_0$ and $v_{0,k}$ respectively. Let $ch(x,t)$ and $ch_k(x,t)$ be the respective characteristic sets. Let $u=\frac{\partial v}{\partial x}$ and $u_k=\frac{\partial v_k}{\partial x}$, then 
		\begin{enumerate}[label=\arabic*.]
			\item\label{lemma-stab-1} $\lim\limits_{k\rr \f} v_k=v \ \mbox{in}\ C^1_{\mbox{loc}}(\R\times [0,\f))$,
			\item\label{lemma-stab-2} $\lim\limits_{k\rr \f} u_k=u \ \mbox{in}\ \mathcal{D}'(\R\times [0,\f))$,
			\item\label{lemma-stab-3} $\lim\limits_{k\rr \f} ch_k(x,t)\subset ch(x,t)$.
		\end{enumerate}
	\end{lemma}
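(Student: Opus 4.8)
The plan is to reduce everything to two deterministic facts — stability of the convex duals under $C^1_{\loc}$ flux convergence and a uniform velocity bound for optimal control curves — and then push both the value functions and their minimizing curves to the limit by compactness. First I would collect the a priori convergences. Weak-$*$ convergence $u_{0,k}\rightharpoonup u_0$ forces $\sup_k\|u_{0,k}\|_{L^\f}\le L<\f$, so the primitives $v_{0,k}$ are equi-Lipschitz with constant $L$ and normalized by $v_{0,k}(0)=0$; testing the convergence against $\mathbf 1_{[0,x]}$ gives pointwise convergence $v_{0,k}\to v_0$, which the equi-Lipschitz bound upgrades, via Arzel\`a--Ascoli, to locally uniform convergence. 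Next, since $f_k\to f$ and $g_k\to g$ in $C^1_{\loc}$ with all functions strictly convex, the increasing derivatives $f_k',g_k'$ converge locally uniformly, hence so do their inverses on compact subsets of the ranges; as $(f_k^*)'=(f_k')^{-1}$ and likewise for $g$, this yields $f_k^*\to f^*$ and $g_k^*\to g^*$ in $C^1_{\loc}$, the uniform superlinear growth hypothesis making the coercivity of the duals uniform in $k$. I write $\Gamma^{(k)}$ for the cost functional of Definition~\ref{CF} built from $v_{0,k},f_k^*,g_k^*$.

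The decisive estimate is a uniform velocity bound. By the explicit formula (cf.\ the remark preceding this lemma), the equi-Lipschitz data yield value functions whose spatial slopes $u_k=\partial_x v_k$ satisfy $\|u_k(\cdot,t)\|_{L^\f}\le L$ uniformly in $k$ and $t$ on compact time intervals; since the velocity along an optimal affine segment equals $f_k'$ or $g_k'$ evaluated at the (constant) value of $u_k$ carried by that segment, and $f_k'([-L,L])\cup g_k'([-L,L])$ is bounded uniformly in $k$ by the $C^1_{\loc}$ convergence, every minimizing control curve satisfies $|\dot\gamma|\le M=M(L,f,g)$ independent of $k$. Restricting the infimum defining $v_k$ to this equi-Lipschitz, compact family and using that on it $\Gamma^{(k)}_{v_{0,k},\gamma}\to\Gamma_{v_0,\gamma}$ uniformly — the foot term by the locally uniform convergence $v_{0,k}\to v_0$, each of the at most three affine pieces by $f_k^*\to f^*$, $g_k^*\to g^*$ with slopes and breakpoints ranging in compact sets — I pass to the infimum to get $v_k(x,t)\to v(x,t)$, and the common spatial Lipschitz bound promotes this to locally uniform convergence, which is item~\ref{lemma-stab-1}. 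Item~\ref{lemma-stab-2} is then immediate, since locally uniform convergence implies convergence in $\mathcal D'$ and $\partial_x$ is continuous on $\mathcal D'$, whence $u_k=\partial_x v_k\to\partial_x v=u$ in $\mathcal D'(\R\times[0,\f))$.

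For item~\ref{lemma-stab-3}, fix $(x,t)$ and take any $\gamma_k\in ch_k(x,t)$. The velocity bound makes $\{\gamma_k\}$ equi-Lipschitz with uniformly bounded feet, so after passing to a subsequence $\gamma_k\to\gamma$ uniformly for some $\gamma\in c(x,t)$; one checks that the control-curve class of Definition~\ref{CC} is closed under uniform limits, the only subtlety being that a piece in an open half-plane may degenerate onto the interface so that the type may drop (e.g.\ $c_r\to c_b$), while the ordering $t_0\le t_1\le t_2\le t_3$ and the closed half-plane constraints survive. It remains to identify $\gamma$ as optimal, i.e.\ to prove
\begin{equation*}
\Gamma_{v_0,\gamma}(x,t)=\lim_k\Gamma^{(k)}_{v_{0,k},\gamma_k}(x,t)=\lim_k v_k(x,t)=v(x,t),
\end{equation*}
the middle equality being optimality of $\gamma_k$ for $v_k$ and the last being item~\ref{lemma-stab-1}. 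The crux — and the step I expect to be the main obstacle — is the first equality, i.e.\ continuity of the cost along the \emph{moving} sequence $\gamma_k\to\gamma$: one must show $v_{0,k}(\gamma_k(0))\to v_0(\gamma(0))$ (from locally uniform convergence of $v_{0,k}$ together with continuity of $v_0$) and, more delicately, that the region integrals together with the interface term $\mathrm{meas}\{\theta:\gamma_k(\theta)=0\}\min\{f_k^*(0),g_k^*(0)\}$ pass to the limit without losing or creating mass at $\{x=0\}$. Here the rigidity of control curves is essential: for such $\gamma_k$ the contact set is exactly the (possibly empty) middle time-interval $[t_1^k,t_2^k]$, whose endpoints converge, so $\mathrm{meas}\{\gamma_k=0\}\to\mathrm{meas}\{\gamma=0\}$ and the three pieces of $\Gamma^{(k)}$ converge piece by piece. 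Once the first equality holds, $\gamma$ minimizes $\Gamma_{v_0,\cdot}(x,t)$, hence $\gamma\in ch(x,t)$, which is precisely the assertion $\lim_k ch_k(x,t)\subset ch(x,t)$.
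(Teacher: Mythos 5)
Your overall architecture is the same as the paper's: convergence of the convex duals in $C^1_{\mathrm{loc}}$, equi-Lipschitz primitives $v_{0,k}\rightarrow v_0$, a uniform velocity bound for minimizing control curves, compactness of minimizers, and identification of limit curves through the minimality inequality. Your route to items 1--2 (uniform convergence of the functionals $\Gamma^{(k)}\rightarrow\Gamma$ over a velocity-bounded compact family of curves, then passing to the infimum) is a legitimate reorganization of what the paper does (Arzel\`a--Ascoli on $\{v_k\}$ followed by identification of the limit via the same characteristic-curve claim). However, there is a concrete error at precisely the step you single out as the crux, and a secondary one in the velocity bound.

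The error: the contact-set measure $\mathrm{meas}\{\theta:\gamma(\theta)=0\}$ is \emph{not} continuous under uniform convergence of control curves, so the claim ``$\mathrm{meas}\{\gamma_k=0\}\rightarrow\mathrm{meas}\{\gamma=0\}$ and the three pieces converge piece by piece'' fails. A side piece lying in an open half-plane can collapse onto the interface: take $\gamma_k$ crossing from a foot $y_k<0$, $y_k\uparrow 0$, to $(0,t_1)$ with $t_1>0$ fixed, then along the interface to $t_2$, then to $(x,t)$, $x>0$. Each $\gamma_k$ has contact measure $t_2-t_1$, while the uniform limit $\gamma$ has contact measure $t_2$; moreover the collapsed stretch contributes $t_1\,g^*(0)$ to $\lim_k\Gamma^{(k)}_{v_{0,k},\gamma_k}$ but is charged $t_1\min\{f^*(0),g^*(0)\}$ by $\Gamma_{v_0,\gamma}$, and these differ whenever $f^*(0)\neq g^*(0)$. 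So the equality $\lim_k\Gamma^{(k)}_{v_{0,k},\gamma_k}=\Gamma_{v_0,\gamma}$ cannot be established by your piecewise argument. What does hold --- and is all you need --- is \emph{lower} semicontinuity, $\Gamma_{v_0,\gamma}\leq\liminf_k\Gamma^{(k)}_{v_{0,k},\gamma_k}$, exactly because the interface term uses the minimum $\min\{f^*(0),g^*(0)\}$, which is $\leq$ whichever of $f^*(0),g^*(0)$ a collapsing piece produces. Combining this with $\Gamma^{(k)}_{v_{0,k},\gamma_k}=v_k(x,t)\rightarrow v(x,t)$ (your item 1) and the trivial bound $v(x,t)\leq\Gamma_{v_0,\gamma}(x,t)$ forces equality throughout, hence $\gamma\in ch(x,t)$; with that substitution your proof of item 3 closes, and it is in substance the paper's argument made precise. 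Secondary point: your justification of the velocity bound via $\|u_k(\cdot,t)\|_{L^\infty}\leq L$ is not correct for discontinuous flux --- the solution takes values (such as $\bar{\theta}_{g_k}$, $\bar{\bar{\theta}}_{g_k}$) outside the range of the data --- although these are bounded uniformly in $k$ by the $C^1_{\mathrm{loc}}$ flux convergence, so the bound survives; the cleaner (and the paper's) source for this estimate is the uniform superlinearity hypothesis, i.e.\ Lemma 4.2 of \cite{Kyoto} applied uniformly in $k$.
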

	Proof of this lemma is given in the appendix (section \ref{appendix}).
	%

\begin{theorem}[\cite{Kyoto}]\label{theorem3.1}
	Let $u_0\in L^{\f}(\R)$ and $v$ be the corresponding value function defined in definition \ref{CF}. Then $u(x,t)=\frac{\partial v}{\partial x}(x,t)$ exists for $t>0$, a.e. $x\in R$ and is a solution to \eqref{conlaw-equation}. Furthermore there exist Lipschitz continuous curves $R_i(t), L_i(t)$, for $i=1,2$ such that for each $t>0$ a.e. $x\in\R$, we have
	\begin{description}[font=\normalfont]
	\descitem{1.}{t1item1} No two characteristics intersects in the region $\{(x,t):x\not=0, t>0\}$, i.e., if $\gamma_{i}\in ch(x_{i},t_{i}), i=1,2$, then if for some $\theta\in(0,min(t_{1},t_{2}))$, $\gamma_{1}(\theta)\not=0, \gamma_{2}(\theta)\not=0$, then $\gamma_{1}(\theta)\not=\gamma_{2}(\theta)$, provided $\gamma_{1}$ and $\gamma_{2}$ are two different characteristic curves.
			 	\descitem{2.}{t1item6} Let $T>0$, then one of the following holds:
		 	\subitem(i) If $R_{1}(T)>0$, then $L_{1}(T)=0$ and for all $t\in(t_{+}(R_{1}(T)-,T),T), R_{1}(t)>0.$
		 	\subitem(ii) If $L_{1}(T)<0$, then $R_{1}(T)=0$ and for all $t\in(t_{-}(L_{1}(T)+,T),T), L_{1}(t)<0.$
		 	\subitem(iii) $R_{1}(T)=L_{1}(T)=0$.
		 	\descitem{3.}{t1item3} The following properties are true:
		 	
		 	\begin{description}[font=\normalfont]
	\descitem{(i)}{i.} 
	If $f^{*}(0)\ge g^{*}(0)$ (equivalently $f(\T_f)\leq g(\T_g)$), then $L_{1}(t)=0$ and 
		if $f^{*}(0)\le g^{*}(0)$ (equivalently $f(\T_f)\geq g(\T_g)$), then $R_{1}(t)=0$.
		\descitem{(ii)}{ii.} $x\mapsto y(x,t)$ is a non decreasing function and $x\mapsto t_{+}(x,t)$ is a non increasing function on the domain of definitions.
		\descitem{(iii)}{iii.} For $R_{2}(t)<x<R_{1}(t)$, $\frac{x}{t-t_{+}(x,t)}>0$ and for a.e. $x$,
		\begin{equation*}
		g(\theta_{g})=f\circ f^{*'}\left(\frac{x}{t-t_{+}(x,t)}\right).
		\end{equation*}
		\descitem{(iv)}{iv.} $s\mapsto y_{-,0}(s)$ is non increasing function.
		\descitem{(v)}{v.} $x\mapsto t_{-}(x,t)$ is non decreasing function.
		\descitem{(vi)}{vi.} For $L_{1}(t)<x<L_{2}(t)$, $\frac{x}{t-t_{-}(x,t)}<0$ and for a.e. $x$,
		\begin{equation*}
		f(\theta_{f})=g\circ g^{*'}\left(\frac{x}{t-t_{-}(x,t)}\right).
		\end{equation*}
		\descitem{(vii)}{vii.} $s\mapsto y_{+,0}(s)$ is non decreasing function.
		\descitem{(viii)}{viii.} $u(0+,t)$, $u(0-,t)$ exist and RH  condition holds, i.e., $f(u(0+,t))=g(u(0-,t))$ for a.e. $t>0$,\\
		Interface entropy condition: Let $\mathcal{L}^{1}$- denotes the one dimensional Lebesgue measure, then 
		\begin{equation}
	\mathcal{L}^{1}\{t:f'(u(0+,t))>0,  g'(u(0-,t))<0\}=0.
		\end{equation} 
		\descitem{(ix)}{ix.} The entropy solution $u$ is explicitly given by the following Lax-Oleinik type formula, for $t>0$, a.e., $x\in\R$,
		\begin{eqnarray}
		u(x,t)=\left\{\begin{array}{llll}
		f^{*'}\bigg(\frac{x-y(x,t)}{t}\bigg) &\mbox{ if }& x>R_{1}(t),\\
		f^{*'}\bigg(\frac{x}{t-t_{+}(x,t)}\bigg) &\mbox{ if }& 0<x<R_{1}(t),\\
		g^{*'}\bigg(\frac{x-y(x,t)}{t}\bigg) &\mbox{ if }& x<L_{1}(t),\\
		g^{*'}\bigg(\frac{x}{t-t_{-}(x,t)}\bigg) &\mbox{ if }& L_{1}(t)<x<0.
		\end{array}\right.			 	\end{eqnarray}
		\descitem{(x)}{x.} For a.e., $x\in(0, R_{2}(T))$,
		\begin{equation*}
	 \frac{x}{t-t_{+}(x,t)}=f'(u(x,t))=f'(u(0+,t_{+}(x,t))), g'(u(0-,t_{+}(x,t)))=-\frac{y_{-,0}(t_{+}(x,t))}{t_{+}(x,t)}.
		\end{equation*}
		\descitem{(xi)}{xi.} For a.e., $x\in(L_{2}(T),0)$,
		\begin{equation*}
		\frac{x}{t-t_{-}(x,t)}=g'(u(x,t))=g'(u(0-,t_{-}(x,t))), f'(u(0+,t_{-}(x,t)))=-\frac{y_{+,0}(t_{-}(x,t))}{t_{-}(x,t)}.
		\end{equation*}
		\descitem{(xii)}{xii.} $L^{1}$- Contractivity: Let $u_{0}$, $v_{0}\in L^{\f}(\R)$ and $u, v$ be the solution of \eqref{conlaw-equation} with corresponding initial data $u_{0}$, $v_{0}$ respectively. Assume that the set of discontinuity of $u$ and $v$ are discrete set of Lipschitz curves. Then
		\begin{equation*}
		\int\limits_{a}^{b}|u(x,t)-v(x,t)|dx\le \int\limits_{a-Mt}^{b+Mt}|u_{0}(x)-v_{0}(x)|dx,
		\end{equation*}
		  where with $M_{1}=\max(||u_{0}||_{\f}, ||v_{0}||_{\f})$
		  \begin{equation*}
		  M=\max\left\{ \frac{|f(a)-f(b)|}{|a-b|}, : a\not=b, a, b\in(-M_{1}, M_{1})\right\}.
		  \end{equation*}
		  	\end{description}
	\end{description}
\end{theorem}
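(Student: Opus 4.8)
The plan is to derive every assertion from the variational (Lax--Oleinik) representation of the value function $v$ in \cref{CF}, together with the fact, recalled just before the theorem, that $v$ is uniformly Lipschitz so that $u=\partial v/\partial x\in L^{\f}$. First I would record the dynamic programming principle for the cost $\Gamma_{v_0,\gamma}$: cutting an admissible curve at an intermediate time $0<s<t$ and using additivity of the running cost gives $v(x,t)=\inf_\gamma\{v(\gamma(s),s)+(\text{cost on }[s,t])\}$, whence $v$ is the unique viscosity solution of the Hamilton--Jacobi problem with the discontinuous Hamiltonian built from $f,g$, and $u=v_x$ is the entropy solution of \eqref{conlaw-equation}. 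Existence of minimizers, i.e.\ $ch(x,t)\neq\emptyset$, follows from the superlinear growth of $f^*,g^*$ (coercivity) together with the uniform Lipschitz bound, which confines the admissible slopes to a compact set; strict convexity of $f^*,g^*$ and Jensen's inequality then force each optimal curve to be affine on every maximal sub-interval lying in one half-plane, matching the structure of \cref{CC}.

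Next I would prove the non-crossing statement \descref{t1item1}{1.} by the standard splicing argument for convex Hamilton--Jacobi equations: if two optimal curves met at an interior point off the interface, interchanging their tails would not raise the total cost, and strict convexity of $f^*$ (or $g^*$) would make the broken curve strictly suboptimal unless the two agreed there. From non-crossing follow all the monotonicities \descref{ii.}{(ii)}, \descref{iv.}{(iv)}, \descref{v.}{(v)}, \descref{vii.}{(vii)} for the backward feet $y,y_{\pm,0}$ and the hitting times $t_\pm$. I would then define $R_i(t),L_i(t)$ exactly as in the displays preceding the theorem and obtain their Lipschitz continuity in $t$ from the uniform Lipschitz bound on $v$ and the stability \cref{stability}. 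The trichotomy \descref{t1item6}{2.} and item \descref{i.}{(i)} are consequences of the standing hypothesis $f(\T_f)\le g(\T_g)$, equivalently $f^*(0)\ge g^*(0)$, which makes $\min\{f^*(0),g^*(0)\}=g^*(0)$: dwelling on the axis is then charged by $g^*(0)$, so reflected behaviour can occur only on the right and $L_1(t)\equiv0$.

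The explicit formula \descref{ix.}{(ix)} is read off by minimizing the cost over the three curve types of \cref{CC}: on a single affine segment Jensen gives optimal slope $(x-y(x,t))/t$ and value $f^{*\prime}$ (resp.\ $g^{*\prime}$), while for interface and reflected curves one optimizes the crossing time to obtain the quotients $x/(t-t_+(x,t))$ and $x/(t-t_-(x,t))$. The identities \descref{iii.}{(iii)}, \descref{vi.}{(vi)}, \descref{x.}{(x)}, \descref{xi.}{(xi)} are the first-order conditions for the optimal crossing time together with the fact that along a reflected segment the flux value is pinned at $g(\T_g)$ (resp.\ $f(\T_f)$). For \descref{viii.}{(viii)}, continuity of $v$ across $\{x=0\}$ and the two one-sided Hamilton--Jacobi relations give $f(u(0+,t))=-v_t(0,t)=g(u(0-,t))$, i.e.\ the Rankine--Hugoniot relation \eqref{RH-condition}; the interface-entropy condition \eqref{entropy-condition} follows because the $\min\{f^*(0),g^*(0)\}$ term precludes optimal characteristics simultaneously leaving the interface into both half-planes.

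The hard part will be the $L^1$-contraction \descref{xii.}{(xii)}, the more so since genuine $L^1$-contraction for \eqref{conlaw-equation} is unavailable (difficulty~(1) of the introduction); I would prove only the stated version under the hypothesis that the discontinuity sets of $u$ and $v$ are discrete Lipschitz curves. The idea is to estimate $\frac{d}{dt}\int_a^b|u-v|$ by tracing characteristics: away from shocks and the interface both families are affine with speed at most $M$, so the region where $u\neq v$ spreads at speed $\le M$; across each Lipschitz shock the Rankine--Hugoniot jump relation and the Kru\v{z}kov inequalities \eqref{Kruzkov2} make the boundary flux non-positive; and the finitely many interface crossings are controlled by the matching \eqref{RH-condition}. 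Summing these local contributions over the discrete curve structure and passing to the limit yields the cone-of-dependence bound with the stated $M$. Ensuring that reflected characteristics never create a positive interface contribution is the delicate technical point.
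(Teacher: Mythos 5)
The first thing to say is that the paper does not prove this theorem: it is imported wholesale from \cite{Kyoto} (hence the citation in the theorem header). What the paper actually supplies is (a) a dictionary translating the notation $y_{\pm}$, $t_{\pm}$, $y_{\pm,0}$ of \cite{Kyoto} into the present one, pointing to the specific lemmas and equations of \cite{Kyoto} that yield items (i)--(xii), and (b) the stability Lemma \ref{stability}, proved in the appendix, whose sole purpose is to upgrade the results of \cite{Kyoto} --- established there for $u_0\in C(\mathbb{R})$ --- to $u_0\in L^{\infty}(\mathbb{R})$. Your proposal instead re-derives everything from the variational formula, which amounts to reconstructing the proof of \cite{Kyoto} itself. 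The skeleton you describe (dynamic programming, the splicing argument for non-crossing, the monotonicities of the feet and crossing times, optimization over the three curve classes for the Lax--Oleinik formula, Hamilton--Jacobi traces for Rankine--Hugoniot) is indeed how \cite{Kyoto} proceeds, so the route is viable in principle --- though your appeal to ``uniqueness of viscosity solutions'' for a Hamiltonian discontinuous in $x$ is an overclaim that neither \cite{Kyoto} nor the paper needs. The real issue is that, once you insist on reproving rather than citing, the sketch must survive at full detail, and there it has two genuine gaps.

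First, your derivation of (iii), (vi), (x), (xi) via ``first-order conditions for the optimal crossing time'' differentiates $\Gamma_{v_0,\gamma}$ in the crossing parameters, and this requires $v_0\in C^1(\mathbb{R})$, i.e.\ continuous $u_0$. For $u_0\in L^{\infty}(\mathbb{R})$ the primitive $v_0$ is only Lipschitz, and the Euler--Lagrange identities cannot be written, since the foot of the optimal curve may land in the null set where $v_0'$ fails to exist. This is precisely where Lemma \ref{stability} is indispensable: one approximates by continuous data $u_{0,k}$, writes the first-order conditions for $v_{0,k}$, and passes to the limit using $\lim_{k}ch_k(x,t)\subset ch(x,t)$ together with a.e.-uniqueness of minimizers --- exactly the scheme run in the proofs of Lemmas \ref{2.2} and \ref{lemma3.1}. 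You invoke the stability lemma only for the Lipschitz continuity of $R_i$, $L_i$, not at the step where the whole extension from \cite{Kyoto} to $L^{\infty}$ data actually lives.

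Second, the $L^1$-contraction (xii). You propose to control the interface by ``the matching \eqref{RH-condition}'', but Rankine--Hugoniot alone does not fix the sign of the interface term: if both solutions satisfy $f(u(0+,t))=g(u(0-,t))$ and $f(v(0+,t))=g(v(0-,t))$, the jump of the Kru\v{z}kov entropy flux across $x=0$ reduces to
\begin{equation*}
\bigl(\mathrm{sgn}(u(0+,t)-v(0+,t))-\mathrm{sgn}(u(0-,t)-v(0-,t))\bigr)\,\bigl(g(u(0-,t))-g(v(0-,t))\bigr),
\end{equation*}
and controlling its sign requires the interface entropy condition \eqref{entropy-condition}, not just flux matching. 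Indeed, were flux matching enough, the Kru\v{z}kov inequalities \eqref{Kruzkov2} would already give uniqueness, contradicting the paper's own remark in the introduction that they do not. Together with the reflected-characteristics accounting that you yourself flag as unresolved, item (xii) is not established by your sketch; the paper sidesteps all of this by citing \cite[Theorem 2.2]{Kyoto}.
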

We remark that there is a change in the notation used here and in \cite{Kyoto} and is as follows: 

See equations (4.13), (4.10), page 51 in \cite{Kyoto}:
\begin{eqnarray}
y(x,t):=\left\{\begin{array}{llll}
y_{+}(x,t)&\mbox{ if } &x>R_{1}(t),\\
y_{-}(x,t)&\mbox{ if }& x<L_{1}(t).
\end{array}\right.
\end{eqnarray}
\begin{eqnarray}
t_{+}(x,t)&:=&y_{+}(x,t) \mbox{ if } 0<x<R_{1}(t),\\
t_{-}(x,t)&:=&y_{-}(x,t)\mbox{ if } L_{1}(t)<x<0.
\end{eqnarray}
See equation (4.25), page 54 in \cite{Kyoto}
\begin{eqnarray}
y_{-,0}\left(t-\frac{x}{q_{1}}\right)&:=&y_{-}\left(0, t-\frac{x}{q_{1}}\right)\mbox{ if } 0<x<R_{2}(t),\\
y_{+,0}\left(t-\frac{x}{q_{1}}\right)&:=&y_{+}\left(0, t-\frac{x}{q_{1}}\right)\mbox{ if } L_{2}(t)<x<0.
\end{eqnarray}
Now comes to the identification of \descref{i.}{(i)} to \descref{xii.}{(xii)} in \cite{Kyoto}.  \descref{i.}{(i)} follows from (i) of \cite[Lemma 4.9, page 51]{Kyoto}, \descref{ii.}{(ii)}, \descref{iv.}{(iv)}, \descref{v.}{(v)}, \descref{vii.}{(vii)} and \descref{viii.}{(viii)} to follows from the non intersecting proved in \cite[Lemma 4.8, 4.9, page 50 and page 51]{Kyoto}. \descref{iii.}{(iii)} follows from (4.20) to (4.25) of page 53, (4.26) and last 4 lines of page 54 and first 3 lines of page 55 in \cite{Kyoto}. \descref{viii.}{(viii)} follows from \cite[Lemma 4.10 in page 55]{Kyoto}. \descref{ix.}{(ix)} follows from \cite[Theorem 3.2]{Kyoto}. \descref{x.}{(x)} and \descref{xii.}{(xii)} follows from (4.10) page 55 and (ix), (xii) in \cite[Theorem 2.2, page 30]{Kyoto}.

\section{Some technical lemmas}\label{technical}
	First observe that $\eta\in ch(x,t)$, then $\eta$ is a curve consists of atmost three line segments and denote $\dot{\eta}(\theta)=(p_{1},p_{2},p_{3})$, where $p_{i}$ is the slope of $i$th line segment. By abuse of notations we denote $p_{i}=\emptyset$ if the $i_{th}$ line segment does not exist. Note that if $0<x<R_{2}(t)$ or $L_{2}(t)<x<0$, then for any $\eta\in ch(x,t)$, $\dot{\eta}(\theta)=(p_{1},\emptyset,p_{3})$, $p_{1}>0,p_{3}>0$ if $x>0$ and $p_{1}<0, p_{3}<0$ if $x<0$.
	\begin{definition}
		For $0<x<R_{2}(t)$ or $L_{2}(t)<x<0$, define 
		\begin{eqnarray*}
			ch_{+}(x,t)=\{
				p_{1}: \exists \ \eta\in ch(x,t) \mbox{ such that } \dot{\eta}(\theta)=(p_{1},\emptyset, p_{3}) \mbox{ for some } p_{3}	\}.\\
				ch_{-}(x,t)=\{
				p_{3}: \exists \  \eta\in ch(x,t) \mbox{ such that } \dot{\eta}(\theta)=(p_{1},\emptyset, p_{3}) \mbox{ for some } p_{1}	\}.
			\end{eqnarray*}
	\end{definition}
\begin{lemma}\label{lemma2.1}
	Let $0<x_{1}<x_{2}<R_{2}(t)$ or $L_{2}(t)<x_{1}<x_{2}<0$, $p_{1}\in ch_{+}(x_1,t), q_{1}\in ch_{+}(x_{2},t), p_{3}\in ch_{-}(x_1,t), q_{3}\in ch_{-}(x_{2},t)$, then
	\begin{eqnarray*}
	\begin{cases*}
		\frac{x_{1}}{p_{1}}\le \frac{x_{2}}{q_{1}} \mbox{ if } x_{1}>0,\\
		\frac{x_{2}}{q_{1}}\le \frac{x_{1}}{p_{1}} \mbox{ if } x_{2}<0,
	\end{cases*}
\end{eqnarray*}
\begin{equation*}
-p_{3}\bigg(t-\frac{x_{1}}{p_{1}}\bigg)\le -q_{3}\bigg(t-\frac{x_{2}}{p_{2}}\bigg).
\end{equation*}
\end{lemma}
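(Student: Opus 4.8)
The plan is to read both inequalities as monotonicity statements about where a characteristic meets the interface and where its foot sits, and to derive them from the fact that distinct characteristics meet only on the interface $\{x=0\}$ (the non-intersection property, item~(1) of Theorem~\ref{theorem3.1}). Fix $0<x_{1}<x_{2}<R_{2}(t)$; the case $L_{2}(t)<x_{1}<x_{2}<0$ is symmetric under $x\mapsto -x$ and $f\leftrightarrow g$. For $0<x<R_{2}(t)$ every $\eta\in ch(x,t)$ has the form $\dot\eta=(p_{1},\emptyset,p_{3})$, with positive foot-to-top slopes, so it starts at a negative foot, crosses $\{x=0\}$ exactly once and then reaches $(x,t)$; here $p_{1}\in ch_{+}(x,t)$ is the slope on the side of $x$ and $p_{3}\in ch_{-}(x,t)$ the slope on the opposite side. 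If the crossing time is $\tau$ then $\frac{x}{p_{1}}=t-\tau$ is the time spent on the positive side, and the foot equals $\eta(0)=-p_{3}\tau=-p_{3}\big(t-\frac{x}{p_{1}}\big)$. Thus the first inequality asserts that the crossing time is non-increasing in $x$, and the second that the foot is non-decreasing in $x$.

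First I would prove $\frac{x_{1}}{p_{1}}\le\frac{x_{2}}{q_{1}}$, equivalently $\tau_{1}\ge\tau_{2}$, where $\tau_{1},\tau_{2}$ are the crossing times of curves $\eta_{1}\in ch(x_{1},t)$, $\eta_{2}\in ch(x_{2},t)$ realising $p_{1},q_{1}$. Suppose instead $\tau_{1}<\tau_{2}$. Then $\eta_{1}(\tau_{2})>0=\eta_{2}(\tau_{2})$, while at the top $\eta_{1}(t)=x_{1}<x_{2}=\eta_{2}(t)$; both curves stay in $\{x>0\}$ on $[\tau_{2},t]$, so by the intermediate value theorem they coincide at some $\theta^{*}\in(\tau_{2},t)$ with $\eta_{1}(\theta^{*})=\eta_{2}(\theta^{*})>0$, contradicting non-intersection away from the interface. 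Hence $\tau_{1}\ge\tau_{2}$. When the two curves happen to share a crossing time the inequality holds with equality ($\frac{x_{1}}{p_{1}}=\frac{x_{2}}{q_{1}}=t-\tau$), so the argument correctly excludes only the strict reversal.

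Next, assuming for the moment that $p_{1},p_{3}$ come from one curve at $x_{1}$ and $q_{1},q_{3}$ from one curve at $x_{2}$, I would prove the foot inequality $-p_{3}\tau_{1}\le -q_{3}\tau_{2}$, i.e. $\eta_{1}(0)\le\eta_{2}(0)$, by the mirror argument on the negative side using $\tau_{1}\ge\tau_{2}$. If the feet were reversed, $\eta_{1}(0)>\eta_{2}(0)$, then at $\theta=0$ we have $\eta_{1}>\eta_{2}$, whereas at $\theta=\tau_{2}\le\tau_{1}$ we have $\eta_{1}(\tau_{2})<0=\eta_{2}(\tau_{2})$ (strictly when $\tau_{2}<\tau_{1}$); both curves lie in $\{x<0\}$ on $(0,\tau_{2})$, so they must coincide there at a point with $x<0$, again contradicting non-intersection. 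This settles the foot inequality whenever $\tau_{2}<\tau_{1}$.

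The main obstacle is the coupling between the two slopes and the degenerate shared-crossing-time case. In $-p_{3}\big(t-\frac{x_{1}}{p_{1}}\big)$ the slopes $p_{1}\in ch_{+}(x_{1},t)$ and $p_{3}\in ch_{-}(x_{1},t)$ need not belong to a common curve, so I would first show that $ch_{+}(x,t)$ and $ch_{-}(x,t)$ are intervals and that their extreme values are attained along common characteristics; this convexity follows from the convexity of the cost in Definition~\ref{CF} and reduces matters to the same-curve situation above. The remaining delicate point is $\tau_{1}=\tau_{2}=\tau$, where the two curves share the interface point $(0,\tau)$ and the argument of the previous paragraph produces no crossing in $\{x\neq 0\}$. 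Here the first inequality is an equality automatically, and for the foot inequality I would use that both feet minimise the single negative-side cost $y\mapsto v_{0}(y)+\tau\,g^{*}\big(\tfrac{-y}{\tau}\big)$ associated with $(0,\tau)$; the strict convexity of $g^{*}$ together with the positive-side ordering $x_{1}<x_{2}$ then forces $\eta_{1}(0)\le\eta_{2}(0)$, completing the proof and, by symmetry, the case $L_{2}(t)<x_{1}<x_{2}<0$.
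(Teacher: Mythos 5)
Your treatment of the generic situation is the same as the paper's: there too the two inequalities come from applying the non-intersection property (part \descref{t1item1}{(1)} of theorem \ref{theorem3.1}) to two piecewise-affine characteristics through $(x_1,t)$ and $(x_2,t)$, and your IVT argument on each side of the interface is exactly that. The problem is in the two patches you add for the cases you correctly identify as the remaining work, namely (a) the slopes $p_1\in ch_+(x_1,t)$ and $p_3\in ch_-(x_1,t)$ need not come from a common characteristic, and (b) the crossing times may coincide. The paper's resolution of (a) is the idea your proposal lacks: given \emph{any} $p_1\in ch_+(x_1,t)$ and \emph{any} $p_3\in ch_-(x_1,t)$, it forms the single two-segment curve $\gamma_1$ with these slopes (whose crossing time $t-x_1/p_1$ is dictated by $p_1$) and invokes the dynamic programming principle to conclude $\gamma_1\in ch(x_1,t)$; both inequalities then follow from one application of non-intersection to the glued curves $\gamma_1,\gamma_2$.

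Both of your substitutes for this step would fail. For (a), the claim that $ch_+(x,t)$ and $ch_-(x,t)$ are intervals whose extreme values are attained along common characteristics ``by convexity of the cost in Definition \ref{CF}'' is unjustified and in general false: the functional $\Gamma_{v_0,\gamma}$ contains the term $v_0(\gamma(0))$ with $v_0$ merely Lipschitz, so it is not convex in $\gamma$, and minimizer sets are typically finite --- at a point of a shock, $ch(x,t)$, hence $ch_\pm(x,t)$, is a two-point set, not an interval. For (b), when $\tau_1=\tau_2=\tau$ the two feet both minimize the one-dimensional function $y\mapsto v_0(y)+\tau\, g^{*}\left(-y/\tau\right)$, which again is not convex because of $v_0$ and may have several minimizers; moreover the ordering $x_1<x_2$ never enters this minimization, and since the crossing time is shared, the negative-side segments can be exchanged between the two characteristics (this exchange preserves optimality precisely because the crossing point $(0,\tau)$ is the same), so any minimizing foot can sit under either $x_1$ or $x_2$. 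Hence strict convexity of $g^{*}$ cannot force $\eta_1(0)\le\eta_2(0)$, and this step is a non sequitur. Note also that the configuration in (b) is exactly a forward rarefaction from the interface, which is excluded only later by lemma \ref{2.2} --- whose proof uses the present lemma --- so you cannot appeal to its impossibility here without circularity.
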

\begin{proof}
	Let $0<x_{1}<x_{2}<R_{2}(t)$ and 
	\begin{eqnarray*}
	\gamma_{1}(\theta)=\left\{\begin{array}{llll}
		x_{1}+p_{1}(\theta-t) &\mbox{ if }& t-\frac{x_{1}}{p_{1}}\le \theta<t,\\
		p_{3}\bigg(\theta-t+\frac{x_{1}}{p_{1}}\bigg) &\mbox{ if }& 0\le \T\le t-\frac{x_{1}}{p_{1}}.
\end{array}\right. \end{eqnarray*}

	\begin{eqnarray*}
	\gamma_{2}(\theta)=\left\{\begin{array}{llll}
		x_{2}+q_{1}(\theta-t) &\mbox{ if }& t-\frac{x_{2}}{p_{2}}\le \theta<t,\\
		q_{3}\bigg(\theta-t+\frac{x_{2}}{q_{1}}\bigg) &\mbox{ if }& 0\le 
		\T\le t-\frac{x_{2}}{p_{2}}.
\end{array}\right. \end{eqnarray*}
Then by dynamic programming principle, $\gamma_{1}(\theta)\in ch(x_{1},t)$, $\gamma_{2}(\theta)\in ch(x_{2},t)$. Hence from \descref{t1item1}{(1)} of theorem \ref{theorem3.1} we have $\gamma_{1}$ and $\gamma_{2}$ do not intersect in $x\not=0$. Hence if $\theta_{1}, \theta_{2}$ be such that $\gamma_{i}(\theta_{i})=0,$ then for $x_{1}>0$, $\theta_{2}\le\theta_{1}$ and $\gamma_1(0)\le\gamma_{2}(0).$ That is 
	\begin{eqnarray*}
	\begin{cases*}
		0\le t-\frac{x_{2}}{q_{1}}\le t-\frac{x_{1}}{p_{1}}, \\
		-p_{3}\bigg(t-\frac{x_{1}}{p_{1}}\bigg)\le -q_{3}\bigg(t-\frac{x_{2}}{q_{1}}\bigg).
	\end{cases*}
\end{eqnarray*}
If $x_{2}<0$, then $\theta_{1}\le\theta_{2}$ and $\gamma_{1}(0)\le\gamma_{2}(0)$. This prove the lemma.
\end{proof}
\subsection{No rarefaction from  the interface}\label{NR}
One of the key factor of this article is that there exists no rarefaction from the interface for the solution of \eqref{conlaw-equation}.
This is useful for backward construction (section \ref{backward}).
	\begin{definition}[Forward rarefaction from the interface]
		We say that the solution $u$ admits a forward rarefaction on the interface if $\exists \  0<x_{1}<x_{2}<R_{2}(t)$ or $L_{2}(t)<x_{1}<x_{2}<0$, $t_{0}\in (0,t)$ and $p_{1}\in ch_{+}(x_{1},t), q_{1}\in ch_{+}(x_{2},t)$ such that $t_{0}=t-x_{1}/p_{1}=t-x_{2}/q_{1}.$
	\end{definition}
\begin{lemma}\label{2.2}
	There does not exists forward rarefaction from the interface (see figure \ref{fig-3}).
\end{lemma}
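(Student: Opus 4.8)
The plan is to argue by contradiction, assuming a forward rarefaction exists, and to derive a violation of the non-intersection property \descref{t1item1}{(1)} of Theorem \ref{theorem3.1} together with the monotonicity established in Lemma \ref{lemma2.1}. Suppose, say in the right half-plane, that there exist $0<x_{1}<x_{2}<R_{2}(t)$, a time $t_{0}\in(0,t)$, and slopes $p_{1}\in ch_{+}(x_{1},t)$, $q_{1}\in ch_{+}(x_{2},t)$ with $t_{0}=t-x_{1}/p_{1}=t-x_{2}/q_{1}$. The geometric meaning is that the two initial (upper) segments of the characteristics through $(x_{1},t)$ and $(x_{2},t)$ both hit the interface $\{x=0\}$ at exactly the same time $t_{0}$. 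The case $L_{2}(t)<x_{1}<x_{2}<0$ is symmetric and handled identically with $f,g$ interchanged.

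\textbf{Key steps.} First I would write down the two characteristic curves $\gamma_{1}\in ch(x_{1},t)$ and $\gamma_{2}\in ch(x_{2},t)$ explicitly, exactly as in the proof of Lemma \ref{lemma2.1}: each consists of an upper segment with slope $p_{1}$ (resp.\ $q_{1}$) reaching the origin at time $t_{0}$, followed by a lower segment with slope in $ch_{-}$. By the dynamic programming principle these are genuine characteristics, so \descref{t1item1}{(1)} of Theorem \ref{theorem3.1} forbids them from crossing away from $\{x=0\}$. Since both upper segments terminate at the single interface point $(0,t_{0})$ while ending at distinct points $x_{1}<x_{2}$ at time $t$, they are two distinct line segments emanating from a common point $(0,t_{0})$; for $t_{0}<\theta<t$ they satisfy $\gamma_{i}(\theta)=\frac{x_{i}}{t-t_{0}}(\theta-t_{0})>0$, so both lie in the open right half-plane and, having a common endpoint but different slopes $x_{1}/(t-t_{0})\ne x_{2}/(t-t_{0})$, they coincide only at $\theta=t_{0}$. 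This is the desired contradiction: two distinct characteristics meeting at a point with $x\ne 0$ (indeed at $(0,t_{0})$ they share a point, and immediately after they are in $x>0$ emanating from the same location), violating the non-intersection statement.

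\textbf{Main obstacle.} The delicate point is that the shared point $(0,t_{0})$ lies \emph{on} the interface $\{x=0\}$, where \descref{t1item1}{(1)} permits characteristics to meet; so the contradiction cannot come from the meeting point itself but must come from the configuration just after it. I would make this rigorous by noting that the slopes must in fact be equal: applying Lemma \ref{lemma2.1} with the common value $t-x_{1}/p_{1}=t-x_{2}/q_{1}=t_{0}$ forces $x_{1}/p_{1}=x_{2}/q_{1}$, and combined with $x_{1}<x_{2}$ this gives $p_{1}<q_{1}$, hence $x_{1}/(t-t_{0})<x_{2}/(t-t_{0})$ are two \emph{different} slopes. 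Thus for $\theta$ slightly above $t_{0}$ the two curves are distinct characteristic curves lying strictly in $\{x>0\}$ and sharing the limit point $(0,t_{0})$; choosing any $\theta^{*}\in(t_{0},t)$ and comparing with a third characteristic through an intermediate $x\in(x_{1},x_{2})$ — or directly invoking that distinct characteristics emanating from one point immediately separate yet cannot re-approach without intersecting — contradicts \descref{t1item1}{(1)}. The technical care needed is to phrase the non-intersection so that two characteristics issuing from the same interface point but immediately entering $\{x\ne0\}$ with distinct directions already constitute a forbidden intersection in the limiting sense, which is exactly what the structure of $ch(x,t)$ in Theorem \ref{theorem3.1} rules out.
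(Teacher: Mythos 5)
Your proposal contains a genuine gap at its central step: the rarefaction fan you construct does \emph{not} contradict the non-intersection property \descref{t1item1}{(1)} of theorem \ref{theorem3.1}. That property only forbids two distinct characteristics from meeting at a point where \emph{both} are away from $\{x=0\}$. The two upper segments in your configuration emanate from the common interface point $(0,t_{0})$ with distinct slopes $x_{1}/(t-t_{0})<x_{2}/(t-t_{0})$, so for every $\theta\in(t_{0},t)$ they sit at the \emph{different} positions $\frac{x_{i}}{t-t_{0}}(\theta-t_{0})$, $i=1,2$; they share only the interface point itself, where meeting is explicitly permitted. There is no ``forbidden intersection in the limiting sense'' encoded anywhere in theorem \ref{theorem3.1}: a fan opening from an interface point is perfectly compatible with non-intersection (just as rarefaction fans at $t=0$ are compatible with it in the classical $f=g$ theory), and inserting a third characteristic through an intermediate $x\in(x_{1},x_{2})$ only shows, by squeezing, that its upper segment must also pass through $(0,t_{0})$ — which is a structural fact used to \emph{start} the real proof, not a contradiction. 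So no contradiction can be extracted from \descref{t1item1}{(1)} and lemma \ref{lemma2.1} alone; the obstruction to interface rarefactions is not geometric but comes from the flux coupling at $x=0$.

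What the paper actually does after the squeezing step (which you have) is analytic: it approximates $u_{0}$ by continuous data, invokes the stability lemma \ref{stability}, and uses the first-order optimality conditions $\frac{\partial}{\partial p_{1}}\Gamma=\frac{\partial}{\partial p_{3}}\Gamma=0$ for minimizing curves to derive the interface relation $f\lbr f^{*'}(p_{1}(x))\rbr=g\lbr g^{*'}(p_{3})\rbr$ linking the slope of the upper segment (governed by $f$ in $x>0$) to the slope of the lower segment (governed by $g$ in $x<0$); this is \eqref{eq4.6}. It then uses the \emph{second} conclusion of lemma \ref{lemma2.1} — the ordering of the feet, $-p_{3}\lbr t-\frac{x}{p_{1}(x)}\rbr\le-q_{3}\lbr t-\frac{y}{p_{1}(y)}\rbr$ — which, since both upper segments reach the interface at the same time $t_{0}$, gives $q_{3}\le p_{3}$. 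Monotonicity of $g$ on $[\T_g,\f)$ and of $f$ on $[\T_f,\f)$ then converts this, through the interface relation, into $p_{1}(x)\ge p_{1}(y)$ for $x<y$; but the fan forces $p_{1}(x)=x/(t-t_{0})$ to be strictly increasing, and this is the contradiction. In your write-up the interface relation never appears and lemma \ref{lemma2.1} is used only for the trivial observation that the fan slopes differ, so the argument cannot close; the missing ingredients are precisely the Rankine--Hugoniot-type coupling across the interface and the feet-ordering applied to the lower segments.
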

\begin{figure}
	\centering
	\begin{tikzpicture}
	\draw (-5,0)--(7,0);
	\draw (0,0)--(0,5.5);
	\draw (-5,5.5)--(7,5.5);
	\draw [dashed](-5,3)--(7,3);
	\draw (-1,0)--(0,2);\draw (0,2)--(1.5,3);
	\draw (0,3)--(4.5,5.5);\draw (0,3)--(3,5.5);
	\draw[dashed](0,3)--(3.3,5.5);
	\draw[dashed](0,3)--(3.6,5.5);
	\draw[dashed](0,3)--(3.9,5.5);
	\draw[dashed](0,3)--(4.2,5.5);
	\draw (-2.25,0)--(0,4);\draw (0,4)--(2,5.5);
	\draw[very thick,blue](0,5.5)--(0,0);
	\draw [very thick,red](2,3) .. controls (4,4) and (4,3) .. (6,5.5);
	\draw (7.5,0) node{\small$t=0$};
	\draw (7.5,5.5) node{\small$t=T$};
	\draw (1.5,5.75) node{\scriptsize $(x_{0,k},T)$};
	\draw (3,5.75) node{\scriptsize$(x_{0},T)$};
	\draw (4.5,5.75) node{\scriptsize$(x_{1},T)$};
	\draw (7.5,3) node{\small$t=t_{0}$};
	\draw[<-](4.7,4)--(5.2,4);
	\draw (6,4) node{\small$R_{1}$ curve };
	\draw (-1.3,4.5) node{\small$L_{1}$ curve };
	\draw[<-](-.1,4.5)--(-.5,4.5);
	\draw (1.5,2.99) node{\tiny$\bullet$  };
	\draw (2,5.5) node{\tiny$\bullet$};
	\draw (3,5.5) node{\tiny$\bullet$};
	\draw (4.5,5.5) node{\tiny$\bullet$};
	\draw (0,3) node{\tiny$\bullet$};
	\draw (1.5,3.25) node{\scriptsize $(x_{1,k},t_{0})$  };
	\draw (0,-.25) node{\scriptsize $x=0$};
	\draw (0,0)node{\tiny$\bullet$};
	\end{tikzpicture}
	\caption{Rarefaction from the interface cannot occur, hence the above figure is not possible.}\label{fig-3}
\end{figure}
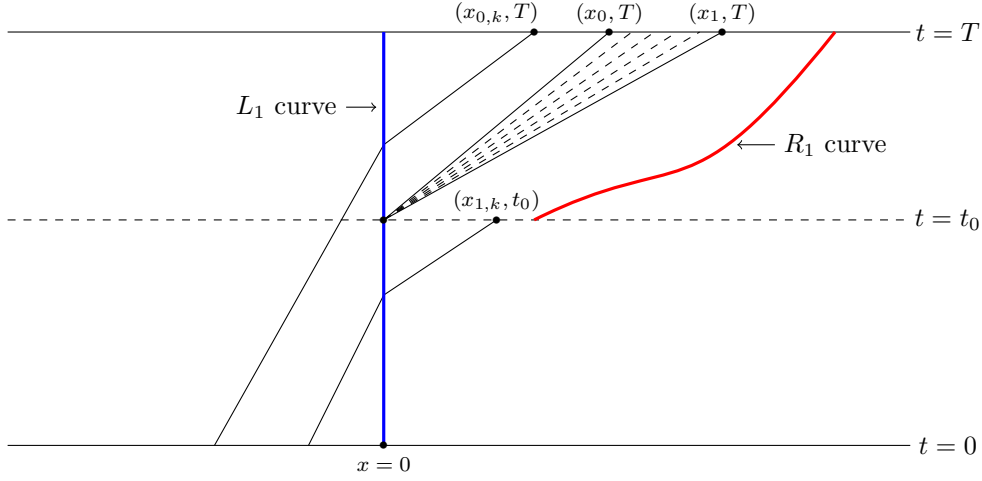
\begin{proof}
	Suppose not, without loss of generality we can assume that there exist $0<x_{1}<x_{2}<R_{2}(t)$, $p_{1}\in ch_{+}(x_{1},t), q_{1}\in ch_{+}(x_{2},t)$, $t_{0}\in(0, t)$ such that $t_{0}=t-x_{1}/p_{1}=t-x_{2}/q_{1}.$ Therefore from \descref{t1item1}{(1)} of theorem \ref{theorem3.1}, if $x_{1}<x<x_{2}, \gamma\in ch(x,t)$, then $\exists \ p_{1}(x)\in ch_{+}(x,t), p_{3}(x)\in ch_{-}(x,t)$ with
	\begin{eqnarray*}
		\gamma(\theta)=
\left\{\begin{array}{llll}			x+p_{1}(x)(\theta-t) &\mbox{ if }& t-\frac{x}{p_{1}(x)}\le \theta\le t,\\
			p_{3}(x)\bigg(\theta -t+\frac{x}{p_{1}(x)}\bigg)&\mbox{ if }& 0\le x\le t-\frac{x}{p_{1}(x)},
\end{array}\right.	\end{eqnarray*}
and $t_{0}=t-x/p_{1}(x)$. Hence for each $x_{1}<x<x_{2},\ p_{1}(x)$ is unique.

Let $u_{0,k}\in C(\R)\cap L^{\f}(\R)$ be satisfying $u_{0,k}\rr u_{0}$ in $L^{1}_{loc}(\R)$ as $k\rr \f$. Let $x_{1}<x<y<x_{2}$, then from \descref{t1item3}{(3)} of theorem \ref{theorem3.1}, for large $k\in\mathbb{N}$, there exist $\gamma_{k}\in ch_k(x,t), \dot{\gamma_{k}}(\theta)=(p_{1,k},\emptyset, p_{3,k}), \eta_{k}\in ch_{k}(y,t),  \dot{\eta_{k}}(\theta)=(q_{1,k},\emptyset, q_{3,k})$ such that
$\lim\limits_{k\rr\f}((p_{1,k},\emptyset, p_{3,k}),  (q_{1,k},\emptyset, q_{3,k}))=((\ti{p}_{1},\emptyset, p_{3}), (\ti{q}_{1},\emptyset, q_{3})).$
From lemma \ref{stability} and by the uniqueness of $p_{1}(x)$, $p_{1}(y)$, it follows that $\tilde{p_{1}}=p_{1}(x), \tilde{q_{1}}=p_{1}(y)$. Since $v_{0,k}\in C^{1}(\R)$, hence by minimizing property, we have
\begin{eqnarray}
\frac{\partial}{\partial p_{3}}\Gamma_{v_{0,k},\lambda}(x,t)|_{\lambda=\gamma}&=&0,\\
\frac{\partial}{\partial p_{1}}\Gamma_{v_{0,k},\lambda}(x,t)|_{\lambda=\gamma}&=&0,\\
g(g^{*'}(p_{3,k}))&=&f(f^{*'}(p_{1,k})),\\
g(g^{*'}(q_{3,k}))&=&f(f^{*'}(q_{1,k})),
\end{eqnarray}
and from lemma \ref{lemma2.1}, we have
$$-p_{3,k}\bigg(t-\frac{x}{p_{1,k}}\bigg)\le-q_{3,k}\bigg(t-\frac{y}{q_{1,k}}\bigg).
$$ Letting $k\rr\f$ to obtain 
\begin{eqnarray}\label{eq4.6}
\begin{array}{lll}
g(g^{*'}(p_{3}))&=&f(f^{*'}(p_{1}(x))),\\
g(g^{*'}(q_{3}))&=&f(f^{*'}(p_{1}(y))),
\end{array}
\end{eqnarray}
 hence 
$$-p_{3}\bigg(t-\frac{x}{p_{1}(x)}\bigg)\le-q_{3}\bigg(t-\frac{y}{p_{1}(y)}\bigg).$$
Since $t-\frac{x}{p_{1}(x)}=t-\frac{y}{p_{1}(y)}=t_{0}$, we have $q_{3}\le p_{3}$. Due to $q_{3}\ge0$, we obtain $0\le q_{3}\le p_{3}$ and $\theta_{g}\le g^{*'}(q_{3}), \theta_{f}\le f^{*'}(p_{3})$. As $g$ is an increasing function on $(\theta_{g}, \f)$, we get $g(g^{*'}(p_{3}))\ge g(g^{*'}(q_{3}))$. This implies that $f(f^{*'}(p_{1}(x)))\ge f(f^{*'}(p_{1}(y))).$ Because of the fact that $p_{1}(x)\ge 0$, $q_{1}(y)\ge 0$ and $f$ is an increasing function on $(\theta_{f},\f)$, it follows that $p_{1}(x)\ge p_{1}(y)$. Therefore we have 
$\frac{x}{p_{1}(x)}<\frac{y}{p_{1}(x)}\le\frac{y}{p_{1}(y)}=\frac{x}{p_{1}(x)},$
which is contradiction. This proves the Lemma.
\end{proof}


As an immediate consequence of this lemma, we have the following:
\begin{lemma}\label{lemma4.3}
	Let $u,R_{2}(T),L_{2}(T),t_{+}, t_{-}$ be as in theorem \ref{theorem3.1}. Then for all $t>0$, $x\mapsto t_{+}(x,t), x\in (0, R_{2}(t))$ is a strictly decreasing function and $x\mapsto t_{-}(x,t)$, $x\in(0,L_{2}(t))$ is strictly increasing function.
\end{lemma}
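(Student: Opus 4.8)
The plan is to argue by contradiction, upgrading the monotonicity already recorded in Theorem \ref{theorem3.1} to \emph{strict} monotonicity by means of the no forward rarefaction principle of Lemma \ref{2.2}. Recall from \descref{ii.}{(ii)} of Theorem \ref{theorem3.1} that $x\mapsto t_{+}(x,t)$ is non-increasing on $(0,R_{2}(t))$, and from \descref{v.}{(v)} that $x\mapsto t_{-}(x,t)$ is non-decreasing on $(L_{2}(t),0)$. It therefore suffices to exclude the possibility that either map is locally constant, since for a non-increasing (resp.\ non-decreasing) function failure of strict monotonicity is the same as constancy on a nondegenerate subinterval.

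First I would treat $t_{+}$. Suppose it fails to be strictly decreasing; being non-increasing and taking equal values at two points $0<x_{1}<x_{2}<R_{2}(t)$, it must then be constant, equal to some $t_{0}$, on all of $[x_{1},x_{2}]$. Next I would exploit the structure recalled at the start of this section: for every $x\in(0,R_{2}(t))$ each $\eta\in ch(x,t)$ is a two-segment curve with $\dot\eta=(p_{1},\emptyset,p_{3})$, $p_{1}>0$, $p_{3}>0$, which meets the interface $\{x=0\}$ precisely at time $t-x/p_{1}\in(0,t)$ and then descends below it. Hence, by the very definition of $t_{+}(x,t)$ as the exit time of a minimizing curve, there is a slope $p_{1}(x)\in ch_{+}(x,t)$ with $t-x/p_{1}(x)=t_{+}(x,t)=t_{0}$; for a.e.\ $x$ the minimizer is unique (the value function being differentiable there), which identifies $p_{1}(x)$ unambiguously, while in general the attainment of the infimum is guaranteed by the closedness of $ch(x,t)$.

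Choosing now any two such uniqueness points $x_{1}<a<b<x_{2}$, the corresponding slopes satisfy $p:=p_{1}(a)\in ch_{+}(a,t)$, $q:=p_{1}(b)\in ch_{+}(b,t)$ and
\[
t_{0}=t-\frac{a}{p}=t-\frac{b}{q},
\]
which is exactly the configuration defining a forward rarefaction from the interface on the pair $0<a<b<R_{2}(t)$. This contradicts Lemma \ref{2.2}, and therefore $x\mapsto t_{+}(x,t)$ is strictly decreasing on $(0,R_{2}(t))$.

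The statement for $t_{-}$ follows by the mirror-image argument: on $(L_{2}(t),0)$ the characteristics have negative upper slope $p_{1}<0$ and again meet the interface at time $t-x/p_{1}$, so local constancy of the non-decreasing map $x\mapsto t_{-}(x,t)$ would produce two points with a common exit time, i.e.\ a left-interface forward rarefaction, which is likewise excluded by the corresponding half of Lemma \ref{2.2}. The only genuinely delicate step is the second one—verifying that the infimum defining $t_{+}$ is realized by an honest slope in $ch_{+}$, so that the equality $t_{0}=t-a/p=t-b/q$ holds verbatim. I expect to settle this via a.e.\ uniqueness of characteristics (from differentiability of $v$, consistent with \descref{t1item1}{(1)} of Theorem \ref{theorem3.1}) together with the closedness of $ch(x,t)$, after which the reduction to Lemma \ref{2.2} is immediate.
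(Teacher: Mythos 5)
Your proof is correct and follows essentially the same route as the paper's: both upgrade the non-strict monotonicity of $t_{\pm}$ from Theorem \ref{theorem3.1} to strict monotonicity by noting that two points $0<x_{1}<x_{2}<R_{2}(t)$ with equal value of $t_{+}$ yield slopes $p_{1}\in ch_{+}(x_{1},t)$, $q_{1}\in ch_{+}(x_{2},t)$ realizing a common interface touch time, i.e.\ a forward rarefaction, contradicting Lemma \ref{2.2}. The additional care you take (constancy on a subinterval, attainment of the infimum via closedness of $ch(x,t)$, a.e.\ uniqueness points) merely makes explicit details the paper leaves implicit.
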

	\begin{proof}
		We prove this for $t_{+}(x,t)$ and similarly proof holds for $t_{-}(x,t)$. Suppose $x\mapsto t_{+}(x,t)$ is not strictly decreasing function, then there exist $0<x_{1}<x_{2}<R_{2}(t)$ and a $t_{0}\in(t_{+}(R_{2}(t)-,t),t)$ such that $t_{0}=t_{+}(x_{1},t)=t_{+}(x_{2},t)$. Then there exist $p_{1}$ and $q_{1}$ such that $t_{+}(x_{1},t)=t-\frac{x_{1}}{p_{1}}, t_{+}(x_{2},t)=t-\frac{x_{2}}{p_{2}}$. Hence $u$ admits a forward rarefaction from the interface and from lemma \ref{2.2} we get a contradiction. This proves the lemma.
	\end{proof}
			\begin{definition}
			Let $I_+= [f'(\bar{\theta}_{g}),\f)$, $g_+=g|_{[\T_g,\f)}$, then define 
			$h_{+}:I_{+}\rr[0,\f)$ by
						\begin{equation*}
						h_{+}(p)=g'\circ g_{+}^{-1}\circ f\circ (f')^{-1}(p).
						\end{equation*}
								\end{definition}

	\begin{lemma}\label{lemma3.1}
	Let $T>0$ and denote $t_{\pm}(x,T)=t_{\pm}(x)$. Then
		 For a.e., $x\in (0,R_{2}(T)),\ 
			-\frac{y_{-,0}(t_{+}(x))}{t_{+}(x)}=h_{+}\left(\frac{x}{T-t_{+}(x)}\right).$
	\end{lemma}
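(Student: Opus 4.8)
The plan is to read the identity directly off the trace relations in \descref{x.}{(x)} of theorem \ref{theorem3.1} together with the Rankine--Hugoniot relation \descref{viii.}{(viii)}, the only genuine work being to check that $g_+^{-1}$ selects the correct (increasing) branch of $g$. I would fix an $x\in(0,R_2(T))$ in the full-measure set on which \descref{x.}{(x)} holds and abbreviate $t_0=t_+(x)\in(0,T)$. Since $0<x<R_2(T)$, the structural description at the start of section \ref{technical} furnishes a minimizing curve through $(x,T)$ with slopes $(p_1,\emptyset,p_3)$, $p_1>0$, $p_3>0$, touching the interface at time $t_0$; by \descref{x.}{(x)} these slopes are exactly $p_1=\frac{x}{T-t_0}=f'(u(0+,t_0))$ and $p_3=-\frac{y_{-,0}(t_0)}{t_0}=g'(u(0-,t_0))$. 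Thus the left-hand side of the asserted identity is precisely $p_3$, and the task is to show $h_+(p_1)=p_3$.

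The crux, and the step I expect to be the main obstacle, is the observation that $p_3>0$ puts $u(0-,t_0)$ on the increasing branch of $g$. Because $g$ is strictly convex with unique minimum at $\T_g$, the relation $g'(u(0-,t_0))=p_3>0$ forces $u(0-,t_0)>\T_g$, i.e. $u(0-,t_0)$ lies in the domain $[\T_g,\f)$ on which $g_+=g|_{[\T_g,\f)}$ is invertible. This is exactly what makes $g_+^{-1}\circ g$ act as the identity on $u(0-,t_0)$; without it $g_+^{-1}$ would return the reflected value from the decreasing branch and the identity would break. I would isolate this as the key lemma-internal fact.

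With that in hand the rest is a composition. From $p_1=f'(u(0+,t_0))$ and strict convexity of $f$ we get $(f')^{-1}(p_1)=u(0+,t_0)$, while $p_1>0$ gives $u(0+,t_0)>\T_f$. The relation \descref{viii.}{(viii)} yields $f(u(0+,t_0))=g(u(0-,t_0))$, so $f\circ(f')^{-1}(p_1)=g(u(0-,t_0))$; combining with the branch observation, $g_+^{-1}\big(f\circ(f')^{-1}(p_1)\big)=g_+^{-1}\big(g(u(0-,t_0))\big)=u(0-,t_0)$, whence $h_+(p_1)=g'\circ g_+^{-1}\circ f\circ(f')^{-1}(p_1)=g'(u(0-,t_0))=p_3=-\frac{y_{-,0}(t_0)}{t_0}$, which is the claim. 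Along the way I would also record that $p_1\in I_+$, so that $h_+(p_1)$ is legitimately defined: indeed $f(u(0+,t_0))=g(u(0-,t_0))>g(\T_g)=f(\bar{\T}_g)$, and since both $u(0+,t_0)$ and $\bar{\T}_g$ exceed $\T_f$, where $f$ is increasing, this gives $u(0+,t_0)>\bar{\T}_g$ and hence $p_1=f'(u(0+,t_0))>f'(\bar{\T}_g)$.
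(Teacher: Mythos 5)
Your algebraic core is sound and in fact isolates the same mechanism the paper uses: once one knows the flux equality $f(u(0+,t_0))=g(u(0-,t_0))$ together with $f'(u(0+,t_0))\ge 0$, $g'(u(0-,t_0))\ge 0$, the branch selection $g_+^{-1}\circ g=\mathrm{id}$ on $[\T_g,\f)$ forces $p_3=h_+(p_1)$, and the domain check $p_1\in I_+$ goes exactly as you say. The genuine gap is at the step where you invoke the Rankine--Hugoniot relation \descref{viii.}{(viii)} \emph{at the specific time} $t_0=t_+(x)$. Condition \descref{viii.}{(viii)} holds for a.e.\ $t$, while your conclusion must hold for a.e.\ $x$; to pass between them you need that $\{x\in(0,R_2(T)):\ t_+(x)\in E\}$ is Lebesgue null whenever $E\subset(0,T)$ is null. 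The only structural information available about $t_+$ at this point is that it is strictly decreasing (lemma \ref{lemma4.3}), and that is not enough: even a strictly decreasing, absolutely continuous map can pull a null set back to a set of positive measure. For instance, if $C\subset[0,1]$ is a fat Cantor set and $h(x)=-\int_0^x \chi_{[0,1]\setminus C}(s)\,ds$, then $h$ is strictly decreasing and absolutely continuous, $h(C)$ is null, yet $h^{-1}(h(C))\supset C$ has positive measure. So, as far as your argument shows, the set of $x$ at which you are not entitled to quote \descref{viii.}{(viii)} at $t_+(x)$ could have positive measure, and nothing established in the paper about $t_+$ rules this out.

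This is precisely the difficulty the paper's own proof is organized to avoid. There one first takes $u_0$ continuous; then for \emph{every} $x\in(0,R_2(T))$ and every $\eta\in ch(x,T)$ with $\dot\eta=(q_1,\emptyset,q_3)$, the first-order optimality conditions $\frac{\partial}{\partial q_1}\Gamma_{v_0,\eta}=\frac{\partial}{\partial q_3}\Gamma_{v_0,\eta}=0$, combined with the Legendre identity $f(f^{*'}(q))=qf^{*'}(q)-f^{*}(q)$, give the flux equality $f(f^{*'}(q_1))=g(g^{*'}(q_3))$ pointwise; that is, the Rankine--Hugoniot-type relation is \emph{derived from the variational principle at each $x$} rather than quoted as an a.e.-in-$t$ trace statement, and with $q_1=\frac{x}{T-t_+(x)}$, $q_3=-\frac{y_{-,0}(t_+(x))}{t_+(x)}$ (valid wherever $ch(x,T)$ is a singleton, i.e.\ off a countable set) the identity follows. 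The general case $u_0\in L^\f(\R)$ is then obtained by approximating with continuous data and passing to the limit using the stability lemma \ref{stability} together with the strict monotonicity of $t_+$ from lemmas \ref{2.2} and \ref{lemma4.3}. To salvage your shorter route you would have to prove that the pushforward of Lebesgue measure under $t_+$ is absolutely continuous (equivalently, that the inverse of $t_+$ satisfies Luzin's condition (N)); otherwise you must fall back on the approximation scheme, at which point your proof coincides with the paper's.
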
 
		\begin{proof}
			Let $R_{2}(T)>0$ and $u_{0}\in C_{0}(\R)\cap L^{\f}(\R)$. Let $x\in(0,R_{2}(T))$, then from non-intersecting of characteristics, $L_{1}(T)=0$. Let $\eta\in ch(x,T)$, $\dot{\eta}=(q_{1},\emptyset,q_{3})$, then we have		$	0=\frac{\partial}{\partial q_{3}}\Gamma_{v_{0},\eta}(x,T)=\frac{\partial}{\partial q_{1}}\Gamma_{v_{0},\eta}(x,T).$			This implies that 
			\begin{align*}
			v_{0}^{'}\left(-q_{3}\left(x-\frac{T}{q_{1}}\right)\right)=g^{*'}(q_{3})\ \mbox{and }			v_{0}^{'}(-q_{3}(x-\frac{T}{q_{1}}))=q_{1}f^{*'}(q_{1})-f^{*}(q_{1})+g^{*}(q_{3}).
			\end{align*}
			As $f(f^{*'}(q))=qf^{*}(q)-f^{*}(q)$, therefore we have from the above identities $f(f^{*'}(q_{1}))=g(g^{*'}(q_{3}))$. Due to $q_{1}\ge 0$, $q_{3}\ge 0$, we get $q_{3}=h_{+}(q_{1})$.
			Since $h_{+}$ is an increasing function, hence if $ch_{+}(x,T)=\{q_{1}\}$, then $ch_{-}(x,T)=\{q_{3}\}$. Therefore if $ch_{+}(x,T)=\{q_{1}\}$ then $\{\eta\}=ch(x,T)$ and $q_{1}=\frac{x}{T-t_{+}(x)}$, $q_{3}=-\frac{y_{-,0}(t_{+}(x))}{t_{+}(x)}$ and 
			\begin{equation}\label{eq-5}
			-\frac{y_{-,0}(t_{+}(x))}{t_{+}(x)}=h_{+}\bigg(\frac{x}{T-t_{+}(x)}\bigg).
			\end{equation}
			Let $u_{0}\in L^{\f}(\R)$ and $u_{0,k}\in C(\R)\cap L^{\f}(\R)$ such that $u_{0,k}\rr u_{0}$ in $L^{1}_{loc}(\R)$ as $k\rr\f$. Then from lemma \ref{stability}, 
$			\lim\limits_{k\rr\f} ch_{k}(x,T)\subset ch(x,T).
$			

Let $u_0\in L^\f(\R)$ and $u_{0,k}\in C^0(\R)\cap L^\f(\R)$ such that $u_{0,k}\rr u_0$ in $L^1_{\mbox{loc}}(\R)$ as $k\rr \f$. Then from lemma \ref{stability}, $R_{2,k}(T)\rr R_2(T)$ and $ch_k(x,t)\rr ch(x,t)$ as $k\rr \f$. From lemma \ref{lemma4.3}, for $x\in (0,R_2(T))$, $x\mapsto t_+(x)$ is a strictly decreasing function. Hence from \descref{t1item3}{3}-\descref{iv.}{(iv)} of theorem \ref{theorem3.1}, $x\mapsto y_{-,0}(t_+(x))$ is a non decreasing function. Therefore there exists a countable set $N\subset (0,R_2(T))$ such that for $x\notin N$, $ch(x,T)$ is a singleton set. Therefore from (\ref{lemma-stab-3}) of lemma \ref{stability}, for $x\notin N$, $\lim\limits_{k\rr \f} ch_k(x,T)=ch(x,T)$ and hence $\lim\limits_{k\rr \f}\left(t_{+,k}(x), y_{-,k}(t_{+,k}(x))\right)=\left(t_+,y_{-,0}(t_+(x))\right).$ 
Hence from \eqref{eq-5} for $x\notin N$, $x\in (0,R_2(T))$
$$-\frac{y_{-,0}(t_+(x))}{t_+(x)}=h_+\left(\frac{x}{T-t_+}\right).$$
This proves the lemma.
			\end{proof}

\begin{lemma}\label{3.3}
Let $[\al,\beta]\subset [0,\f)$ and $\rho:[\al,\beta]\rr (-\f,0]$ be a non decreasing function such that if $\rho(x)=0$, then 
$T-\frac{x}{f^{\p}(\bar{\T}_g)}\geq 0.$
\begin{enumerate}[(1)]
\item\label{3.3-1} Then there exists a strictly decreasing function 

 $t:(\al,\beta]\rr [0,T]$ such that for all $x\in (\al,\beta]$
\begin{enumerate}[(i)]
\item\label{3.3-1.i} $\frac{x}{T-t_+(x)}\in I_+$.
\item\label{3.3-1.ii} $-\frac{\rho(x)}{t_+(x)}=h_+\left(\frac{x}{T-t_+(x)}\right)$.
\item\label{3.3-1.iii}  If $\rho$ is continuous, then $t_+(x)$ is continuous.
\item\label{3.3-1.iv}  If $\al>0$, then $\lim\limits_{x\downarrow\al}t_+(x)=t(\al+)$ exist and satisfies 
$\frac{\rho(\al)}{t(\al+)}=h_+\left(\frac{\al}{T-t(\al+)}\right).$
\item\label{3.3-1.v}  If $\al=0$, then $t(0+)=T$ and $\lim\limits_{x\downarrow 0}\frac{x}{T-t_+(x)}=f'(p_0)$ exist and satisfies $-\frac{\rho(0)}{T}=h_+(f'(p_0)).$
\end{enumerate}
\item\label{3.3-2} Let $x_{0}>0$, $0<t_{2}<t_{1}<T$ such that $\frac{x_{0}}{T-t_{i}}\in I_{+}$ for $i=1,2$. Define for $i=1,2$,
			\begin{eqnarray}
			f'(a_{i})&=&\frac{x_{0}}{T-t_{i}},\\
			-\frac{\rho_{i}}{t_{i}}&=&h_{+}\bigg(\frac{x_{0}}{T-t_{i}}\bigg).\label{3}
			\end{eqnarray}
			Suppose $\rho_{1}<\rho_{2}\le 0$, then there exist $t_{3}\in(t_{2},t_{1})$, $\rho_{3}\in(\rho_{1},\rho_{2})$, $b_{1},b_{2}$ with $a_{2}<a_{1}, b_{2}<b_{1}$ such that 
			\begin{eqnarray}
			g(b_{i})&=&f(a_{i}), g'(b_{i})\ge0,\\
			-x_{0}&=&(T-t_{3})\bigg(\frac{f(a_{1})-f(a_{2})}{a_{1}-a_{2}}\bigg),\\
			-x_{0}&=&t_{3}\bigg(\frac{g(b_{1})-g(b_{2})}{b_{1}-b_{2}}\bigg).\label{6}
			\end{eqnarray}
			\end{enumerate}
			\end{lemma}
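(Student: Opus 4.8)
The plan is to handle the two parts by different means: part~(1) is an existence/uniqueness-and-regularity statement for the root of a scalar equation, while part~(2) is a direct Rankine--Hugoniot construction. Throughout I would first record the elementary properties of $h_+$: as a composition of the continuous strictly increasing maps $(f')^{-1}$, $f|_{[\theta_f,\infty)}$, $g_+^{-1}$ and $g'$, it is a continuous strictly increasing bijection of $I_+=[f'(\bar{\theta}_{g}),\infty)$ onto $[0,\infty)$, with $h_+(f'(\bar{\theta}_{g}))=0$ (because $f(\bar{\theta}_{g})=g(\theta_g)$ and $g'(\theta_g)=0$) and $h_+(p)\to\infty$ as $p\to\infty$ by superlinear growth; in particular $h_+^{-1}$ is continuous. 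Fixing $x\in(\alpha,\beta]$, I study $\Phi_x(t)=-\rho(x)/t-h_+\!\big(x/(T-t)\big)$ on the admissible set $t\in[t_x,T)$, where $t_x=\max\{0,\,T-x/f'(\bar{\theta}_{g})\}$ makes $x/(T-t)\in I_+$. Since $\rho(x)\le 0$ the first term is non-increasing, and since $t\mapsto x/(T-t)$ and $h_+$ are strictly increasing the second is strictly decreasing; hence $\Phi_x$ is continuous and strictly decreasing, with $\Phi_x(t)\to-\infty$ as $t\uparrow T$ and $\Phi_x(t_x+)\ge 0$ (at $t_x>0$ one has $h_+=0$; for $t_x=0$ either $\rho(x)<0$ and $-\rho(x)/t\to+\infty$, or $\rho(x)=0$, the hypothesis $\rho(x)=0\Rightarrow T-x/f'(\bar{\theta}_{g})\ge 0$ guaranteeing admissibility in this degenerate case). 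The intermediate value theorem then produces the unique root $t_+(x)$, which is exactly (i)--(ii).

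Strict monotonicity of $x\mapsto t_+(x)$ I would prove by contradiction. If $x_1<x_2$ but $t_+(x_1)\le t_+(x_2)$, then $x_1/(T-t_+(x_1))<x_2/(T-t_+(x_2))$, so applying $h_+$ and using (ii) gives $-\rho(x_1)/t_+(x_1)<-\rho(x_2)/t_+(x_2)$; on the other hand $\rho$ non-decreasing and non-positive gives $-\rho(x_1)\ge-\rho(x_2)\ge 0$, and with $0<t_+(x_1)\le t_+(x_2)$ this forces $-\rho(x_1)/t_+(x_1)\ge-\rho(x_2)/t_+(x_2)$ (the borderline case $\rho(x_1)=0$ is immediately contradictory, since then $\rho(x_2)=0$ and both arguments equal $f'(\bar{\theta}_{g})$), a contradiction. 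Continuity~(iii) is then automatic: a monotone function has only jumps, but at a putative jump $x_0$ both one-sided limits $t_+(x_0\pm)$ solve (ii) at $x=x_0$ by continuity of $\rho$ and $h_+$, contradicting uniqueness. For the endpoints, $t(\alpha+)=\lim_{x\downarrow\alpha}t_+(x)$ exists by monotonicity; if $\alpha>0$ then $t(\alpha+)<T$ (else the right-hand side of (ii) blows up while the left stays finite), and passing to the limit in (ii) yields (iv), with $\lim_{x\downarrow\alpha}\rho(x)$ playing the role of $\rho(\alpha)$. If $\alpha=0$, admissibility forces $x/(T-t_+(x))\ge f'(\bar{\theta}_{g})>0$, whence $T-t_+(x)\le x/f'(\bar{\theta}_{g})\to 0$ and $t(0+)=T$; then $x/(T-t_+(x))=h_+^{-1}\!\big(-\rho(x)/t_+(x)\big)\to h_+^{-1}(-\rho(0)/T)=:f'(p_0)$ by continuity of $h_+^{-1}$, which is (v).

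For part~(2) I would construct the data directly. Put $b_i=g_+^{-1}(f(a_i))$; since $f'(a_i)=x_0/(T-t_i)\in I_+$ gives $a_i\ge\bar{\theta}_{g}$ and thus $f(a_i)\ge f(\bar{\theta}_{g})=g(\theta_g)$, the $b_i\ge\theta_g$ are well defined with $g(b_i)=f(a_i)$ and $g'(b_i)\ge 0$, and $t_1>t_2\Rightarrow f'(a_1)>f'(a_2)\Rightarrow a_1>a_2\Rightarrow b_1>b_2$. With $s_f=\frac{f(a_1)-f(a_2)}{a_1-a_2}$ and $s_g=\frac{g(b_1)-g(b_2)}{b_1-b_2}$, strict convexity yields $f'(a_2)<s_f<f'(a_1)$ and $g'(b_2)<s_g<g'(b_1)$. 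Fixing $t_3$ and $\rho_3$ by the two interface identities, namely $t_3=T-x_0/s_f$ and $\rho_3=-t_3 s_g$, gives the required Rankine--Hugoniot relations; since $t_i=T-x_0/f'(a_i)$, the bracketing of $s_f$ gives $t_2<t_3<t_1$, and from $\rho_i=-t_i g'(b_i)$ (which is the defining relation for $\rho_i$, as $h_+(f'(a_i))=g'(b_i)$) together with $t_2<t_3<t_1$, $g'(b_2)<s_g<g'(b_1)$ and positivity of all factors, one obtains $\rho_1=-t_1 g'(b_1)<-t_3 s_g=\rho_3<-t_2 g'(b_2)=\rho_2$, as claimed.

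The main obstacle is concentrated in part~(1): one must verify carefully that the admissible $t$-interval is non-empty and that $\Phi_x$ carries the correct boundary signs, which is precisely where the hypothesis on $\rho$ at its zero set and the positivity $f'(\bar{\theta}_{g})>0$ (valid under $f(\theta_f)<g(\theta_g)$) enter, and then to upgrade pointwise existence to the global monotonicity and continuity of $t_+$ via the comparison argument above. Part~(2), by contrast, is a routine consequence of the strict convexity of $f$ and $g$ once the correct chord/derivative bracketing is in place.
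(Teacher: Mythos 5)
Your proposal follows essentially the same route as the paper's proof: part (1) is obtained by applying the intermediate value theorem to $F(t)=h_+\left(\frac{x}{T-t}\right)+\frac{\rho(x)}{t}$ on $\left[\max\left\{0,\,T-\frac{x}{f'(\bar{\theta}_g)}\right\},T\right]$ (your $\Phi_x$ is $-F$), strict monotonicity of $x\mapsto t_+(x)$ is proved by the identical comparison-by-contradiction argument, and part (2) uses exactly the paper's chord-slope bracketing $f'(a_2)<s_f<f'(a_1)$ and $g'(b_2)<s_g<g'(b_1)$ from strict convexity; like the paper's proof, you silently read the displayed identities in the statement with the correct signs, namely $x_0=(T-t_3)s_f$ and $\rho_3=-t_3 s_g$. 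Your extra observations — uniqueness of the root from strict monotonicity of $\Phi_x$, the deduction of (iii) from that uniqueness, and the explicit argument that $t(\alpha+)<T$ when $\alpha>0$ — are correct and actually supply details the paper leaves implicit.

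The one genuine gap is in (v). You argue $T-t_+(x)\le x/f'(\bar{\theta}_g)\to 0$, which requires $f'(\bar{\theta}_g)>0$, and you flag that this holds when $f(\theta_f)<g(\theta_g)$. But the paper's standing assumption is only $f(\theta_f)\le g(\theta_g)$; in the equality case one has $\bar{\theta}_g=\theta_f$, so $f'(\bar{\theta}_g)=0$, $I_+=[0,\infty)$, and your bound gives nothing. The paper closes this case by contradiction: if $t(0+)<T$, then $\frac{x}{T-t_+(x)}\to 0$ as $x\downarrow 0$, and passing to the limit in (ii) gives $-\frac{\rho(0)}{t(0+)}=h_+(0)$; for $h_+(0)$ to be defined one needs $0\in I_+$, i.e. $f'(\bar{\theta}_g)=0$ and $h_+(0)=0$, hence $\rho(0)=0$, and since $\rho$ is non-decreasing and non-positive this forces $\rho\equiv 0$ on the whole interval — contradicting the hypothesis that $\rho(x)=0$ implies $T-\frac{x}{f'(\bar{\theta}_g)}\ge 0$, which is impossible for $x>0$ when $f'(\bar{\theta}_g)=0$. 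Thus $t(0+)=T$ in all cases, after which your final step (boundedness of $\frac{x}{T-t_+(x)}$ and passage to the limit via continuity of $h_+^{-1}$) goes through verbatim. Adding this degenerate-case argument would bring your proof to the paper's level of generality.
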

	\begin{proof}
Now $\frac{x}{T-t}\in I_+$ if and only if $t\geq T-\frac{x}{f'(\bar{\T}_g)}$. For fixed $x\in (\al,\beta]$ define
$F(t):=h_+\left(\frac{x}{T-t}\right) +\frac{\rho(x)}{t}$ for $t\in \left[\max\left\{0,T-\frac{x}{f^\p(\bar{\T}_g)}, \right\}, T\right]$. If $\rho(x)=0$, then by the hypothesis, $T-\frac{x}{f^\p(\bar{\T}_g)}\geq 0$. Hence for $t_+(x)=T-\frac{x}{f^\p(\bar{\T}_g)}$, $F(t_+(x))=0$. Let us consider the case $\rho(x)\neq 0$. If $T-\frac{x}{f^\p(\bar{\T}_g)}\leq 0$, then take $t_0=0$ to obtain $F(0)=-\f$. If $T-\frac{x}{f^\p(\bar{\T}_g)}>0$, then take $t_0=T-\frac{x}{f^\p(\bar{\T}_g)}$ to obtain $F(t_0)<0$. As $x\neq 0$, we take $t=T$ to obtain $F(T)=\f$. Since $t\mapsto F(t)$ is continuous, hence there exists a $t_+(x)\in \left[\max\left\{0,T-\frac{x}{f^\p(\bar{\T}_g)}, \right\}, T\right]$ such that $F(t_+(x))=0$.
\par Let $\al<x_1<x_2<\beta$ and suppose $t(x_1)\leq t(x_2)$. Then 
$\frac{x_1}{T-t(x_1)}\leq \frac{x_1}{T-t(x_2)}<\frac{x_2}{T-t(x_2)}$. Subsequently, we have 
\begin{eqnarray*}
-\rho(x_1)&=& t(x_1)h_+\left(\frac{x_1}{T-t(x_1)}\right) < t(x_1) h_+\left(\frac{x_2}{T-t(x_2)}\right)\\
&\leq & t(x_2) h_+\left(\frac{x_2}{T-t(x_2)}\right)=-\rho(x_2),
\end{eqnarray*}
contradicting the non decreasing hypothesis on $\rho.$ This proves \ref{3.3-1.i} and \ref{3.3-1.ii}. If $\al>0$, then $T-\frac{\al}{f^\p(\bar{\T}_g)}<T$ and hence \ref{3.3-1.i} and \ref{3.3-1.ii} together imply \ref{3.3-1.iv}. Let $\al=0$ and $t_0=\lim\limits_{x\downarrow 0} t_+(x).$ Suppose $t_0<T$, then $\frac{x}{T-t_+(x)}\rr 0$ as $x\rr 0$. As $-\frac{\rho(0)}{t_0}=h_+(0)$, we have $0\in I_+.$ Therefore, $f'(\bar{\T}_g)=0$ and $h_+(0)=0$. This implies $\rho(0)=0$. Due to the fact that $\rho$ is a non decreasing function we obtain for all $x\in [0,\beta],$ $0=\rho(0)\leq \rho(x) \leq 0$ and therefore $\rho(x)=0$ for all $x\in [0,\beta]$. But for $x\neq 0, x\in (0,\beta]$, $-\f=T-\frac{x}{f^\p(\bar{\T}_g)}\geq 0$, which is a contradiction. Hence $t_0=T$ and $\frac{x}{T-t_+(x)}=p(x)$ is bounded. Let $p_0=\lim\limits_{x\rr 0} p(x),$ then $-\frac{\rho(0)}{T}=h_+(p_0).$ This proves \ref{3.3-1.v}.

%
		Proof of \ref{3.3-2}: As $t_{2}<t_{1}$, we have $\frac{x_{0}}{T-t_{2}}<\frac{x_{0}}{T-t_{1}}$. Thus, $a_{2}<a_{1}$ and $b_{2}<b_{1}$. By the choice of $a_{1}$ and $a_{2}$, $x_{0}$ satisfies $x_{0}+(t_{i}-T)f'(a_{i})=0$, for $i=1,2$. Since $f$ is convex, we get $f'(a_2)<\frac{f(a_{2})-f(a_{1})}{a_{2}-a_{1}}<f'(a_{1})$, hence the line $r(\theta)=x_{0}+(\theta-T)\frac{f(a_{2})-f(a_{1})}{a_{2}-a_{1}}$ meet the $t-axis$ at $t_{3}\in(t_{2},t_{1})$, that is $r(t_{3})=0$. This proves \eqref{6}. Again from the convexity of $g$, $g'(b_{2})<\frac{g(b_{2})-g(b_{1})}{b_{2}-b_{1}}<g'(b_{1})$ and thus 
$	-t_{3}g'(b_{1})<-t_{3}\frac{g(b_{2})-g(b_{1})}{b_{2}-b_{1}}<-t_{3}g'(b_{2}).
$	From \eqref{3}, we have, $-\rho_{i}=t_{i}h_{+}(f'(a_{i}))=t_{i}g'(b_{i})$ and $t_{3}\in(t_{2},t_{1})$ implies that $\rho_{3}=-t_{3}\bigg(\frac{g(b_{2})-g(b_{1})}{b_{2}-b_{1}}\bigg)\in(\rho_{1},\rho_{2})$. This proves the lemma.
		\end{proof}
		\subsection{Building blocks: Construction of shock solution and continuous solution}
 	\begin{lemma}\label{3.4}(Shock solution)
		Let $T>0$, $x_{0}>0$, $\rho_{1}<\rho_{2}\le0$. Assume that for $t\in[0,T]$, $\frac{x_0}{T-t}\geq f'(\bar{\T}_g)$ and  if $\rho_{2}=0$, then $T-\frac{x_{0}}{f'(\overline{\theta}_{g})}=0$. Let $a_{1}, a_{2}, b_{1}, b_{2}, t_{1}, t_{2}, t_{3}$ and $\rho_{3}$ be as in lemma \ref{3.3}. Define
		\begin{eqnarray}
		u_0(x)=\begin{cases*}
		b_{1} \quad \mbox{ if } x<\rho_{3},\\
		b_{2} \quad \mbox{ if } \rho_{3}<x<0,\\
		a_{2}\quad \mbox{ if } x>0,
		\end{cases*}
		\end{eqnarray}
		then the solution $u$ of \eqref{conlaw-equation} in $\R\times[0,T]$ with initial data $u_{0}$ is given by (see figure \ref{fig-4})
		\begin{eqnarray}
		u(x,t)=\left\{\begin{array}{llll}
		b_{1} &\mbox{ if }& x<0, x<\rho_{3}+\frac{g(b_{1})-g(b_{2})}{b_{1}-b_{2}}t,\\
		b_{2}  &\mbox{ if }& x<0, x>\rho_{3}+\frac{g(b_{1})-g(b_{2})}{b_{1}-b_{2}}t,\\
		a_{1} &\mbox{ if }& x>0, x<\frac{f(a_{1})-f(a_{2})}{a_{1}-a_{2}}(t-t_{3}),\\
		a_{2}  &\mbox{ if }& x>0, x>\frac{f(a_{1})-f(a_{2})}{a_{1}-a_{2}}(t-t_{3}).
\end{array}\right.		\end{eqnarray}		
\end{lemma}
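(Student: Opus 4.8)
Lemma 3.4 claims that a specific piecewise-constant initial datum $u_0$ (with values $b_1$, $b_2$ on the left half-line and $a_2$ on the right) produces a specific explicit solution $u(x,t)$ of the discontinuous-flux conservation law. The data $a_i, b_i, t_i, \rho_3$ come from Lemma 3.3(2), so I may freely use the relations proved there: $g(b_i)=f(a_i)$ with $g'(b_i)\ge 0$, the constraints $-x_0=(T-t_3)\big(\tfrac{f(a_1)-f(a_2)}{a_1-a_2}\big)=t_3\big(\tfrac{g(b_1)-g(b_2)}{b_1-b_2}\big)$, and the orderings $a_2<a_1$, $b_2<b_1$, $t_2<t_3<t_1$, $\rho_1<\rho_3<\rho_2\le 0$.

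**Plan of proof.** I would verify that the proposed $u$ is the entropy solution by checking three things. First, on the interior of each of the four constant regions the equation $u_t+F(x,u)_x=0$ holds trivially since $u$ is constant there. Second, I check that each interface between regions is an admissible (entropy-satisfying) shock. For $x>0$ the discontinuity between $a_1$ and $a_2$ travels along $x=\tfrac{f(a_1)-f(a_2)}{a_1-a_2}(t-t_3)$, whose speed is the Rankine--Hugoniot speed for flux $f$, and the Lax entropy condition $f'(a_2)<s<f'(a_1)$ follows from convexity of $f$ together with $a_2<a_1$ (this is exactly the convexity inequality invoked in Lemma 3.3(2)). Symmetrically, for $x<0$ the shock between $b_1$ and $b_2$ moves with the R--H speed for $g$, and $g'(b_2)<s<g'(b_1)$ by convexity of $g$ and $b_2<b_1$.

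**The interface at $x=0$.** The delicate point, and the main obstacle, is the behavior at the interface $\{x=0\}$: I must confirm that the two shocks emanate correctly from the single point $(0,t_3)$ and that the interface entropy condition \eqref{entropy-condition} together with the R--H condition \eqref{RH-condition} hold there. The choice of $t_3$ in Lemma 3.3(2) is precisely what makes both shock lines pass through $(0,t_3)$: the right shock reaches $x=0$ at $t=t_3$ because $-x_0=(T-t_3)\tfrac{f(a_1)-f(a_2)}{a_1-a_2}$ is the condition that the line through the foot of the $a_2$-characteristic hits the axis at $t_3$, and the left shock leaves $x=0$ at $t=t_3$ by the companion identity $-x_0=t_3\tfrac{g(b_1)-g(b_2)}{b_1-b_2}$. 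Thus for $t<t_3$ the interface trace is $u(0-,t)=b_1$, $u(0+,t)=a_1$, while for $t>t_3$ it is $u(0-,t)=b_2$, $u(0+,t)=a_2$; in both cases $g(u(0-,t))=f(u(0+,t))$ holds because $g(b_i)=f(a_i)$, giving \eqref{RH-condition}. For the interface entropy condition I must check that we never have $f'(u(0+,t))>0$ together with $g'(u(0-,t))<0$ on a positive-measure set; since $g'(b_i)\ge 0$ by Lemma 3.3(2), the trace $g'(u(0-,t))\ge 0$ everywhere, so the forbidden configuration cannot occur.

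**Finishing.** Finally I would confirm the initial condition: at $t=0$ the formula reduces to $b_1$ for $x<\rho_3$, $b_2$ for $\rho_3<x<0$, and $a_2$ for $x>0$, matching $u_0$ exactly (the $a_1$-region is empty at $t=0$ since it requires $x<\tfrac{f(a_1)-f(a_2)}{a_1-a_2}(t-t_3)<0$ for $x>0$, impossible at $t=0$). Having verified that $u$ is a weak solution satisfying both the Kru\v{z}kov-type inequalities inside each half-plane (via the Lax conditions) and the interface conditions, uniqueness from \cite{Kyoto} (Theorem \ref{theorem3.1}) identifies it as \emph{the} entropy solution, completing the proof. The one genuinely nontrivial verification is the geometric consistency at $(0,t_3)$, but this is handed to us by the construction in Lemma \ref{3.3}, so the argument is essentially a bookkeeping check of R--H speeds, Lax admissibility, and the sign condition $g'(b_i)\ge 0$.
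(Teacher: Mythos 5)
Your proof is correct and takes essentially the same route as the paper, whose entire proof is the compressed observation that, by Lemma \ref{3.3}, $\rho_{3}=-t_{3}\frac{g(b_{1})-g(b_{2})}{b_{1}-b_{2}}$ and $f(a_{i})=g(b_{i})$, hence $u$ is a weak solution satisfying the interior and interface entropy conditions --- exactly the bookkeeping (constant regions, Rankine--Hugoniot and Lax conditions for each shock, interface conditions at $(0,t_{3})$, initial data, uniqueness) that you carry out in detail. One harmless slip: your interface traces are time-reversed (for $t<t_{3}$ they are $u(0-,t)=b_{2}$, $u(0+,t)=a_{2}$, since the $g$-shock reaches the interface at $t_{3}$ and the $f$-shock emanates from it afterwards), but this does not affect the argument, because both matched pairs $(b_{i},a_{i})$ satisfy the relation $g(b_{i})=f(a_{i})$ and the sign condition $g'(b_{i})\ge 0$ on which your verification rests.
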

\begin{proof}
	From lemma \ref{3.3}, $\rho_{3}=-t_{3}\bigg(\frac{g(b_{1})-g(b_{2})}{b_{1}-b_{2}}\bigg)$ and $f(a_{i})=g(b_{i})$, hence $u$ is a weak solution satisfying the interior and interface entropy condition with initial data $u_{0}$. This proves the lemma.
\end{proof}
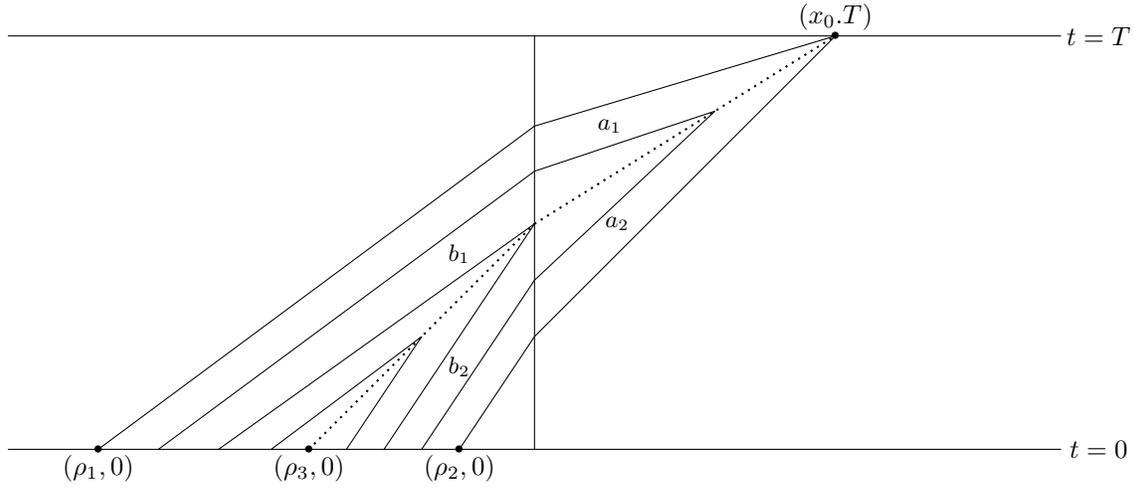
\begin{figure}
	\centering
	\begin{tikzpicture}
	\draw (-7,0)--(7,0);
	\draw (0,0)--(0,5.5);
	\draw (-7,5.5)--(7,5.5);
	\draw (-1,0)--(0,1.5);
	\draw (-1.5,0)--(0,2.25);
	\draw (-2,0)--(0,3);
	\draw (-2.5,0)--(-1.5,1.5);
	\draw [dotted,thick](-3,0)--(0,3);
	\draw (-3.5,0)--(-1.5,1.5);
	\draw (-4.2,0)--(0,3);
	\draw (-5,0)--(0,3.7);
	\draw (-5.8,0)--(0,4.3);
	\draw (0,1.5)--(4,5.5);
	\draw [dotted,thick] (0,3)--(4,5.5);
	\draw (0,2.25)--(2.4,4.5);
	\draw (0,3.7)--(2.4,4.5);
	\draw (0,4.3)--(4,5.5);
	\draw (-3,0) node{\tiny$\bullet$};
	\draw (-5.8,0) node{\tiny$\bullet$};
	\draw (-1,0) node{\tiny $\bullet$};
	\draw (4,5.5) node{\tiny $\bullet$};
	\draw (-5.8,-.25) node{\small$(\rho_{1},0)$};
	\draw (-3,-.25) node{\small$(\rho_{3},0)$};
	\draw (-1,-.25) node{\small$(\rho_{2},0)$};
	\draw (7.5,0) node{\small$t=0$};
	\draw (7.5,5.5) node{\small$t=T$};
	\draw (-1,2.6) node{\footnotesize$b_{1}$};
	\draw (-1,1.1) node{\footnotesize$b_{2}$};
	\draw (1,4.3) node{\footnotesize$a_{1}$};
	\draw (1.1,3) node{\footnotesize$a_{2}$};
	\draw (4,5.75) node{\small$(x_{0}.T)$};
	\end{tikzpicture}
	\caption{The figure illustrates shock solution.}\label{fig-4}
\end{figure} 
	\begin{remark}
Suppose $x_0=0$, then by \ref{3.3-1}--\ref{3.3-1.v} of lemma \ref{3.3}, we have $f'(p_0)=\lim\limits_{x\rr 0} \frac{x}{T-t_+(x)}$, hence $f'(p_0)\in I_+$ implies that $p_0\geq \bar{\T}_g$. Let $q_0 \geq \T_g$ such that $f(p_0)=g(q_0).$ Hence 
$$-\frac{\rho(0)}{T}=h_+\left(f'(p_0)\right)=g'(q_0).$$
Define 
\begin{eqnarray*}
u_0(x)=\left\{\begin{array}{lllll}
q_0 &\mbox{if}& x\leq 0,\\
p_0 &\mbox{if}& x\geq 0,\\
\end{array}\right.
\end{eqnarray*}
then $u(x,t)=u_0$ is the solution of (\ref{conlaw-equation}), (\ref{entropy-condition}).		\end{remark}

	In lemma \ref{3.4}, under suitable hypothesis on $x_{0}$, $\rho_{i}$, $i=1,2$, we constructed a solution which admits shocks. Next we consider the case where $0\leq x_{1}<x_{2}$ and $\rho_{0}<0$. Under a suitable hypothesis, we construct a continuous solution to \eqref{conlaw-equation}.
	
	Let $T>0$, $0\le x_{1}<x_{2}$, $\rho_{0}<0$. From lemma \ref{3.4}, let $0<t_{i}<T$, $i=1,2$, be such that
	\begin{equation*}
	h_{+}\bigg(\frac{x_{i}}{T-t_{i}}\bigg)=-\frac{\rho_{0}}{t_{i}}.
	\end{equation*}
	Let $f'(a_{i})=\frac{x_{i}}{T-t_{i}}$, $f(a_{i})=g(b_{i})$, $g'(b_{i})>0$. Again from lemma \ref{3.4}, let $t_+(x):[x_{1},x_{2}]\rr[t_{2},t_{1}]$ be the unique continuous strictly decreasing function satisfying
	 \begin{equation*}
	 h_{+}\bigg(\frac{x}{T-t_{+}(x)}\bigg)=-\frac{\rho_{0}}{t_+(x)}, \quad x\in[x_{1},x_{2}].
	 \end{equation*}
	By the uniqueness of $t_+(x)$, $t_+(x_{i})=t_{i}$ and $t_+(\cdot)$ is a homeomorphism.	 

	For $i=1, 2$, let 
	\begin{eqnarray*}
	\eta_{i}(t)&=&-\frac{\rho_{0}}{t_{i}}(t-t_{i}),\\
	\gamma_{i}(t)&=&f'(a_{i})(t-t_{i}),\\
	u_{0}(x)&=&\left\{\begin{array}{llll}
	b_{1}&\mbox{if}& x<\rho_{0},\\
	b_{2}&\mbox{if}&\rho_{0}<x<0,\\
	a_{2}&\mbox{if}& x>0.\\
	\end{array}\right.
	\end{eqnarray*}
	For $x\ge0$, let $t(x, t)$ be the unique solution of 
	\begin{equation*}
	h_{+}\left(\frac{x}{t-t_+(x, t)}\right)=-\frac{\rho_{0}}{t_+(x, t)}.
	\end{equation*}
	Then we have following:
	\begin{lemma}\label{3.5}(Continuous solution) (See figure \ref{fig-5})
		Let $\rho_{0}<0$, $0<t_{2}<t_{1}$, $a_{i}, b_{i}, \eta_{i}, \gamma_{i}$, $i=1, 2$ be as above. Let $u(x, t)$ be the solution of \eqref{conlaw-equation} with initial data $u_{0}$ as above. Then 
		\begin{eqnarray}
		u(x, t)=\left\{\begin{array}{llll}
		b_{1}&\mbox{if}& x<\min\{\eta_{1}(t),0\},\\
		(g')^{-1}\left(\frac{x-\rho_{0}}{t}\right)&\mbox{if}& \min\{\eta_{1}(t),0\}<x<\min\{\eta_{2}(t),0\},\\
		b_{2}&\mbox{if}& \eta_{2}(t)<x<0,\\
		a_{2}&\mbox{if }& \max\{0, \gamma_{2}(t)\}<x,\\
		(f')^{-1}\left(\frac{x}{t-t_+(x, t)}\right)&\mbox{if}&\max\{\gamma_{1}(t), 0\}<x<\gamma_{2}(t),\\
		a_{1}&\mbox{if}& 0<x<\gamma_{1}(t).\\
		\end{array}\right.
		\end{eqnarray}
	\end{lemma}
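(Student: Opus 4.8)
The plan is to verify directly that the function $u$ in the statement is the entropy solution of \eqref{conlaw-equation} with datum $u_0$: namely that it is a weak solution, is a classical solution in each open half--plane $\{x>0\}$ and $\{x<0\}$, fulfils the Rankine--Hugoniot relation \eqref{RH-condition} together with the interface entropy condition \eqref{entropy-condition} on $\{x=0\}$, and recovers $u_0$ as $t\downarrow 0$. By the uniqueness part of Theorem \ref{theorem3.1} this identifies $u$ with the solution and proves the lemma.

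First I would make the wave geometry explicit. Using $g'(b_i)=-\rho_0/t_i$ and $f'(a_i)=\frac{x_i}{T-t_i}$ one rewrites $\eta_i(t)=\rho_0+g'(b_i)t$ and $\gamma_i(t)=f'(a_i)(t-t_i)$, so that on $\{x<0\}$ the states $b_1,b_2$ are joined by the centered rarefaction $u=(g')^{-1}\big(\frac{x-\rho_0}{t}\big)$ of the flux $g$ issued from $(\rho_0,0)$, whose edges are exactly $\eta_1,\eta_2$, while on $\{x>0\}$ the states $a_1,a_2$ are joined by the $f$--rarefaction emitted from the interface over the window $[t_2,t_1]$, whose edges are $\gamma_1,\gamma_2$. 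Since $t_+$ is strictly decreasing, the defining relation $h_+\big(\frac{x}{T-t_+(x)}\big)=-\rho_0/t_+(x)$ and the monotonicity of $h_+$ force $f'(a_1)<f'(a_2)$ and $g'(b_1)<g'(b_2)$, i.e. $a_1<a_2$ and $b_1<b_2$; hence both fans are genuine rarefactions. Consequently $u$ solves $u_t+g(u)_x=0$ on $\{x<0\}$ and $u_t+f(u)_x=0$ on $\{x>0\}$ classically and is continuous across each of $\eta_1,\eta_2,\gamma_1,\gamma_2$, so \eqref{RH-condition} holds trivially there and, there being no interior shocks, the interior entropy inequalities are satisfied with equality.

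The crux is the interface $\{x=0\}$, the only genuine discontinuity of $u$. I would split time into the three regimes in which $\eta_i,\gamma_i$ cross $x=0$, namely $0<t<t_2$, $t_2<t<t_1$ and $t>t_1$, and compute the one--sided traces. For $t<t_2$ neither fan has reached the interface, so $u(0-,t)=b_2$ and $u(0+,t)=a_2$, and $f(a_2)=g(b_2)$ gives \eqref{RH-condition}; symmetrically, for $t>t_1$ both fans have passed and $u(0-,t)=b_1$, $u(0+,t)=a_1$ with $f(a_1)=g(b_1)$. For $t_2<t<t_1$ the left trace is the rarefaction value $u(0-,t)=(g')^{-1}(-\rho_0/t)$, while the right trace comes from letting $x\downarrow 0$ in $u(x,t)=(f')^{-1}\big(\frac{x}{t-t_+(x,t)}\big)$. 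Here $\frac{x}{t-t_+(x,t)}$ must remain in $I_+=[f'(\bar{\theta}_g),\f)$, hence bounded below by $f'(\bar{\theta}_g)\ge 0$, which forces $t_+(x,t)\uparrow t$ as $x\downarrow 0$; passing the defining equation $h_+\big(\frac{x}{t-t_+(x,t)}\big)=-\rho_0/t_+(x,t)$ to the limit then gives $h_+(f'(u(0+,t)))=-\rho_0/t=g'(u(0-,t))$, and unfolding $h_+=g'\circ g_+^{-1}\circ f\circ(f')^{-1}$ together with $u(0-,t)\ge\theta_g$ turns this into $f(u(0+,t))=g(u(0-,t))$, i.e. \eqref{RH-condition} once more. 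The interface entropy condition \eqref{entropy-condition} is then immediate, since in every regime $u(0-,t)\in\{b_1,b_2,(g')^{-1}(-\rho_0/t)\}$ satisfies $g'(u(0-,t))\ge 0$, so the set $\{t:g'(u(0-,t))<0\}$ is empty and a fortiori $\{t:f'(u(0+,t))>0,\ g'(u(0-,t))<0\}$ has measure zero.

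I expect the delicate step to be precisely this $f$--side limit for $t_2<t<t_1$: the claim $t_+(x,t)\uparrow t$ and the legitimacy of passing the relation defining $t_+$ to the limit, which I would underpin with the strict monotonicity of $t_+$ from Lemma \ref{lemma4.3} and the absence of a forward interface rarefaction from Lemma \ref{2.2}, guaranteeing that the emitted characteristics foliate the fan without collision. Finally I would recover the initial datum by letting $t\downarrow 0$: then $\eta_i(t)\to\rho_0$ and $\gamma_i(t)\to -f'(a_i)t_i<0$, so the rarefaction wedges collapse onto the jumps at $\{x=\rho_0\}$ and $\{x=0\}$ and $u(\cdot,t)\to u_0$, which equals $b_1,b_2,a_2$ on $(-\f,\rho_0),(\rho_0,0),(0,\f)$. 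Assembling the three items yields that $u$ is the entropy solution with datum $u_0$.
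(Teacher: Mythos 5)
Your argument is correct, but it is a genuinely different proof from the one in the paper. You verify that the displayed formula is an entropy solution (classical in each half-plane, since the left piece is a centered $g$-rarefaction from $(\rho_0,0)$ and the right piece a non-centered fan foliated by non-crossing straight characteristics emitted from the interface during $[t_2,t_1]$; Rankine--Hugoniot and the interface entropy condition checked via one-sided traces in the three time regimes $t<t_2$, $t_2<t<t_1$, $t>t_1$), and then identify it with \emph{the} solution by uniqueness. The paper never leaves its variational framework: it partitions $\{x>0\}$ into $\Omega_1,\Omega_2,\Omega_3$ (below $\gamma_1$, between $\gamma_1$ and $\gamma_2$, beyond $\gamma_2$) and characterizes the minimizing control curves $w=(w_1,\emptyset,w_3)\in ch(x,t)$ in each region: stationarity in the crossing time gives $f\bigl((f^*)'\bigl(\tfrac{x}{t-\tau}\bigr)\bigr)=g\bigl((g^*)'\bigl(-\tfrac{w_3(0)}{\tau}\bigr)\bigr)$, and monotonicity of $h_+$ plus non-intersection of characteristics force $w_3(0)<\rho_0$, $w_3(0)=\rho_0$, $\rho_0<w_3(0)<0$ in $\Omega_1,\Omega_2,\Omega_3$ respectively, from which $u=\partial_x v$ is read off. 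Your route is more economical and mirrors the paper's own proof of the shock Lemma \ref{3.4}, with the only delicate point (the limit $t_+(x,t)\uparrow t$ as $x\downarrow 0$) handled correctly. The paper's route buys two things: it needs no uniqueness theorem --- a nontrivial point, since the paper stresses that $L^1$-contraction for discontinuous flux is unknown in general and states contractivity, part (xii) of Theorem \ref{theorem3.1}, only for solutions whose discontinuities are discrete Lipschitz curves; your appeal to a ``uniqueness part of Theorem \ref{theorem3.1}'' should instead be an explicit appeal to the Kyoto uniqueness, which applies because your candidate's only discontinuity is $\{x=0\}$ --- and it identifies the actual minimizing characteristics, i.e.\ shows $t_+(x,t)$ is the genuine crossing time, which the backward constructions of Section \ref{backward} later use. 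Also, the strict monotonicity of the implicit function $t_+(\cdot,t)$ here comes from Lemma \ref{3.3}, not Lemma \ref{lemma4.3}, which concerns the $t_+$ attached to an arbitrary solution.
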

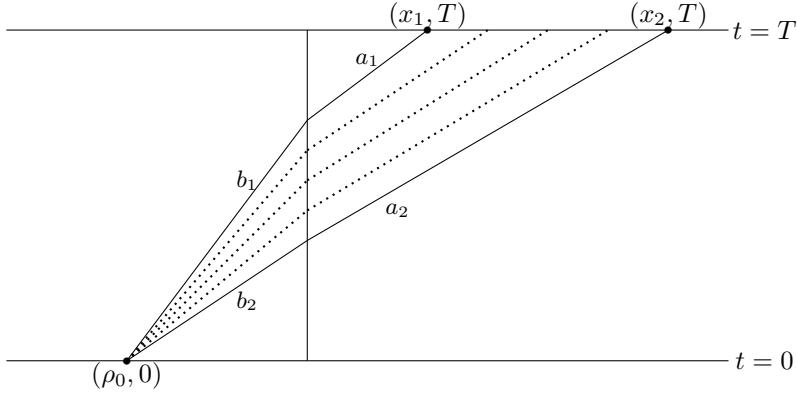
\begin{figure}
	\centering
	\begin{tikzpicture}[scale=.8]
	\draw (-5,0)--(7,0);
	\draw (0,0)--(0,5.5);
	\draw (-5,5.5)--(7,5.5);
	\draw [dotted,thick](-3,0)--(0,3);
	\draw[dotted, thick](-3,0)--(0,2.5);
	\draw[dotted, thick](-3,0)--(0,3.5);
	\draw (-3,0)--(0,2);
	\draw (-3,0)--(0,4);
	\draw (0,2)--(6,5.5);
	\draw (0,4)--(2,5.5);
	\draw[dotted, thick](0,3.5)--(3,5.5);
	\draw[dotted, thick](0,3)--(4,5.5);
	\draw[dotted, thick](0,2.5)--(5,5.5);
	\draw (2,5.75) node{\small$(x_{1},T)$};
	\draw (6,5.75) node{\small$(x_{2},T)$};
	\draw (-3,-.25) node{\small$(\rho_{0},0)$};
	\draw (7.6,0) node{\small$t=0$};
	\draw (7.6,5.5) node{\small$t=T$};
	\draw (-1,3) node{\footnotesize$b_{1}$};
	\draw (-1,1) node{\footnotesize$b_{2}$};
	\draw (1,5) node{\footnotesize$a_{1}$};
	\draw (1.5,2.5) node{\footnotesize$a_{2}$};
	\draw (2,5.5) node{\tiny $\bullet$};
	\draw (6,5.5) node{\tiny $\bullet$};
	\draw (-3,0) node{\tiny $\bullet$};
	\end{tikzpicture}
	\caption{The figure illustrates continuous solution.}\label{fig-5}
\end{figure}

\begin{proof}

Define the regions in $\R\times(0,T)$ by 
\begin{eqnarray*}
\Omega_1&=&\{(x,t):\ 0<x<\ga_1(t)\},\\
\Omega_2&=&\{(x,t):\ \mbox{max}\{0,\ga_1(t)\}<x<\ga_2(t)\},\\
\Omega_3&=&\{(x,t):\ \mbox{max}\{0,\ga_2(t)\}<x\}.
\end{eqnarray*}
Let $x>0$, $0<t\leq T$ and $w\in c_b(x,t) \cap ch(x,t)$. Then $w=(w_1,\emptyset,w_3)$ is given by 
\begin{eqnarray*}
w_1(\T)&=& x+\frac{x}{t-\tau}(\T-t),\ \tau\leq \T\leq t,\\
w_3(\T)&=& -\frac{w_3(0)}{\tau} (\T-\tau),
\end{eqnarray*}
where $\tau>0$ satisfies $w_3(\tau)=w_1(\tau)=0$. Since $w\in ch(x,t)$, hence $\frac{\partial}{\partial \tau} \Gamma _{v_0,w}(x,t)=0$.  That is 
$$0=-f^*\left(\frac{x}{t-\tau}\right)+\left(\frac{x}{t-\tau}\right)(f^*)'\left(\frac{x}{t-\tau}\right) + g^*\left(-\frac{w_3(0)}{\tau}\right)
+\frac{w_3(0)}{\tau} (g^*)'\left(-\frac{w_3(0)}{\tau}\right).$$
Let $f'(p_1)=\frac{x}{t-\tau}$, $g'(q_1)=-\frac{w_3(0)}{\tau}$, then from the convexity of $f$ and $g$ and the above relation gives $$f\left((f^*)'\left(\frac{x}{t-\tau}\right)\right)=g\left((g^*)'\left(-\frac{w_3(0)}{\tau}\right)\right).$$
That is $f(p_1)=g(q_1).$ Observe that $(x,t)\in \Omega_3$ if and only if $x>\ga_2(t)=f'(a_2)(t-t_2).$ If $t-t_2\leq 0$, then $\tau<t\leq t_2$. Now $\ti{\ga}(t)=g'(b_2)(t-\tau)$ is the only characteristic of $u$ in $x<0$ and $\ti{\ga}(\tau)=0$. Since $w_3$ is a characteristic in $x<0$ and $w_3(\tau)=0$, hence $w_3(t)=g'(b_2)(t-\tau)$. This implies $\rho_0<w_3(0)<0$. 
If $t>t_2$, then $\frac{x}{t-t_2}>f'(a_2)$.
Suppose  $\tau>t_2$, then 
$\frac{x}{t-t_2}< \frac{x}{t-\tau}$ and we get
$$-\frac{w_3(0)}{\tau}=h_+\left(\frac{x}{t-\tau}\right)>h_+\left(\frac{x}{t-t_2}\right).$$
As $y\mapsto t_+(y,t)$ is an increasing continuous function, constant on the line $\ga_2(t)$ and hence for $x>\ga_2(t)$, it follows that $t_+(x,t)<t_2$. Due to $t>t_2$, we therefore have 
$$-\frac{w_3(0)}{\tau}>h_+\left(\frac{x}{t-t_2}\right)>h_+\left(\frac{x}{t-t_+(x,t)}\right)=-\frac{\rho_0}{t_+(x,t)}>-\frac{\rho_0}{t_2}>-\frac{\rho_0}{\tau}$$
and therefore $w_3(0)<\rho_0$. Since no two characteristic intersects, hence $\tau>t_1$ and $-\frac{w_3(0)}{\tau}=g'(b_1)$. Note that $x>\ga_2(t)=f'(a_2)(t-t_2),$ and subsequently, we obtain 
$$-\frac{w_3(0)}{\tau}=g'(b_1)>h_+\left(\frac{x}{t-t_2}\right)>h_+(f'(a_2))=g'(b_2)$$
and therefore $b_1>b_2$. But $b_1<b_2$ which is a contradiction. Hence $\tau<t_2$ and from the non intersecting of characteristics, it follows that $\rho_0<w_3(0)<0$ and $w_3(0)=-g'(b_2)\tau.$
Conversely if, $w=(w_1,\emptyset, w_3)\in ch(x,t)$ and $\rho_0<w_3(0)<0$ then $x>\ga_2(t)$. For let $w_3(\tau)=0$, then $w_3(0)=-g'(b_2)\tau$ and $-\frac{w_3(0)}{\tau}=g'(b_2)=h_+\left(\frac{x}{t-\tau}\right),$ implies that $\frac{x}{t-\tau}=f'(a_2).$ Thus $(x,t)\in \Omega_3$ and $u(x,t)=a_2$. Similarly $(x,t)\in \Omega_1$ if and only if $\forall w\in ch(x,t)$ with $w=(w_1,\emptyset,w_3)$, $w_3(0)<\rho_0$, $u(x,t)=a_1$. 
\par As a consequence of this if $\max\{\ga_1(t),0\}<x<\ga_2(t)$ and $w=(w_1,\emptyset, w_3)\in ch(x,t)$. Then $w_3(0)=\rho_0$ and $-\frac{\rho_0}{\tau}=h_+\left(\frac{x}{t-\tau}\right)$, where $w_3(\tau)=0$. Hence $\tau=t_+(x,t)$ and $t_2<t_+(x,t)<t_1$. Since $x\mapsto t_+(x,t)$ is an increasing function, hence at the point of differentiability of $t(\cdot, t)$, we have $u(x,t)=\frac{\partial}{\partial x} \Gamma_{v,w}(x,t)=(f')^{-1}\left(\frac{x}{t-t_+(x,t)}\right)$. This proves the lemma.

\end{proof}

Previous construction (as in lemma \ref{3.4} and \ref{3.5}) of solutions corresponds to the case when no reflected characteristic occurs. Now we deal with the case when reflected characteristics occur and the definition of $(\tau_0,\xi_0)$ (see subsection \ref{sec:reachable}) which is needed to define the reachable set $\mathcal{R}(T).$ We do it in the region $x>0$. Similar construction follows for $x<0$.

%
%

Let $\bar{\bar{\alpha}}\leq \theta_{f}\leq \bar{\alpha}$,
 such that
\begin{description}[font=\normalfont]
	\descitem {(i)}{i} $f(\bar{\bar{\alpha}})=f(\bar{\alpha})$.
	\descitem{(ii)}{ii}  $D=\{(R,T):\ T\geq 0, 0\leq R\leq f'(\bar{\al})T\}$ and for $(R_1,T)\in D$, define $0\leq T_1 \leq T$ by $f'(\bar{\al})=\frac{R_1}{T-T_1}.$
	\descitem {(iii)}{iii} $\xi_{1}=-f'(\bar{\bar{\alpha}})T_{1}$.
\end{description}
For $0\le\xi\le\xi_{1}$, define $\beta=\beta(\xi)$, $\tau=\tau(\xi)$ by,
\begin{description}[font=\normalfont]
	\descitem{(iv)}{iv} $\xi=-f'(\bar{\bar{\alpha}})\tau$.
	\descitem{(v)}{v} $f'(\beta)=\frac{R_{1}-\xi}{T}$.
\end{description}
In fact $f'(\bar{\bar{\alpha}})$ and $f'(\beta)$ are the inverse of the slopes of line joining between $(0,\tau)$, $(\xi,0)$ and $(R_{1},T)$, $(\xi,0)$ respectively. Since  $0\le\xi\le\xi_{1}$, we have
\begin{eqnarray*}
\left\{\begin{array}{ll} 
&\bar{\bar{\alpha}}\le \beta\le \bar{\alpha},\\
&\beta=\bar{\alpha} \mbox{ if and only if } T_{1}=0, \xi_{1}=0.
\end{array}\right.
\end{eqnarray*}
For $u\in\R$, define
\begin{eqnarray*}
L_{u}(t)&=&\xi+f'(u)t,\\
\Omega(\xi)&=&\{(x, t): t>0, L_{ \bar{\bar{\alpha}}}(t)<x<L_{\beta}(t)\},\\
v(x,t)&=&(f')^{-1}\left(\frac{x-\xi}{t}\right) \mbox{ for } (x,t)\in\Omega(\xi),
\end{eqnarray*}
the rarefaction wave in the $\Omega(\xi)$, which satisfies the equation 
\begin{equation*}
v_{t}+f(v)_{x}=0 \mbox{ in } \Omega(\xi).
\end{equation*}
Let $k\ge 1$, $\bar{\bar{\alpha}}=u_{0}<u_{1}<\cdots<u_{k}=\beta$ such that 
\begin{description}[font=\normalfont]
	\descitem {(vi)}{vi} $|u_{i+1}-u_{i}|\le \frac{\beta-\bar{\bar{\alpha}}}{k+1}$.
\end{description}
For $0\le i\le k$, define the lines passing through $(\xi,0)$ by 
\begin{description}[font=\normalfont]
	\descitem {(vii)}{vii} $l_{i}(t)=L_{u_{i}}(t)=\xi+f'(u_{i})t$.\\
	\descitem {(viii)}{viii} $v_{k}(x,t)=u_{i}$ if $l_{i-1}(t)\le x<l_{i}(t)$, $1\leq i \leq k$.
	\end{description}
Observe that for $l_{i-1}(t)\le x<l_{i}(t)$, $u_{i-1}\le (f')^{-1}\left(\frac{x-\xi}{t}\right)<u_{i}$ and hence, 
\begin{description}[font=\normalfont]
	\descitem {(ix)}{ix} $\left|v_{k}(x,t)-(f')^{-1}\left(\frac{x-\xi}{t}\right)\right|\le|u_{i}-u_{i-1}|\le \frac{\beta-\bar{\bar{\alpha}}}{k+1}$.\\
	\descitem {(x)}{x} $\lim\limits_{k\rr\f}v_{k}(x,t)=(f')^{-1}\left(\frac{x-\xi}{t}\right)$  uniformly in $\overline{\Omega}(\xi)$.
\end{description}
Then we have the following:
\begin{lemma}\label{3.6} 
	Let $T>0$, $\bar{\bar{\alpha}}\leq \theta_{f} \leq \bar{\alpha}$, $0\le T_{1}<T$, $\xi_{1}\geq 0$ and $L_{u}$ as defined above. Then for all $0\le\xi<\xi_{1}$, there exists a $0\le t_{0}(\xi)\le T$ and a Lipschitz curve $s_{\xi}:[t_{0}(\xi),T]\rr[0,T]$ (See figure \ref{fig-13} for illustration) such that 
	\begin{description}[font=\normalfont]
		\descitem{(1)}{1} $s_\xi(T)=R_1$ and   either $s_{\xi}(t_{0}(\xi))=L_{\bar{\bar{\al}}}(t_{0}(\xi))$ or $s_{\xi}(t_{0}(\xi))=0$ and  $L_{\bar{\bar{\al}}}(t_{0}(\xi))\leq 0$.
		\descitem{(2)}{2} $t\mapsto s_{\xi}(t)$ is a non decreasing convex function with 
		\begin{equation*}
		\frac{ds_{\xi}}{dt}=\frac{f(\bar{\alpha})-f\left((f')^{-1}\left(\frac{s_{\xi}(t)-\xi}{t}\right)\right)}{\bar{\alpha}-(f')^{-1}\left(\frac{s_{\xi}(t)-\xi}{t}\right)}.
		\end{equation*}
		\descitem  {(3)}{3} For $0\le \xi\le\eta\le\xi_{1}$, $s_{\xi}(t)\le s_{\eta}(t)$ if $t\in[t_{0}(\eta),T]$. Hence $s_\xi$ is unique. 
		\descitem {(4)}{4} $\xi\mapsto t_{0}(\xi)$ is continuous.
		\descitem {(5)}{5} For $(x,t)\in \Omega(\xi)$, let
				\begin{eqnarray}
	w_{\xi}(x,t)=\left\{\begin{array}{lll} 
		\bar{\alpha} &\mbox{ if }& x>s_{\xi}(t),\\
		(f')^{-1}\left(\frac{x-\xi}{t}\right) &\mbox{ if  }&x<s_{\xi}(t).
		\end{array}\right.
		\end{eqnarray}
Then $w_{\xi}$ is an entropy solution of 
\begin{equation*}
u_{t}+f(u)_{x}=0 \mbox{ in } \Omega(\xi).
\end{equation*}
		\descitem {(6)}{6} There exists $(\xi_{0}, \tau_{0})$ such that $\xi_{0}=-f'(\bar{\bar{\alpha}})\tau_{0}$, $t_{0}(\xi_0)=\tau_{0}$ and $s_{\xi_0}(\tau_0)=0$.
		
\descitem {(7)}{7} For $(R_1,T)\in D$, denote $\xi_0=\xi_0(R_1,T)$, $\tau_0=\tau_0(R_1,T)$, $s_{\xi_0}=s_{\xi_0(R_1,T)}$ be as in \descref{6}{(6)}. Then $(R_1,T)\rr (\xi_0(R_1,T), \tau_0(R_1,T), s_{\xi_{0}(R_1,T)})$ is continuous with 
\begin{enumerate}[(i)]
\item $\tau_0(f'(\bar{\al})T,T)=\xi_0(f'(\bar{\al})T,T)=0$. \\
$s_{\xi_0(f'(\bar{\al})T,T)}(t)=f'(\bar{\al})t$ for $0\leq t \leq T.$
\item $\tau_0(0,T)=T, \xi_0(0,T)=-f'(\bar{\bar{\al}})\tau_0(0,T)$, $s_{\xi_0(0,T)}\equiv 0.$
\end{enumerate}

	\end{description}
	\end{lemma}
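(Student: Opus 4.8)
The plan is to realise $s_\xi$ as the Rankine--Hugoniot shock separating the rarefaction $v(x,t)=(f')^{-1}\!\big(\tfrac{x-\xi}{t}\big)$ that fills $\Omega(\xi)$ from the constant state $\bar\alpha$ on its right, anchored at the terminal datum $s_\xi(T)=R_1=L_\beta(T)$, and to deduce every assertion from the defining speed
\[
\sigma(u)=\frac{f(\bar\alpha)-f(u)}{\bar\alpha-u},\qquad \bar{\bar\alpha}\le u\le\bar\alpha.
\]
Because $f$ is only $C^1$, the right-hand side of the ODE in part~(2) (namely $\sigma$ evaluated at $(f')^{-1}(\tfrac{s-\xi}{t})$) need not be Lipschitz, so I would not integrate it directly; instead I would use the piecewise-constant approximation $v_k$, $\bar{\bar\alpha}=u_0<\dots<u_k=\beta$, introduced before the lemma. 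For each $k$ I build a polygonal curve $s^k_\xi$ by integrating backward from $(R_1,T)$ and solving, in turn, the Riemann problems met as the curve crosses the rays $l_i(t)=\xi+f'(u_i)t$: on the segment where the left state is $u_i$ the slope is $\sigma(u_i)$. Since $f(\bar\alpha)=f(\bar{\bar\alpha})$ and $f$ is convex, $\sigma$ is non-decreasing on $[\bar{\bar\alpha},\bar\alpha]$ with range $[0,f'(\bar\alpha)]$; this gives the uniform Lipschitz bound and shows that, as the curve moves forward in $t$ through increasing values $u_i$, its slopes increase, so each $s^k_\xi$ is non-decreasing and convex. By Arzel\`a--Ascoli the $s^k_\xi$ converge uniformly to a Lipschitz limit $s_\xi$, and the uniform convergence $v_k\to v$ on $\overline\Omega(\xi)$ (item~(x) above) lets the defining relation pass to the limit, so $s_\xi$ solves part~(2); this establishes existence together with part~(2).

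For part~(1) I continue the backward integration and set $t_0(\xi)$ to be the first time the curve meets either the left edge $L_{\bar{\bar\alpha}}$ or the axis $\{x=0\}$; at $L_{\bar{\bar\alpha}}$ the value is $\bar{\bar\alpha}$ and $\sigma(\bar{\bar\alpha})=0$, consistently stopping the curve. The monotonicity in part~(3) I would get by a no-second-crossing argument: at any point where $s_\xi$ and $s_\eta$ ($\xi\le\eta$) coincide, the smaller shift gives the larger rarefaction value there, $(f')^{-1}(\tfrac{s-\xi}{t})\ge (f')^{-1}(\tfrac{s-\eta}{t})$, hence $\dot s_\xi\ge\dot s_\eta$; since the two curves agree at $t=T$ this forces $s_\xi\le s_\eta$ on $[t_0(\eta),T]$. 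Uniqueness for a fixed $\xi$ then follows because the speed field $s\mapsto\sigma((f')^{-1}(\tfrac{s-\xi}{t}))$ is non-decreasing in $s$, which yields backward uniqueness for the ODE (the difference of two solutions cannot vanish at $T$ unless it vanishes for all $t\le T$). Continuity of $\xi\mapsto t_0(\xi)$ in part~(4) is continuous dependence of the backward solution and of its first hitting time on the parameter $\xi$. For part~(5), the Rankine--Hugoniot relation across $s_\xi$ is precisely part~(2), $w_\xi$ solves the equation classically off $s_\xi$, and it is the limit of the glued solutions $w^k_\xi$; Lemma~\ref{stability} together with item~(x) then identifies $w_\xi$ as the entropy solution in $\Omega(\xi)$.

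The crux is part~(6). Writing $\tau(\xi)=\xi/|f'(\bar{\bar\alpha})|$ for the time at which $L_{\bar{\bar\alpha}}$ reaches the interface, the two alternatives of part~(1) are separated exactly by the sign of $t_0(\xi)-\tau(\xi)$: if the backward curve meets $L_{\bar{\bar\alpha}}$ at a point with $x\ge0$ then $L_{\bar{\bar\alpha}}(t_0)\ge0$, i.e. $t_0(\xi)\le\tau(\xi)$, whereas if it reaches the axis first then $L_{\bar{\bar\alpha}}(t_0)\le0$, i.e. $t_0(\xi)\ge\tau(\xi)$. At $\xi=0$ the edge lies in $\{x<0\}$ for $t>0$, so $\tau(0)=0<t_0(0)$, while at $\xi=\xi_1$ the curve terminates on $L_{\bar{\bar\alpha}}$ at a non-negative abscissa, giving $t_0(\xi_1)\le\tau(\xi_1)$. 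Thus $\xi\mapsto t_0(\xi)-\tau(\xi)$ is continuous (by part~(4)), positive at $0$ and non-positive at $\xi_1$, and the intermediate value theorem yields $\xi_0$ with $t_0(\xi_0)=\tau(\xi_0)=:\tau_0$ and $s_{\xi_0}(\tau_0)=0=L_{\bar{\bar\alpha}}(\tau_0)$, together with $\xi_0=-f'(\bar{\bar\alpha})\tau_0$; this is exactly part~(6).

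Finally, part~(7) follows by propagating continuous dependence on $(R_1,T)$ through the entire construction (the data $T_1,\xi_1,\beta$ depend continuously on $(R_1,T)$, whence so do $s^k_\xi$, their limit, $t_0(\xi)$ and the root $\xi_0$), and by evaluating the two boundary cases directly: $R_1=f'(\bar\alpha)T$ forces $T-T_1=R_1/f'(\bar\alpha)=T$, so $T_1=0$, $\xi_1=0$, hence $\xi_0=\tau_0=0$, $\beta=\bar\alpha$ and $s_{\xi_0}(t)=f'(\bar\alpha)t$; while $R_1\to0$ forces $T_1\to T$, hence $\tau_0=T$, $\xi_0=-f'(\bar{\bar\alpha})T$ and $s_{\xi_0}\equiv0$. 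I expect the two genuine difficulties to be the limit $k\to\infty$, which must upgrade the polygonal curves to a bona fide solution of the non-Lipschitz ODE in part~(2) while preserving convexity and the hitting behaviour, and the sign-change step in part~(6), where the endpoint alternatives at $\xi=0$ and $\xi=\xi_1$ must be pinned down carefully so that the intermediate value theorem applies.
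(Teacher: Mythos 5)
Your proposal is, in all essentials, the paper's own proof: the same discretization $\bar{\bar{\alpha}}=u_0<\cdots<u_k=\beta$ of the fan, the same polygonal curves built backward from $(R_1,T)$ with Rankine--Hugoniot speeds $\frac{f(\bar{\alpha})-f(u_i)}{\bar{\alpha}-u_i}$ and breakpoints on the rays $l_i$, Arzel\`a--Ascoli for the limit and convexity of $s_\xi$, the speed-comparison (no-crossing) argument for part (3) and uniqueness, the Rankine--Hugoniot identification for part (5), and the intermediate value theorem applied to $\xi\mapsto t_0(\xi)+\xi/f'(\bar{\bar{\alpha}})$ for part (6).

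The one genuine gap is in part (7)(ii). You argue that ``$R_1\to 0$ forces $T_1\to T$, hence $\tau_0=T$.'' This is a non sequitur: since $\xi_0\le\xi_1$ one has $\tau_0=-\xi_0/f'(\bar{\bar{\alpha}})\le-\xi_1/f'(\bar{\bar{\alpha}})=T_1$, so $T_1=T$ only yields the upper bound $\tau_0\le T$, and nothing in your argument excludes $\tau_0<T$. The paper closes this by convexity: if $\tau_0=\tau_0(0,T)<T$, then $s_{\xi_0}$ is convex on $[\tau_0,T]$ and vanishes at both endpoints (by part (6) and $s_{\xi_0}(T)=R_1=0$), hence $s_{\xi_0}\equiv 0$ there; but integrating the equation of part (2) then gives
\begin{equation*}
0=s_{\xi_0}(T)-s_{\xi_0}(\tau_0)=\int_{\tau_0}^{T}\frac{f(\bar{\alpha})-f\left((f')^{-1}\left(-\frac{\xi_0}{t}\right)\right)}{\bar{\alpha}-(f')^{-1}\left(-\frac{\xi_0}{t}\right)}\,dt>0,
\end{equation*}
because $\bar{\bar{\alpha}}<(f')^{-1}(-\xi_0/t)\le\theta_f<\bar{\alpha}$ for $t\in(\tau_0,T)$, where the integrand is strictly positive ($f$ strictly convex and $f(\bar{\bar{\alpha}})=f(\bar{\alpha})$); this contradiction forces $\tau_0=T$. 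You need this, or an equivalent step, to close (7)(ii). A smaller remark: at the endpoint $\xi=\xi_1$ of your intermediate-value argument in part (6) you assert, as does the paper, that the backward curve terminates on $L_{\bar{\bar{\alpha}}}$ at non-negative abscissa, i.e.\ $t_0(\xi_1)\le T_1$; this deserves a reason, and it follows from the speed bound $\frac{ds_\xi}{dt}\le f'(\bar{\alpha})$, which yields $s_\xi(t)\ge R_1+f'(\bar{\alpha})(t-T)>0$ for $T_1<t<T$, so the curve cannot reach $x=0$ at any time exceeding $T_1$.
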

\begin{figure}
	\centering
	\begin{tikzpicture}
	\draw (-3,0)--(7,0);
	\draw (-3,6)--(7,6);
	\draw (0,0)--(0,6);
	
	\draw[red] (0,4)--(4.5,6);
	\draw[red](0,4)--(0,1.7);
	\draw (1.5,0)--(-2.5,4.55);
	\draw (1.5,0)--(4.5,6);
	\draw (1.5,0)--(1.9,1.3);
	\draw (1.5,0)--(1.6,1.3);
	\draw (1.5,0)--(1.3,1.3);
	\draw (1.5,0)--(1,1.3);
	\draw (1.5,0)--(.7,1.3);
	\draw (1.9,1.9)node{\tiny $\bullet$};
	\draw (0,1.7) .. controls (.3,4) and (3,5) .. (4.5,6);
	\draw (1.9,1.9)[dashed] .. controls (2.5,4) and (3,5) .. (4.5,6);
	\draw [dashed](3.529,0)--(0,4);
	\coordinate (P) at (1.3,2);
	\draw (P) node[rotate=-47] (N){\tiny $(f')^{-1}\left(\frac{x-\xi_{0}}{t}\right)$};
	\draw[red](1.5,0)--(0,1.7);
	\draw (0,-.2)node{\scriptsize $x=0$};
	\draw (0,0)node{\tiny $\bullet$};
	\draw (1.5,-.2)node{\scriptsize $(\xi_{0}, 0)$};
	\draw (1.5,0)node{\tiny $\bullet$};
	\draw (3.529,-.2)node{\scriptsize $(\xi_{1}, 0)$};
	\draw (3.529,0)node{\tiny $\bullet$};
	\draw (1.5,0) node{\tiny $\bullet$};
	\draw (0,0) node{\tiny $\bullet$};
	\draw (-.45,1.6)node{\scriptsize $(0, \tau_{0})$};
	\draw (0,1.7)node{\tiny $\bullet$};
	\draw (-.5,4)node{\scriptsize $(0, T_{1})$};
	\draw (0,4) node{\tiny $\bullet$};
	\coordinate (P) at (-1.5,3.15);
	\draw (P) node[rotate=-47] (N) {\tiny $f'(\bar{\bar{\alpha}})(t-\tau_{0})$};
	\draw (-.7,3)node{\scriptsize $\Omega(\xi)$};
	\draw (4.5,6.25)node{\scriptsize $(R_{1}, T)$};
	\draw (1,4)node{\scriptsize $\bar{\alpha}$};
	\draw (1,5.4)node{\scriptsize $\bar{\alpha}$};
	\draw (.3,3.3)node{\scriptsize $\bar{\alpha}$};
	\draw[<-](2.1,4.5)--(4,4.5);
	\draw (4.4, 4.5)node{\scriptsize $s_{\xi_{0}}(t)$};
	\draw[<-](2.55,3.7)--(4,3.7);
	\draw (4.4, 3.7)node{\scriptsize $s_{\xi_{1}}(t)$};
	\draw (4.5,6) node{\tiny $\bullet$};
	\coordinate (P) at (3,1);
	\draw (P) node[rotate=-45] (N) {\tiny $f'(\bar{\bar{\alpha}})(t-T_{1})$};
	\coordinate (P) at (1.5,4.9);
	\draw (P) node[rotate=20] (N) {\tiny $R_{1}+f'(\bar{\alpha})(t-T)$};
	\end{tikzpicture}
	\caption{This figure illustrates construction of the curve $s_\xi$.}\label{fig-13}
\end{figure}
\begin{proof}
	If $T_{1}=0$, then $\xi_{1}=0$ and take $s_\xi(t)=L_{\bar{\al}}(t)$. Let $T_{1}>0$, $0\le\xi\le\xi_{1}$, $\beta$ and $\tau$ be as in \descref{iv}{(iv)} and \descref{v}{(v)}. Let $k\ge 1$ and $\bar{\bar{\alpha}}=u_{0}<u_{1}<\cdots<u_{k}=\beta$ be a discretization of $[\bar{\bar{\alpha}}, \beta]$ satisfying \descref{vi}{(vi)}. Let $l_{i}$ and $v_{k}$ be defined as in \descref{vii}{(vii)} and \descref{viii}{(viii)}. Define $t_{p}<t_{1}<\cdots<t_{k}=T$ and $s_{k}$ inductively by 
	\begin{description}[font=\normalfont]
		\descitem {($a_{1}$)}{a1} $s_{k}(t_{k})=s_{k}(T)=R_1$.
		\descitem {($a_{2}$)}{a2} $s_{k}$ is linear in $[t_{i-1}, t_{i}]$ and for $t\in (t_{i-1},t_{i})$,
		 \begin{eqnarray*}
		 	\frac{ds_{k}}{dt}&=&\frac{f(\bar{\alpha})-f\left(u_{i}\right)}{\bar{\alpha}-u_{i}},\\
		s_{k}(t_{i-1})&=& l_{i-1}(t_{i-1}).
		 \end{eqnarray*}
		 \descitem {($a_{3}$)}{a3} Either $s_{k}(t_{p})=0$ or if $s_{k}(t_{p})>0$, then $p=0$ and  $l_{0}(t_{p})=s_{k}(t_{p})$. 
	\end{description}
 From the convexity, we prove this by induction on $i$. As $\bar{\bar{\alpha}}\le \beta<\bar{\alpha}$, by the convexity of $f$, we have
 \begin{equation*}
 f'(\bar{\bar{\alpha}})\le f'(\beta)<\frac{f(\bar{\alpha})-f\left(\beta\right)}{\bar{\alpha}-\beta}<f'(\bar{\alpha}).
 \end{equation*}
 Hence integrating from $\T$ to $T$ to obtain
 \begin{eqnarray*}
 R_{1}+f'(\bar{\alpha})(\theta-T)&\le& R_{1}+\frac{f(\bar{\alpha})-f\left(\beta\right)}{\bar{\alpha}-\beta}(\theta-T),\\
 &\le&R_{1}+f'(\beta)(\theta-T).
 \end{eqnarray*}
 Now choose $t_{k-1}$ by 
 \begin{eqnarray*}
 R_{1}+\frac{f(\bar{\alpha})-f\left(\beta\right)}{\bar{\alpha}-\beta}(t_{k-1}-T)&=&s_{k}(t_{k-1})=l_{k-1}(t_{k-1})
 =\xi+f'(u_{k-1})t_{k-1}.
 \end{eqnarray*}
 Since $R_{1}-\xi=Tf'(\beta)$ and $u_{k-1}<u_{k}=\beta$ to obtain
 \begin{equation*}
 t_{k-1}=T\frac{\left(\frac{f(\bar{\alpha})-f\left(u_{k}\right)}{\bar{\alpha}-u_{k}}-f'(\beta)\right)}{\left(\frac{f(\bar{\alpha})-f\left(u_{k}\right)}{\bar{\alpha}-u_{k}}-f'(u_{k-1})\right)}>0
 \end{equation*}
 and $t_{k-1}<T$ since $u_{k-1}<u_{k}$. For $t\in (t_{k-1}, t_k)$, define 
 $s_k(t)=R_1+\frac{f(\bar{\al})-f(u_k)}{\bar{\al}-u_k}(t-T).$
  Now by induction on $i$ \descref{a1}{($a_{1}$)}, \descref{a2}{$(a_{2})$} and \descref{a3}{$(a_{3})$} holds.
 
 Let $v_{k}(x,t)$ be as in \descref{viii}{(viii)}, then for $t\notin\{t_{p}, t_{1}, \cdots, t_{k}\}$,
 \begin{equation*}
 	\frac{ds_{k}}{dt}=\frac{f(\bar{\alpha})-f\left(v_{k}(s_{k}(t),t)\right)}{\bar{\alpha}-v_{k}(s_{k}(t),t)}.
 \end{equation*}
 Hence
 \begin{equation*}
 s_{k}(t)=s_{k}(T)+\int_{t}^{T}\left(\frac{f(\bar{\alpha})-f\left(v_{k}(s_{k}(\T),\T)\right)}{\bar{\alpha}-v_{k}(s_{k}(\T),\T)}\right)d\theta.
 \end{equation*}
 From Arzela-Ascoli, we can find a subsequence still denoted by $\{s_{\xi_k}\}$ such that 
 $s_{\xi_{k}}\rr s_{\xi}$ uniformly and $t_{p}(\xi_{k})\rr t_{0}(\xi)$ as $k\rr\f$.
 Since $s_k$ is convex for each $k$, hence $s_\xi$ is convex and satisfies \descref{1}{(1)} and \descref{2}{(2)}. 
 
	Let $0\le\xi\le\eta\le\xi_{1}$. Since $s_{\xi}(T)=s_{\eta}(T)=R_{1}$, hence if \descref{3}{(3)} is not true, then there exists $a<b\le T$ such that 
		\begin{eqnarray*}
	\left\{\begin{array}{lll} 
	s_{\eta}(t)&<&s_{\xi}(t), \mbox{ for all } t\in (a,b),\\
	s_{\eta}(b)&=&s_{\xi}(b).
	\end{array}\right.
	\end{eqnarray*}
	Now $\xi\le\eta$ and hence $-\eta\le-\xi$ and for $ t\in (a,b),$
	\begin{equation*}
	(f')^{-1}\left(\frac{s_{\eta}(t)-\eta}{t}\right)<(f')^{-1}\left(\frac{s_{\xi}(t)-\xi}{t}\right)\le \bar{\alpha}.
	\end{equation*}
	Therefore by the convexity of $f$, we have for $t\in(a,b)$,
	\begin{equation*}
	\frac{ds_{\eta}(t)}{dt}<\frac{ds_{\xi}(t)}{dt}.
	\end{equation*}
	Integrating from $t$ to $b$ in $(a,b)$ to obtain
	\begin{equation*}
	s_{\eta}(b)-s_{\eta}(t)<s_{\xi}(b)-s_{\xi}(t)
	\end{equation*}
		and hence $s_{\xi}(t)<s_{\eta}(t)<s_{\xi}(t)$ which is a contradiction. Thus, $s_{\xi}(t)\le s_{\eta}(t)$ for all $t\in[t_{0}(\xi),T]$. This also shows that $s_{\xi}$ satisfying \descref{a1}{$(a_{1})$} and \descref{a2}{$(a_{2})$} is unique. 
 This proves \descref{3}{(3)}. From the uniqueness of $s_\xi$, $\xi\mapsto t_0(\xi)$ is continuous and hence \descref{4}{(4)} follows.   From Rankine-Hugoniot condition, $w_{\xi}$ is an entropy solution in $\Omega(\xi)$. This proves \descref{5}{(5)}.

Let $h(\xi)=-\frac{\xi}{f'(\bar{\bar{\al}})}-t_0(\xi)$, then we have $h(0)=-t_0(0)\leq 0$ and $h(\xi_1)=-\frac{\xi_1}{f'(\bar{\bar{\al}})}-t_0(\xi_1)=T_1-t_0(\xi_1)\geq 0$. Therefore there exists $(\xi_0,\tau_0)$ with $t_0(\xi_0)=\tau_0(\xi_0)=\tau_0$ such that $L_{\bar{\bar{\al}}}(\tau_0)=0$ and $\xi_0=-f'(\bar{\bar{\al}})\tau_0$. This proves \descref{6}{(6)}. From the uniqueness of $s_{\xi_0(R_1,T)}$, it follows that $(R_1,T)\rr (\xi_0(R_1,T), \tau_0(R_1,T), s_{\xi_0(R_1,T)})$ is continuous in $D$. 
Suppose $\tau_0=\tau_0(0,T)<T$, then $s_{\xi_0(0,T)}(\tau_0)=s_{\xi_0(0,T)}(T)=0$ and $s_{\xi_0(0,T)}$ is convex, hence $s_{\xi_0(0,T)}\equiv 0$. Integrating $\frac{d s_{\xi}}{dt}$ from $\tau_0$ to $T$ to obtain with $\xi_0(0,T)=\xi_0$, we have 
$$0=s_{\xi_0}(T)-s_{\xi_0}(\tau_0)=\int\limits_{\tau_0}^T\frac{f(\bar{\al})-f\left((f')^{-1}\left(-\frac{\xi_0}{t}\right)\right)}{\bar{\al}-(f')^{-1}\left(-\frac{\xi_0}{t}\right)}dt\neq 0,$$
since $(f')^{-1}\left(-\frac{\xi_0}{t}\right)\leq (f')^{-1}(0)\leq \bar{\al}$ and $f$ is convex. This is a contradiction and we get $\tau_0=T$.
\par If $(R_1,T)=(f'(\bar{\al})T,T)$, then $T_1=0$ and hence $\tau_0(R_1,T)=0$, $\xi_0(R_1,T)=0$. Therefore by uniqueness, $s_{\xi_0(R_1,T)}(t)=f'(\bar{\al})t$ is the given solution. This proves \descref{7}{(7)} and hence the lemma.
%
  %
\end{proof}
%
%
\begin{lemma}\label{lemma3.6}
Let $u_0\in L^\f(\R)$ and $u$ be the corresponding solution of (\ref{conlaw-equation}). Let $0\leq T_1 \leq T$ be such that $f'(\bar{\T}_g)=\frac{R_1(T)}{T-T_1}.$ Let $\bar{\al}=\bar{\T_g}$ and $(\xi_0,\tau_0,s_{\xi_0})$ be as in lemma \ref{3.6} at $(R_1(T),T)$. Then 
$$\xi_0\leq y(R_1(T)+,T).$$
\end{lemma}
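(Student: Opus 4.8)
The plan is to argue by the non-intersection of characteristics, \descref{t1item1}{(1)} of \cref{theorem3.1}, using the reflected characteristic produced by \cref{3.6} as a barrier that no simple characteristic lying to the right of $R_1(T)$ can cross. Throughout write $y^{+}=y(R_1(T)+,T)$, $\bar{\al}=\bar{\T}_g$ and $\bar{\bar{\al}}=\bar{\bar{\T}}_g$, so that $f(\bar{\bar{\al}})=f(\bar{\al})=g(\T_g)$. I would first dispose of the degenerate case. Since every $\gamma\in ch(x,T)$ with $x>R_1(T)$ lies in $c_0(x,T)$ it satisfies $\gamma(\theta)\ge 0$ for all $\theta$, whence $y(x,T)\ge 0$ and $y^{+}\ge 0$. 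If $T_1=0$ then $\xi_1=0$, so $\xi_0=0\le y^{+}$ and nothing remains; hence I may assume $T_1>0$, so that $\xi_0>0$ and $\tau_0=-\xi_0/f'(\bar{\bar{\al}})>0$.

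Next I would identify the reflected characteristic attached to $(R_1(T),T)$. For $x\in(R_2(T),R_1(T))$ the set $ch(x,T)$ contains a reflected curve $\eta_x$; by \descref{iii.}{(iii)} and \descref{ix.}{(ix)} of \cref{theorem3.1} its terminal segment carries the value $\bar{\al}$ and leaves the interface at $(0,t_{+}(x,T))$ with $t_{+}(x,T)=T-x/f'(\bar{\al})\to T_1$ as $x\uparrow R_1(T)$. The Rankine--Hugoniot and interface conditions \descref{viii.}{(viii)}, together with minimization over the entry time, force the initial segment of $\eta_x$ to carry the value $\bar{\bar{\al}}$, the unique value with $f(\bar{\bar{\al}})=g(\T_g)$ and $\bar{\bar{\al}}\le\T_f$; thus it is the ray $\theta\mapsto \xi'(x)+f'(\bar{\bar{\al}})\theta$ meeting the interface at time $t_1(x)=\xi'(x)/(-f'(\bar{\bar{\al}}))$. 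Approximating $u_0$ by smooth data and invoking \cref{stability} together with the uniqueness and continuity in \descref{3}{(3)}--\descref{4}{(4)} of \cref{3.6}, I expect these initial rays to converge, as $x\uparrow R_1(T)$, to the segment joining $(\xi_0,0)$ to $(0,\tau_0)$ supplied by \descref{6}{(6)} of \cref{3.6}; equivalently $\xi'(x)\to\xi_0$.

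Granting this, I would suppose for contradiction that $y^{+}<\xi_0$. Then I can pick $x_2>R_1(T)$ so close to $R_1(T)$ that $0\le y(x_2,T)<\xi_0$, and set $S(\theta)=y(x_2,T)+\frac{x_2-y(x_2,T)}{T}\,\theta$, the simple characteristic from $(y(x_2,T),0)$ to $(x_2,T)$. I then pick $x_1\in(R_2(T),R_1(T))$ so close to $R_1(T)$ that $\xi'(x_1)>y(x_2,T)$, and let $F(\theta)=\xi'(x_1)+f'(\bar{\bar{\al}})\theta$ be the initial ray of $\eta_{x_1}$ on $[0,t_1(x_1)]$, which decreases from $\xi'(x_1)$ down to $0$. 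Since $S(0)=y(x_2,T)<\xi'(x_1)=F(0)$ while $S(t_1(x_1))>0=F(t_1(x_1))$, continuity gives $\theta^{*}\in(0,t_1(x_1))$ with $S(\theta^{*})=F(\theta^{*})=x^{*}\in(0,\xi'(x_1))$. This exhibits two distinct characteristics of $u$ meeting at $(x^{*},\theta^{*})$ with $x^{*}\neq 0$ and $\theta^{*}\in(0,T)$, contradicting \descref{t1item1}{(1)} of \cref{theorem3.1}. Therefore $\xi_0\le y^{+}$.

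The only delicate step is the identification in the second paragraph: passing from the purely geometric triple $(\xi_0,\tau_0,s_{\xi_0})$ of \cref{3.6} to a statement about the minimizing characteristics of the actual solution, namely that the initial rays of the $\eta_x$ really converge to the line from $(\xi_0,0)$ to $(0,\tau_0)$. This is where \cref{stability}, the strict monotonicity of $t_{+}$ (\cref{lemma4.3}), and the uniqueness and continuity of $s_\xi$ in \cref{3.6} have to be combined carefully; once it is in hand, the crossing argument above is elementary.
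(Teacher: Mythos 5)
Your final crossing argument is sound, and so are the structural facts you extract at the start of your second paragraph: for $x\in(R_2(T),R_1(T))$ a genuine three-segment minimizer has terminal slope $f'(\bar{\theta}_g)$, and the interior first-order condition in the interface-exit time forces its initial segment to have slope $f'(\bar{\bar{\theta}}_g)$. The gap is exactly the identification step you flag as ``delicate'': the claim that the initial rays of the \emph{actual} solution's reflected characteristics converge to the segment joining $(\xi_0,0)$ to $(0,\tau_0)$, i.e.\ $\xi'(x)\to\xi_0$. None of the tools you cite can deliver this. Lemma \ref{3.6} is a purely geometric construction depending only on $(R_1(T),T)$ and the fluxes; it says nothing about the minimizers of $u$. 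Lemma \ref{stability} compares $u$ with approximations of the \emph{same} solution, so it transports information from $ch_k(x,T)$ to $ch(x,T)$, never from the auxiliary triple $(\xi_0,\tau_0,s_{\xi_0})$ to $ch(x,T)$. Moreover the claim is circular relative to your own scheme: under the contradiction hypothesis $y^+<\xi_0$, the non-intersection property \descref{t1item1}{(1)}, read in the opposite direction, forces every initial ray to satisfy $\xi'(x_1)\le y(x_2,T)<\xi_0$; so ``the rays reach up to $\xi_0$'' is precisely the statement in dispute, not an input you may assume.

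In fact the identification fails in general, because genuine reflected characteristics need not exist at all even when $T_1>0$ and $\xi_0>0$. Take $u_0=\theta_g$ for $x<0$, $u_0=\bar{\theta}_g$ on $(0,a)$ and $u_0=\beta$ on $(a,\infty)$ with $\theta_f<\beta<\bar{\theta}_g$. The solution is stationary at the interface together with a single shock of speed $s=\frac{f(\bar{\theta}_g)-f(\beta)}{\bar{\theta}_g-\beta}<f'(\bar{\theta}_g)$ issuing from $(a,0)$. For $T$ large enough that $(f'(\bar{\theta}_g)-s)T>a$ one gets $R_1(T)=a+sT<f'(\bar{\theta}_g)T$, hence $T_1>0$ and $\xi_0>0$; yet every minimizer from $(x,T)$ with $0<x<R_1(T)$ runs down the interface to the origin (a two-segment curve in $c_b$: the interior condition $u_0(\xi')=\bar{\bar{\theta}}_g$ has no solution for this data), so $R_2(T)=R_1(T)$ and there is no ray with $\xi'>0$ for your argument to intersect, while the lemma's inequality is still a nontrivial statement for this solution. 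This is why the paper argues differently: it uses $s_{\xi_0}$ not as a limit of characteristics of $u$ but as a \emph{barrier} for the curve $R_1(t)$. Concretely, it first shows $u\ge\bar{\theta}_g$ a.e.\ on $(0,R_1(t))$; then, assuming $y(R_1(T)+,T)<\xi_0$, a Rankine--Hugoniot speed comparison between $\frac{dR_1}{dt}$ and $\frac{ds_{\xi_0}}{dt}$ gives $R_1(t)\le s_{\xi_0}(t)$ on $[\tau_0,T]$, hence $R_1(\tau_0)=0$; the continuity statement \descref{7}{(7)} of lemma \ref{3.6} upgrades this to $R_1\equiv 0$ on a left neighborhood of $\tau_0$; and finally the interface Rankine--Hugoniot and entropy conditions there force $u(0+,t)\le\bar{\bar{\theta}}_g$, whence $y(0+,\tau_0+)\ge -\tau_0 f'(\bar{\bar{\theta}}_g)=\xi_0$, contradicting $y(0+,\tau_0+)\le y(R_1(T)+,T)<\xi_0$. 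Some comparison argument of this kind is unavoidable; the reflected characteristics alone do not carry the required information.
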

\begin{proof}
First assume that $R_1(T)>0$. Then $f'(\bar{\T}_g)>0.$ Suppose $T_1=0$, then from lemma \ref{3.6}, $\xi_0=0$ and hence the lemma is true. Therefore we assume that $R_1(T)>0, T_1>0$ and $y(R_1(T)+,T)<\xi_0$. 

\begin{description}[font=\normalfont]
	\descitem{Step-1:}{3.6-step-1}
 For a.e. $x\in (0,R_1(t)),$ and $t\in(0,T]$, $u(x,t)\geq \bar{\T}_g$.\\ Suppose $R_2(t)<x<R_1(t)$, then from (iii) and (ix) of theorem \ref{theorem3.1}, we have for a.e. $x$, 
\begin{eqnarray*}
f(u(x,t))&=&g(\T_g),\\
f'(u(x,t))&=&\frac{x}{t-t_+(x,t)}\geq 0.
\end{eqnarray*}
Hence $u(x,t)=\bar{\T}_g$.  Suppose $0<x<R_2(t)$, let $\ga\in ch(x,t)$ such that $\ga=(\ga_1,\phi, \ga_3)$, $\dot{\ga}=(p_1,\phi, p_3)$ and $p_1\geq 0$, $p_3\geq 0$. From (\ref{eq4.6}) we have $g(g^{*{'}}(p_3))=f(f^{*{'}}(p_1)).$ Therefore $f^{*{'}}(p_1)\geq \bar{\T}_g$. Since for a.e. $x\in (0,R_2(T))$, $p_1=\frac{x}{t-t_+(x,t)},$ hence from \descref{ix.}{(ix)} of theorem \ref{theorem3.1}, we have $u(x,t)=(f^*)'\left(\frac{x}{t-t_+(x,t)}\right)\geq \bar{\T}_g$. This proves \descref{3.6-step-1}{Step-1}. 

\descitem{Step-2:}{3.6-step-2} For all $t\in [\tau_0,T], R_1(t)\leq s_{\xi_0}(t).$\\
Suppose not, since $R_1(T)=s_{\xi_0}(T)$, there exist $a<b$ such that for $t\in (a,b)$, we have 
$$s_{\xi_0}(t)<R_1(t),\ s_{\xi_0}(b)=R_1(b).$$
From the non-intersecting of characteristics, it follows that for $t<T$, $y(R_1(t)+,t)\leq y(R_1(T)+,T)<\xi_0$. Hence for $t\in (a,b)$, we have 
$$\frac{R_1(t)-y(R_1(t)+,t)}{t}>\frac{s_{\xi_0}(t)-\xi_0}{t}.$$
From \descref{3.6-step-1}{Step-1} and  convexity of $f$, we have for a.e., $t\in (a,b)$, 
\begin{eqnarray*}
\frac{d R_1}{d t}&=& \frac{f(u(R_1(t)-,t))-f\left((f')^{-1}\left(\frac{R_1(t)-y(R_1(t)+,t)}{t}\right)\right)}{u(R_1(t)-,t)-(f')^{-1}\left(\frac{R_1(t)-y(R_1(t)+,t)}{t}\right)}\\
&\geq & \frac{f(\bar{\T}_g)-f\left((f')^{-1}\left(\frac{s_{\xi_0}(t)-\xi_0}{t}\right)\right)}{\bar{\T}_g-(f')^{-1}\left(\frac{s_{\xi_0}(t)-\xi_0}{t}\right)}\\
&=& \frac{d s_{\xi_0}}{dt}. 
\end{eqnarray*}
Integrating from $t$ to $b$ to obtain 
$$R_1(b)-R_1(t)\geq s_{\xi_0}(b)-s_{\xi_0}(t).$$
Hence for $t\in (a,b)$, $s_{\xi_0}(t)\geq R_1(t)>s_{\xi_0}(t)$ which is a contradiction. This proves  \descref{3.6-step-2}{Step-2}. From \descref{3.6-step-2}{Step-2}, we have 
$$R_1(\tau_0)=0, y(0+,\tau_0)\leq y(R_1(T)+, T)<\xi_0.$$
\descitem{Step-3:}{3.6-step-3} There exists an $\e_0>0$ such that for all $t\in (\tau_0-\e_0,\tau_0)$, $R_1(t)=0$.\\
 Since $\xi_0(0,\tau_0)=\xi_0$, $\tau_0(0,\tau_0)=\tau_0$, hence by continuity, there exist an $\e_1>0$ such that $\forall (y,t)\in \Omega=\{(\xi,s):\ \xi\geq 0, s\geq 0\} \cap B((0,\tau_0),\e_1)$, we have 
\begin{equation}\label{star1}
\frac{\xi_0+y(R_1(T)+,T)}{2}\leq \xi_0(y,t).
\end{equation}
Suppose \descref{3.6-step-3}{Step-3} is not true. Then there exists a $\tau_0-\e_1<\ti{t}<\tau_0$ such that $(R_1(t),t)\in \Omega$ for $\ti{t}\leq t \leq \tau_0$ and $R_1(\ti{t})>0.$ Choose $\ti{t}<\ti{t}_1<\tau_0$ such that for $t\in (\ti{t},\ti{t}_1)$, $R_1(t)>0$ and $R_1(\ti{t}_1)=0$. Note that $\ti{t}_1$ exist because $R_1(\tau_0)=0$. Let $(\xi_0(t), \tau_0(t), s_{\xi_0(t)})$ be as in lemma \ref{3.6} starting at $(R_1(t),t)$ for $t\in (\ti{t}, \ti{t}_1)$. From (\ref{star1}) and \descref{3.6-step-2}{Step-2} we obtain $R_1(\tau_0(t))=0.$ From \descref{7}{(7)} of lemma \ref{3.6}, we have $\tau_0(t)\rr \ti{t}_1$ as $t\rr \ti{t}_1$ and $t\mapsto \tau_0(t)$ is continuous. Since $\tau_0(\ti{t_1})<\ti{t}_1$, hence by continuity, there exists $t_2\in (\ti{t}_1, t_1)$ such that $\tau_0(t_2)=\ti{t}_1$. Therefore $0=R_1(\tau_0(t_2))=R_1(\ti{t}_1)>0$ which is a contradiction. Hence \descref{3.6-step-3}{Step-3} holds. 

\descitem{Step-4:}{3.6-step-4} From R-H condition, we have for a.e., $t\in (\tau_0-\epsilon_0,\tau_0)$, $f(u(0+,t))=g(u(0-,t)).$ Since $R_1(t)=0$, hence $f'(u(0+,t))\leq 0.$ Since $f(\T_f)\leq g(\T_g)$, hence $L_1(t)=0$, therefore  $g'(u(0-,t))\linebreak\geq 0$, hence $u(0+,t)\leq \bar{\bar{\T}}_g$.
Therefore $f'(u(0+,t))\leq f'(\bar{\bar{\T}}_g)$. Letting $t\rr \tau_0$ to obtain 
$-\frac{y(0+,\tau_0+)}{\tau_0}\leq \lim\limits_{t \uparrow \tau_0}$
$f'(u(0+,t))\leq f'(\bar{\bar{\T}}_g).$
 This implies that $y(0+,\tau_0+)\geq -\tau_0 f'(\bar{\bar{\T}}_g)=\xi_0$. But from the hypothesis we have $y(0+, \tau_0+)\leq y(R_1(T)+,T)<\xi_0$, which is a contradiction. This proves the lemma if $R_1(T)>0$.

\descitem{Step-5:}{3.6-step-5} If $R_1(T)=0$, repeat \descref{3.6-step-3}{Step-3}, \descref{3.6-step-4}{Step-4} to obtain a contradiction if $y(0+,T)<\xi_0$. Hence the lemma. 
\end{description}
\end{proof}

\subsection{Solution with reflected  characteristics}\label{newsection}

Earlier we build two solutions via backward construction, namely one has shock and other is a continuous solution. Now we need to construct another solution by backward construction for  the reflected case and is as follows:\\
\noindent Let $(T,R_1,R_2, y(\cdot))\in \mathcal{R}(T)$. Assume that there are constants $y_-\leq 0 \leq y_+$ such that 
\begin{eqnarray*}
y(x)=\left\{\begin{array}{llll}
y_- &\mbox{if}& x\in (-\f, R_2),\\
y_+ &\mbox{if}& x\in (R_1,\f).
\end{array}\right.
\end{eqnarray*}
Since $(T,R_1, R_2,y(\cdot))\in \mathcal{R}(T)$, we have to consider three cases. In each case we construct a $u_{1,0}\in L^\f(\R)$ and the description of the corresponding solution $u$ such that for $i=1,2$, $R_i(T)=R_i$, $y(\cdot, T)=y(\cdot).$\\

\begin{description}[font=\normalfont]
	\descitem{Case 1:}{case-1} (see figure \ref{fig-6} for illustration)
	 Let $0\leq R_2 \leq R_1$ and assume that there exist $0\leq T_1 \leq T_2 \leq T$ such that $f'(\bar{\T}_g)=\frac{R_1}{T-T_1}=\frac{R_2}{T-T_2}$ and $(\tau_0,\xi_0, s_{\xi_0})$ be as in lemma \ref{3.6} for $(R_1,T)$. \\
	 \begin{figure}
	 	\centering
	 	\begin{tikzpicture}
	 	\draw (-4,0)--(7,0);
	 	\draw (-4,6)--(7,6);
	 	\draw (0,0)--(0,6);
	 	\draw (-.6,0)--(-.6,3.15);
	 	\draw (-.89,0)--(-.89,2.5);
	 	\draw (-1.2,0)--(-1.2,1.78);
	 	\draw (-1.5,0)--(-1.5,1.15);
	 	\draw (-1.8,0)--(-1.8,.45);
	 	\draw (-3,0)--(0,5.3);
	 	\draw[dashed](-2,0)--(0,4.5);
	 	\draw[dashed](0,4.5)--(3.35,6);
	 	\draw (0,3.5)--(3.35,6);
	 	\draw (0,5.3)--(3.35,6);
	 	\draw[red] (0,2.5)--(4.5,6);
			 	\draw[red](0,2.5)--(0,1);
	 	\draw[red] (1,0)--(0,1);
	 	\draw (1,0)--(4.5,6);
	 	\draw (1,0)--(3.6,5.15);
	 	\draw (1,0)--(3,4.6);
	 	\draw (1,0)--(2.5,4.15);
	 	\draw (1,0)--(2,3.7);
	 	\draw (1,0)--(1.5,3.2);
	 	\draw (1,0)--(.95,2.6);
	 	\draw (1,0)--(.5,2.05);
	 	\draw (1,0)--(.2,1.55);
	 	\draw (0,1) .. controls (.3,2.5) and (3,4.5) .. (4.5,6);
	 	\draw[dashed](2.5,0)--(4.5,6);
	 	\draw (4,0)--(4.5,6);
	 	\draw (0,-.2)node{\scriptsize $x=0$};
	 	\draw (-2,1)node{\scriptsize$w_{-}$};
	 	\draw (-.7,.7)node{\scriptsize$\theta_{g}$};
			 	\draw (.3,.37)node{\scriptsize$\bar{\bar{\theta}}_{g}$};
	 	\draw (-2,-.2)node{\scriptsize $\xi_{2}$};
	 	\draw (-3,-.2)node{\scriptsize $y_{-}$};
	 	\draw (1,-.2)node{\scriptsize $\xi_{0}$};
	 	\draw (2.5,-.2)node{\scriptsize $\xi_{1}$};
	 	\draw (4,-.2)node{\scriptsize $y_{+}$};
	 	\draw (4,0) node{\tiny $\bullet$};
	 	\draw (2.5,0) node{\tiny $\bullet$};
	 	\draw (1,0) node{\tiny $\bullet$};
	 	\draw (-3,0) node{\tiny $\bullet$};
	 	\draw (-2,0) node{\tiny $\bullet$};
	 	\draw (3.35,6) node{\tiny $\bullet$};
	 	\draw (4.5,6) node{\tiny $\bullet$};
	 	\draw (0,0) node{\tiny $\bullet$};
	 	\draw [<-](1,.7)--(5,.7);
	 	\draw (6.2,.7)node{\tiny $(f')^{-1}\left(\frac{x-\xi_{0}}{t}\right)$};
	 	\draw (-.2,1)node{\scriptsize $\tau_{0}$};
	 	\draw (-.2,2.5)node{\scriptsize $T_{1}$};
	 	\draw (-.2,3.5)node{\scriptsize $T_{2}$};
	 	\draw (-.15,5.35)node{\scriptsize $t_{0}$};
	 	\draw (-.15,4.5)node{\scriptsize $t_{1}$};
	 	\draw (-1.3,2)node{\scriptsize $\eta_{1}$};
	 	\draw (-2.6,1)node{\scriptsize $\eta_{2}$};
	 	\draw (4.6,5)node{\scriptsize $\gamma_{1}$};
	 	\draw (3.1,1)node{\scriptsize $\gamma_{2}$};
			 	\draw (6.1,3)node{\scriptsize $u_+$};
					\draw (3.8,3)node{\scriptsize $u_+$};
										\draw (3.6,4)node{\scriptsize $\beta_0$};
	 	\draw (2,1)node{\scriptsize $\gamma_{3}$};
	 	\draw (.5,.65)node{\scriptsize $\gamma_{4}$};
	 	\draw (.5,5.6)node{\scriptsize $\gamma_{5}$};
	 	\draw (.5,4.9)node{\scriptsize $\gamma_{6}$};
	 	\draw (1,5.2)node{\scriptsize$\bar{w}_{-}$};
			 	\draw (1,5.8)node{\scriptsize$\bar{w}_{-}$};
		\draw (-1,5.8)node{\scriptsize$w_{-}$};
\draw (1,4.6)node{\scriptsize$\bar{\theta}_{g}$};
	 	\draw (1,3.75)node{\scriptsize$\bar{\theta}_{g}$};
			 	\draw (1,2.95)node{\scriptsize$\bar{\theta}_{g}$};
	 	\draw [<-](2.2,3.8)--(5,3.8);
	 	\draw (5.3,3.8)node{\scriptsize $s_{\xi_{0}}$};
	 	\draw (3.35,6.25)node{\scriptsize $(R_{2},T)$};
	 	\draw (4.5,6.25)node{\scriptsize $(R_{1},T)$};
	 	\coordinate (P) at (2.65,2.5);
	 	\draw (P) node[rotate=59] (N) {\tiny Speed of $\gamma_{3}$=$f'(\beta_{0})$};
	 	\coordinate (P) at (4.4,2.5);
	 	\draw (P) node[rotate=85] (N) {\tiny Speed of $\gamma_{1}$=$f'(u_{+})$};
	 	\end{tikzpicture}
	 	\caption{Solution with reflected characteristics as in \descref{case-1}{Case 1}.}\label{fig-6}
	 \end{figure}
	Since $(T_1, R_1, R_2, y(\cdot))\in\mathcal{R}(T)$ hence $y_+$ must satisfy 
$$\xi_0\leq y_+.$$
In this case define the following quantities: let $T_2\leq t_0 \leq T$ be the unique solution of $h_+\left(\frac{R_2}{T-t_0}\right)=-\frac{y_-}{t_0}$. Let 
\begin{eqnarray*}
g'(w_-)&=& -\frac{y_-}{t_0}, \ f'(\bar{w}_-)=\frac{R_2}{T-t_0},\\
\eta_2(t) &=& g'(w_-)(t-t_0),\ 0\leq t \leq t_0,\\
\ga_5(t) &=& f'(\bar{w}_-)(t-t_0),\ t_0\leq t \leq T,\\
\ga_6(t) &=& R_2 +\frac{f(\bar{w}_-)-f(\bar{\T}_g)}{\bar{w}_--\bar{\T}_g} (t-T),\\
\eta_1(t) &=& \frac{g(w_-)-g(\T_g)}{w_--\T_g} (t-t_1),
\end{eqnarray*}
where $t_1$ be such that $\ga_6(t_1)=0$. Then by the definition of $h_+$, $t_0$ and convexity of $f, g$, it follows easily that $\eta_2(0)=y_-, \ga_5(T)=R_2$, $T_2\leq t_1 \leq t_0 \leq T$, $y_-\leq \xi_2=\eta_1(0)\leq 0$. Define 
\begin{eqnarray*}
f'(u_+)&=& \frac{R_1-y_+}{T}, \ f'(\beta_0)=\frac{R_1-\xi_0}{T},\\
\ga_1(t) &=& R_1+f'(u_+) (t-T),\\
\ga_2(t) &=& R_1+ \frac{f(u_+)-f(\beta_0)}{u_+-\beta_0},\\
\ga_3(t) &=& R_1+ f'(\beta_0)(t-T),\\
\ga_4(t) &=& f'(\bar{\bar{\T}}_g) (t-\tau_0).
\end{eqnarray*}
Since $\xi_0\leq y_+$, hence from convexity of $f$, we have 
$$\xi_0\leq \xi_1 =\ga_2(0)\leq y_+.$$
In this case define the initial data $u_{1,0}$ by 
\begin{eqnarray*}
u_{1,0}=\left\{\begin{array}{lllllll} 
w_- &\mbox{if}& x<\xi_2,\\
\T_g &\mbox{if}& \xi_2<x<0,\\
\bar{\bar{\T}}_g &\mbox{if}& 0<x<\xi_0,\\
\beta_0 &\mbox{if}& \xi_0<x<\xi_1,\\
u_+ &\mbox{if}& x>\xi_1.\\
\end{array}\right.
\end{eqnarray*}
It is easy to verify that the solution $u_1(\cdot,\cdot)$ of (\ref{conlaw-equation}) with initial data $u_{1,0}$ is given by 
\begin{eqnarray*}
u_{1}(x,t)=\left\{\begin{array}{lllllll} 
w_- &\mbox{if}& x<\min\{\eta_1(t),0\},\\
\T_g &\mbox{if}& \min\{\eta_1(t),0\}<x<0,\\
\bar{w}_- &\mbox{if}& 0<x<\ga_6(t),\\
\bar{\T}_g &\mbox{if}& \max\{\ga_6(t),0\}<x<s_{\xi_0}(t),\\
\bar{\bar{\T}}_g &\mbox{if}& 0<x<\ga_4(t),\\
(f')^{-1}\left(\frac{x-\xi_0}{t}\right)&\mbox{if}&\left\{\begin{array}{rl}
&s_{\xi_0}(t)<x<\ga_3(t), t\in (\tau_0,T)\\
&\mbox{or } \max\{\ga_4(t),0\}<x<\ga_3(t),
\end{array}\right. \\ 
\beta_0 &\mbox{if}& \ga_3(t)<x<\ga_2(t),\\
u_+ &\mbox{if}& x>\ga_2(t).\\
\end{array}\right.
\end{eqnarray*}
In this case define the domain $D_1$ for $t\leq T$ by 
$$D_1=\{(x,t):\ \min\{\eta_2(t),0\}<x\leq 0\}\cup \{(x,t):\ \max\{\ga_5(t),0\}<x<\ga_1(t)\}.$$
Then $u$ satisfies
$$u_1(\eta_2(t)+,t)=w_-, \ u_1(\ga_5(t)-,t)=\bar{w}_-,\  u_1(\ga_1(t)-,t)=u_+.$$

\descitem{Case 2:}{case-2} Let $R_1=R_2>0$ and for all $t\in [0,T]$, $f'(\bar{\T}_g)<\frac{R_1}{T-t}.$\\


 Let 
$0\leq t_0 \leq T$ be the unique solution of $h_+\left(\frac{R_1}{T-t_0}\right)=-\frac{y_-}{t_0}$. 
As in \descref{case-1}{Case 1}, define 
\begin{eqnarray*}
g'(w_-)&=& -\frac{y_-}{t_0}, f'(\bar{w}_-)=\frac{R_1}{T-t_0}, \ f'(u_+)=\frac{R_1-y_+}{T},\\
\eta_2(t)&=& g'(w_-)(t-t_0)\ \mbox{if}\ 0\leq t\leq t_0,\\
\ga_4(t)&=& f'(\bar{w}_-)(t-t_0)\ \mbox{if}\ t_0\leq t\leq T,\\
\ga_1(t) &=& R_1+ f'(u_+)(t-T),\\
\ga_2(t) &=& R_1 + f'(\bar{\T}_g) (t-T),\\
\ga_3(t) &=& R_1+ \frac{f(u_+)-f(\bar{w}_-)}{u_+-\bar{w}_-} (t-T),\\
\ga_6(t) &=& R_1+ \frac{f(u_+)-f(\bar{\T}_g)}{u_+-\bar{\T}_g} (t-T),\\
\ga_5(t) &=& R_1+ \frac{f(\bar{\T}_g)- f(\bar{w}_-)}{\bar{\T}_g-\bar{w}_-} (t-T).
\end{eqnarray*}
Now we have to consider four sub-cases:
\begin{description}[font=\normalfont]

\descitem{Subcase 1:}{subcase-1} (see figure \ref{fig-7} for illustration) $y_+=\ga_1(0)<\ga_2(0)$ and $\ga_3(0).$

\begin{figure}
	\centering
	\begin{tikzpicture}
	\draw (-4,0)--(5,0);
	\draw (-4,4)--(5,4);
	\draw (0,0)--(0,4);
	\draw (-2,0)--(0,3);
	\draw (0,3)--(3,4);
	\draw (1,0)--(3,4);
	\draw (2.5,0)--(3,4);
	\draw (4,0)--(3,4);
	\draw (2.5,-.25)node{\scriptsize $y_{+}$};
	\draw (-1.9,-.25)node{\scriptsize $y_{-}$};
	\draw (-1.55,1)node{\scriptsize $\eta_{2}$};
	\draw (.55,2.2)node{\scriptsize $D_{1}$};
	\draw (2.5,2.2)node{\scriptsize $D_{1}$};
	\draw (-.2,2.2)node{\scriptsize $D_{1}$};
	\draw (-2,2)node{\scriptsize $\omega_{-}$};
	\draw (-.5,.5)node{\scriptsize $\omega_{-}$};
	\draw (2.5,0) node{\tiny $\bullet$};
	\draw (1,3.5)node{\scriptsize $\gamma_{4}$};
	\draw (3.2,4.3)node{\scriptsize $(R_1,T)$};
	\draw (1,1.5)node{\scriptsize $\bar{\omega}_{-}$};
	\draw (2.9,1.5)node{\scriptsize $\gamma_{1}$};
	\draw (1.99,1.5)node{\scriptsize $\gamma_{3}$};
	\draw (3.2,.6)node{\scriptsize $u_{+}$};
	\draw (3.9,1.5)node{\scriptsize $\gamma_{2}$};
	\draw (3,4) node{\tiny $\bullet$};
	\end{tikzpicture}
	\caption{The figure illustrate as in \descref{subcase-1}{Subcase 1}.}\label{fig-7}
\end{figure}
\noindent Clearly $\bar{w}_-\geq u_+$, then define the initial data 
$u_{1,0}$ and the solution $u_1$ of (\ref{conlaw-equation}) by 
\begin{eqnarray*}
u_{1,0}(x)=\left\{\begin{array}{lllll}
w_-&\mbox{if}& x<0,\\
\bar{w}_-&\mbox{if}& 0<x<\ga_3(0),\\
u_+ &\mbox{if}& x>\ga_3(0)
 \end{array}\right.
\end{eqnarray*}
and the solution $u_1$ is given by 
\begin{eqnarray*}
u_{1}(x,t)=\left\{\begin{array}{lllll}
w_-&\mbox{if}& x<0,\\
\bar{w}_-&\mbox{if}& 0<x<\ga_3(t),\\
u_+ &\mbox{if}& x>\ga_3(t).
 \end{array}\right.
\end{eqnarray*}
Define for $0\leq t \leq T$,
$$D_1=\{(x,t): \min\{\eta_2(t),0\}<x\leq 0\}\cup\{(x,t):\max\{\ga_4(t),0\}<x<\ga_1(t)\}.$$
Then $u_1$ satisfies 
$$u_1(\eta_2(t)+,t)=w_-, u_1(\ga_4(t)-,t)=\bar{w}_-, u_1(\ga_1(t)-,t)=u_+.$$
\descitem{Subcase 2:}{subcase-2} (see figure \ref{fig-8} for illustration) Let $y_+=\ga_1(0)<\ga_2(0)$ and $\ga_3(0)<0$.\\

\begin{figure}
	\centering
	\begin{tikzpicture}
	\draw (-4,0)--(5,0);
	\draw (-4,4.5)--(5,4.5);
	\draw (0,0)--(0,4.5);
	\draw (-3,0)--(0,3.5);
	\draw (-1,0)--(0,2);
	\draw (0,3.5)--(3.5,4.5);
	\draw (0,2)--(3.5,4.5);
	\draw (2,0)--(3.5,4.5);
	\draw (3,0)--(3.5,4.5);
	\draw (-3,-.25)node{\scriptsize $y_{-}$};
	\draw (-1,-.25)node{\scriptsize $\xi_{2}$};
	\draw (2,-.25)node{\scriptsize $y_{+}$};
	\draw (2,0) node{\tiny $\bullet$};
	\draw (-1,0) node{\tiny $\bullet$};
	\draw (-3,0) node{\tiny $\bullet$};
	\draw (-2.5,1)node{\scriptsize $\eta_{2}$};
	\draw (-.7,1)node{\scriptsize $\eta_{1}$};
	\draw (-.2,1)node{\scriptsize $D_{1}$};
	\draw (1.5,.5)node{\scriptsize $D_{1}$};
	\draw (-.2,2.6)node{\scriptsize $D_{1}$};
	\draw (.3,2.6)node{\scriptsize $D_{1}$};
	\coordinate (P) at (-1.5,1.5);
	\draw (P) node[rotate=50] (N) {\tiny Speed of $\eta_{2}$=$g'(\omega_{-})$};
	\draw (-.15,3.6)node{\scriptsize $t_{0}$};
	\draw (1,1)node{\scriptsize $u_{+}$};
	\draw (1,4)node{\scriptsize $\gamma_{4}$};
	\draw (1.5,3.3)node{\scriptsize $\gamma_{3}$};
	\draw (3.3,1)node{\scriptsize $\gamma_{2}$};
	\draw (2.55,1)node{\scriptsize $\gamma_{1}$};
	\draw (3.5,4.5) node{\tiny $\bullet$};
	\draw (3.5,4.75)node{\scriptsize $(R_{1},T)$};
	\coordinate (P) at (2.6,2.5);
	\draw (P) node[rotate=73] (N) {\tiny Speed of $\gamma_{1}$=$f'(u_{+})$};
	\coordinate (P) at (3.5,2.5);
	\draw (P) node[rotate=83] (N) {\tiny Speed of $\gamma_{2}$=$f'(\bar{\theta}_{g})$};
	\coordinate (P) at (1.3,3.7);
	\draw (P) node[rotate=16] (N) {\tiny Speed of $\gamma_{4}$=$f'(\bar{\omega}_{-})$};
	\end{tikzpicture}
	\caption{The figure illustrate as in  \descref{subcase-2}{Subcase 2}.}\label{fig-8}
\end{figure}
 Let $0<t_1<t_0$ be such that $\ga_3(t_1)=0$. Since $\ga_1(0)<\ga_2(0),$ hence $\bar{w}_-\geq u_+\geq \bar{\T}_g$, therefore there exists a unique $w_-\geq \bar{u}_+\geq \T_g$ such that $f(u_+)=g(\bar{u}_+)$. Let $\eta_1(t)=\frac{g(w_-)-g(\bar{u}_+)}{w_--\bar{u}_+}(t-t_1)$, then by convexity of $g$, it follows that $y_-\leq \eta_1(0)=\xi_2\leq 0.$  Let 
\begin{eqnarray*}
u_{1,0}(x)=\left\{\begin{array}{lllll}
w_- &\mbox{if}& x<\xi_2,\\
\bar{u}_+ &\mbox{if}& \xi_2<x<0,\\
u_+ &\mbox{if}& x>0,
\end{array}\right.
\end{eqnarray*}
then the solution $u_1$ to (\ref{conlaw-equation}) with initial data $u_{1,0}$ is given by 

\begin{eqnarray*}
u_{1}(x,t)=\left\{\begin{array}{lllll}
w_- &\mbox{if}& x<\min\{\eta_1(t),0\},\\
\bar{u}_+ &\mbox{if}& \eta_1(t)<x<0,\\
\bar{w}_- &\mbox{if}& 0<x<\ga_3(t),\\
u_+ &\mbox{if}& x>\ga_3(t).
\end{array}\right.
\end{eqnarray*}
Let 
$$D_1=\{(x,t):\ \min\{\eta_2(t),0\}<x\leq 0\}\cup \{(x,t):\ \max\{0,\ga_4(t)\}<x<\ga_1(t)\},$$
then $u_1$ satisfies
$$u_1(\eta_2(t)+,t)=w_-,\ u_!(\ga_4(t)+,t)=\bar{w}_-,\ u_1(\ga_1(t)-,t)=u_+.$$
\descitem{Subcase 3:}{subcase-3}  $0\leq \ga_2(0) \leq  \ga_1(0)=y_+$, $\ga_5(0)\geq 0$. \\Let 
\begin{eqnarray*}
u_{1,0}=\left\{\begin{array}{lllll}
w_- &\mbox{if}& x<0,\\
\bar{w}_- &\mbox{if}& 0<x<\ga_5(0),\\
\bar{\T}_g &\mbox{if}& \ga_5(0)<x<\ga_6(0),\\
u_+ &\mbox{if}& x>\ga_6(0)
\end{array}\right.
\end{eqnarray*}
and the corresponding solution $u_1$ in $\R\times [0,T]$ is given by 
\begin{eqnarray*}
u_1(x,t)=\left\{\begin{array}{lllll}
w_- &\mbox{if}& x<0,\\
\bar{w}_- &\mbox{if}& 0<x<\ga_5(t),\\
\bar{\T}_g &\mbox{if}& \ga_5(t)<x<\ga_6(t),\\
u_+ &\mbox{if}& x>\ga_6(t).
\end{array}\right.
\end{eqnarray*}
Define for $0<t\leq T$,
$$D_1=\{(x,t):\ \min\{\eta_2(t),0\}<x\leq 0\}\cup\{(x,t):\ \max\{\ga_4(t),0\}<x<\ga_1(t)\},$$
then $u_1$ satisfies 
$$u_1(\eta_2(t)+,t)=w_-,\ u_1(\ga_4(t)+,t)=\bar{w}_-,\ u_1(\ga_1(t)-,t)=u_+.$$

%
\descitem{Subcase 4:}{subcase-4} (see figure \ref{fig-9} for illustration) $0\leq \ga_2(0) \leq \ga_1(0)$, $\ga_5(0)<0$.\\

\begin{figure}
	\centering
	\begin{tikzpicture}
	\draw (-4,0)--(5,0);
	\draw (-4,4)--(5,4);
	\draw (0,0)--(0,4);
	\draw (-2,0)--(0,3);
	\draw[dashed](-.7,0)--(0,2);
	\draw[dashed] (0,2)--(3,4);
	\draw (0,3)--(3,4);
	\draw (1,0)--(3,4);
	\draw [dashed](2.5,0)--(3,4);
	\draw (4,0)--(3,4);
	\draw (2.5,-.25)node{\scriptsize $\xi_{1}$};
	\draw (4,-.25)node{\scriptsize $y_{+}$};
	\draw (-.7,-.25)node{\scriptsize $\xi_{2}$};
	\draw (-2,-.25)node{\scriptsize $y_{-}$};
	\draw (4,0) node{\tiny $\bullet$};
	\draw (2.5,0) node{\tiny $\bullet$};
	\draw (-.7,0) node{\tiny $\bullet$};
	\draw (-2,0) node{\tiny $\bullet$};
	\draw (-.2,3)node{\scriptsize $t_{0}$};
	\draw (-.15,2)node{\scriptsize $t_{1}$};
	\draw (-1.55,1)node{\scriptsize $\eta_{2}$};
	\draw (-.7,.5)node{\scriptsize $\eta_{1}$};
	\coordinate (P) at (-1.05,1.05);
	\draw (P) node[rotate=57] (N) {\tiny Speed of $\eta_{2}$=$g'(\omega_{-})$};
	\draw (1,3.5)node{\scriptsize $\gamma_{4}$};
	\draw (2.9,1.5)node{\scriptsize $\gamma_{3}$};
	\draw (4.1,.7)node{\scriptsize $\gamma_{1}$};
	\draw (2,1.5)node{\scriptsize $\gamma_{2}$};
	\draw (3.2,4.3)node{\scriptsize $(R_1,T)$};
	\draw (2,3)node{\scriptsize $\gamma_{5}$};
	\draw (3,4) node{\tiny $\bullet$};
	\coordinate (P) at (3.6,2.5);
	\draw (P) node[rotate=104] (N) {\tiny Speed of $\gamma_{1}$=$f'(u_{+})$};
	\coordinate (P) at (1.6,1.7);
	\draw (P) node[rotate=62] (N) {\tiny Speed of $\gamma_{2}$=$f'(\bar{\theta}_{g})$};
	\coordinate (P) at (1.2,3.2);
	\draw (P) node[rotate=20] (N) {\tiny Speed of $\gamma_{4}$=$f'(\bar{\omega}_{-})$};
	\end{tikzpicture}
	\caption{The figure illustrate as in  \descref{subcase-4}{Subcase 4}.}\label{fig-9}
\end{figure}

Let $t_1$ be such that $\ga_5(t_1)=0$. Let $\eta_1(t)=\frac{g(w_-)-g(\T_g)}{w_--\T_g}$ and 
\begin{eqnarray*}
u_{1,0}(x) =\left\{\begin{array}{lllll}
w_- &\mbox{if}& x<\eta_1(0),\\
\T_g &\mbox{if}& \eta_1(0)<x<0,\\
\bar{\T}_g&\mbox{if}& 0<x<\ga_6(0),\\
u_+ &\mbox{if}& x>\ga_6(0)
\end{array}\right.
\end{eqnarray*}then the  corresponding solution $u_1$ of (\ref{conlaw-equation}) is given by 
\begin{eqnarray*}
u_{1}(x,t) =\left\{\begin{array}{lllll}
w_- &\mbox{if}& x<\min\{\eta_1(t),0\},\\
\T_g &\mbox{if}&\min\{\eta_1(t),0\}<x<0,\\
\bar{w}_-&\mbox{if}& 0<x<\ga_5(t),\\
\bar{\T}_g &\mbox{if}& \max\{\ga_5(t),0\}<x<\ga_6(t),\\
u_+ &\mbox{if}& x>\ga_6(t).
\end{array}\right.
\end{eqnarray*}
Define 
$$D_1=\{(x,t):\ \min\{\eta_2(t),0\}<x\leq 0\}\cup \{(x,t):\ \max\{0,\ga_4(t)\}<x<\ga_1(t)\},$$
then $u_1$ satisfies 
$$u_1(\eta_2(t)+,t) =w_-,\ u_1(\ga_4(t)+,t)=\bar{w}_-,\ u_1(\ga_1(t)-,t)=u_+.$$
\end{description}
\descitem{Case 3:}{case-3} $R_1=0,$ $y_-\leq 0 \leq \xi_0=-f'(\bar{\bar{\T}}_g)T\leq y_+.$\\
Define 
\begin{eqnarray*}
g'(w_-)&=&-\frac{y_-}{T}, f'(u_+)=\frac{R_1- y_+}{T},\\
\eta_2(t) &=& g'(w_-)(t-T),\\
\eta_1(t) &=&\left(\frac{g(w_-)-g(\T_g)}{w_--\T_g}\right)(t-T),\\
\ga_3(t) &=& f'(\bar{\bar{\T}}_g) (t-T),\\
\ga_2(t) &=& R_1+ \frac{f(\bar{\bar{\T}}_g)-f(u_+)}{\bar{\bar{\T}}_g - u_+}(t-T),\\
\ga_1(t) &=& R_1+ f'(u_+)(t-T).
\end{eqnarray*}
Due to $\xi_0\leq y_+=\ga_1(0)$, we have $u_+\leq \bar{\bar{\T}}_g$. Hence by convexity of $f$, $\xi_0\leq \xi_1=\ga_2(0)\leq y_+.$ Since $w_-\geq \T_g$, hence $y_-\leq \xi_2=\eta_1(0)\leq 0.$
Define 
\begin{eqnarray*}
u_{1,0}(x)=\left\{\begin{array}{llll}
w_- &\mbox{if}& x<\xi_2,\\
\T_g &\mbox{if}& \xi_2<x<0,\\
\bar{\bar{\T}}_g &\mbox{if}& 0<x<\xi_1,\\
u_+ &\mbox{if}& x>\xi_1
\end{array}\right.
\end{eqnarray*}
and the corresponding solution $u_1$ is given by 
\begin{eqnarray*}
u_1(x,t)=\left\{\begin{array}{llll}
w_- &\mbox{if}& x<\min\{0,\eta_1(t)\},\\
\T_g &\mbox{if}& \min\{0,\eta_1(t)\}<x<0,\\
\bar{\bar{\T}}_g &\mbox{if}& 0<x<\ga_2(t),\\
u_+ &\mbox{if}& x>\ga_2(t).
\end{array}\right.
\end{eqnarray*}
Let $$D_1=\{(x,t):\ \min\{0,\eta_2(t)\}<x\leq 0\}\cup \{(x,t): \ 0\leq x <\max\{0,\ga_1(t)\}\},$$
then $u_1$ satisfies 
$$u_1(\eta_2(t)+,t)=w_-,\ u_1(\ga_1(t)-,t)=u_+.$$
\end{description}

\begin{figure}
	\centering
	\begin{tikzpicture}
	\draw (-4,0)--(5,0);
	\draw (-4,4)--(5,4);
	\draw (0,0)--(0,4);
	\draw (-1,0)--(0,4);
	\draw (-2.5,0)--(0,4);
	\draw(1.5,0)--(0,4);
	\draw (3,0)--(0,4);
	\draw (4,0)--(0,4);
	\draw (0,-.2)node{\scriptsize $x=0$};
	\draw (0,0)node{\tiny $\bullet$};
	\draw (-1,-.2)node{\scriptsize $\xi_{2}$};
	\draw (-1,0)node{\tiny $\bullet$};
	\draw (-2.5,-.2)node{\scriptsize $y_{-}$};
	\draw (-2.5,0)node{\tiny $\bullet$};
	\draw (1.5,-.2)node{\scriptsize $\xi_{0}$};
	\draw (1.5,0)node{\tiny $\bullet$};
	\draw (3,-.2)node{\scriptsize $\xi_{1}$};
	\draw (3,0)node{\tiny $\bullet$};
	\draw (4,-.2)node{\scriptsize $y_{+}$};
	\draw (4,0)node{\tiny $\bullet$};
	\draw (0,4.25)node{\scriptsize $(R_{1}, T)$};
	\draw (0,4)node{\tiny $\bullet$};
	\draw (-.4,.7)node{\scriptsize $D_{1}$};
	\draw (-1.4,.7)node{\scriptsize $D_{1}$};
	\draw (-.7,2)node{\scriptsize $\eta_{1}$};
	\draw (-1.2,2.5)node{\scriptsize $\eta_{2}$};
	\draw (.7,.7)node{\scriptsize $D_{1}$};
	\draw (1.8,.7)node{\scriptsize $D_{1}$};
	\draw (2.9,.7)node{\scriptsize $D_{1}$};
	\draw (.6,2)node{\scriptsize $\gamma_{3}$};
	\draw (1.2,2.1)node{\scriptsize $\gamma_{2}$};
	\draw (1.8,2)node{\scriptsize $\gamma_{1}$};
	\end{tikzpicture}
	 	\caption{Solution with reflected characteristics as in \descref{case-3}{Case 3.}}	\label{fig-14}
\end{figure}
\section{Backward construction}\label{backward}

\begin{lemma}\label{lemma6.2}
Let $0<R_2$ and $y:[0, R_2]\rr (-\f,0]$ be a non decreasing function. Define 
\begin{eqnarray*}
y_0 &=& y(0+), y_1=y(R_2-),\\
h_+\left(\frac{R_2}{T-t_1}\right) &=&-\frac{y_1}{t_1},\\
g'(u_-)&=&-\frac{y_0}{T}, g'(w_-)=-\frac{y_1}{t_1}, f(\bar{w}_-)=g(w_-), f'(\bar{w}_-)\geq 0,\\
\eta_3(t) &=& g'(u_-)(t-T), \eta_2(t)= g'(w_-)(t-t_1),\\
\ti{\eta}_2 (t) &=& f'(\bar{w}_-)(t-t_1).
\end{eqnarray*}
Let  $$D_2=\{(x,t):\ \eta_3(t)<x<\min\{\eta_2(t),0\}\}\cup \{(x,t):\ 0\leq x<\max\{\ti{\eta}_2(t),0\}\}.$$
Then there exists a $u_{2,0}\in L^\f(\R)$ and the corresponding solution $u_2$ of (\ref{conlaw-equation}) such that 
$$u_2(\eta_3(t)+,t)=u_-,\ u_2(\eta_2(t)-,t)=w_-,\ u_2(\ti{\eta}_2(t)-,t)=\bar{w}_-.$$ 
\end{lemma}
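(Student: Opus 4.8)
The plan is to invert the interface-refraction relation so as to reconstruct the characteristic bundle filling $D_2$, to realise this bundle through the shock and continuous building blocks of Lemmas~\ref{3.4} and~\ref{3.5}, and then to read off the three boundary traces from the explicit Lax--Oleinik formula of Theorem~\ref{theorem3.1}. First I would apply part~\descref{3.3-1}{(1)} of Lemma~\ref{3.3} with $\rho=y$ on $[\al,\beta]=[0,R_2]$ to obtain a strictly decreasing crossing-time function $t_+:(0,R_2]\rr[0,T]$ with $\frac{x}{T-t_+(x)}\in I_+$ and $-\frac{y(x)}{t_+(x)}=h_+\!\left(\frac{x}{T-t_+(x)}\right)$; the standing hypothesis of Lemma~\ref{3.3}, that $y(x)=0$ forces $T-\frac{x}{f'(\bar{\T}_g)}\ge0$, is itself guaranteed by the defining relation $h_+\!\left(\frac{R_2}{T-t_1}\right)=-\frac{y_1}{t_1}$ of $t_1$. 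By \descref{3.3-1.v}{(v)} one has $t_+(0+)=T$, while $t_+(R_2)=t_1$ by uniqueness of the root; comparison with Lemma~\ref{lemma3.1} identifies $\eta_3$ (foot $y_0$, crossing time $T$), $\eta_2$ (foot $y_1$, crossing time $t_1$) and the refracted segment $\ti{\eta}_2$. The values $u_-,w_-,\bar w_-$ are precisely the interface traces $g^{*'}(-y_0/T)$, $g^{*'}(-y_1/t_1)$ and $f^{*'}(R_2/(T-t_1))$, so the relations $g'(u_-)=-y_0/T$, $g'(w_-)=-y_1/t_1$, $f(\bar w_-)=g(w_-)$, $f'(\bar w_-)\ge0$ are consistent with the Rankine--Hugoniot identity \descref{viii.}{(viii)}.

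Next I would build the datum. On $x<0$ the datum is chosen so that the characteristic leaving the foot $y(x)$ carries the value $g^{*'}(-y(x)/t_+(x))$ and meets the interface at $(0,t_+(x))$; at a jump of $y$ I insert a single-interface shock from Lemma~\ref{3.4}, along a continuous increase of $y$ a continuous wave from Lemma~\ref{3.5}, and I glue successive pieces along $\{x=0\}$ by the Rankine--Hugoniot identity \descref{viii.}{(viii)} together with the interface entropy condition. On $x>0$ the datum is set so that the only characteristics entering $D_2$ are the refractions of these waves, fanning out of the interface between $(0,t_1)$ and $(0,T)$. For a general non-decreasing $y$ I would first approximate it by piecewise-constant non-decreasing $y^k$ with finitely many jumps, reducing the construction to finitely many glued copies of Lemma~\ref{3.4}, obtain data $u_{2,0}^k$ and entropy solutions $u_2^k$, and then pass to the limit through the Stability Lemma~\ref{stability}: extracting $u_{2,0}^k\rightharpoonup u_{2,0}$ in $L^\f$--weak$*$ yields $u_2^k\rr u_2$ in $\mathcal D'$, $ch_k\rr ch$, and $u_2$ is the solution with datum $u_{2,0}$. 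Since the extreme curves $\eta_3^k,\eta_2^k,\ti{\eta}_2^k$ and the adjacent values $u_-^k,w_-^k,\bar w_-^k$ depend continuously on $(y_0,y_1,t_1)$, they converge to $\eta_3,\eta_2,\ti{\eta}_2$ and $u_-,w_-,\bar w_-$.

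It then remains to verify the three trace identities for the limit solution. Here I would use that $\eta_3,\eta_2,\ti{\eta}_2$ are the extreme members of the refracted bundle and that, by Lemma~\ref{2.2} (no forward rarefaction) and its consequence Lemma~\ref{lemma4.3}, the map $x\mapsto t_+(x)$ is strictly monotone, so that through almost every point of $D_2$ there passes a single characteristic and the explicit formula \descref{ix.}{(ix)} applies: just to the right of $\eta_3$ the foot is $y_0$, giving $g^{*'}(-y_0/T)=u_-$; just to the left of $\eta_2$ the foot is $y_1$, giving $g^{*'}(-y_1/t_1)=w_-$; and just to the left of $\ti{\eta}_2$ the value is $f^{*'}(R_2/(T-t_1))=\bar w_-$. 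The one-sided limits are pinned down by the monotonicity of the feet from Lemma~\ref{lemma3.1}\,\descref{ii.}{(ii)} together with the localisation provided by $ch_k\rr ch$ in Lemma~\ref{stability}.

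I expect the main obstacle to be exactly this last transfer of one-sided traces across the weak$*$/$\mathcal D'$ limit: traces are not continuous for weak$*$ convergence, so one cannot simply pass to the limit in $u_2^k(\eta_3^k+,t)=u_-^k$. The way around it is the rigidity of the characteristic field near the three bounding curves forced by Lemma~\ref{2.2}: because no forward rarefaction can form at the interface, $t_+$ is strictly decreasing and the bundle degenerates to its explicit single-characteristic description near $\eta_3,\eta_2,\ti{\eta}_2$, a description stable under $ch_k\rr ch$. A secondary difficulty is to check, in the gluing step, that the stacked shocks of Lemma~\ref{3.4} genuinely satisfy the interface entropy condition and reproduce exactly the curves $\eta_3,\eta_2,\ti{\eta}_2$; this is where the sign conditions $g'(u_-),g'(w_-),f'(\bar w_-)\ge0$ and the ordering $y_0\le y_1$ inherited from the monotonicity of $y$ enter.
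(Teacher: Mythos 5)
Your approximation scheme is essentially the paper's own — discretize $y$, use Lemma~\ref{3.3} to order the intermediate states, and realize the data as finitely many glued copies of the shock and continuous building blocks of Lemmas~\ref{3.4} and~\ref{3.5} — but your limit passage takes a genuinely different route. The paper does not use weak$*$ compactness here: it first assumes $f,g$ uniformly convex (so that $h_+$ is locally Lipschitz), proves a total-variation bound on $g'(u^N_{2,0})$ by splitting the sum over crossing times $\tau_i\le T/2$ and $\tau_i>T/2$, applies Helly's theorem to get pointwise convergence of the data and of the crossing-time functions $t^N(\cdot)$, and then invokes the $L^1$-contractivity (xii) of Theorem~\ref{theorem3.1} — legitimately, since the approximate solutions have discrete Lipschitz discontinuity sets — to get strong $L^1$ convergence of $u^N_2$; the trace identities and the relation $-y(x)/t_+(x)=h_+\left(\frac{x}{T-t_+(x)}\right)$ then pass to the limit directly, and uniform convexity is removed only at the very end via the Stability Lemma~\ref{stability}. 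Your soft route (uniform $L^\infty$ bound, hence weak$*$ convergence of data, hence by Lemma~\ref{stability} convergence of solutions in $\mathcal{D}'$ together with $\lim_k ch_k\subset ch$) buys freedom from the TV estimate and from the uniform-convexity detour, but it shifts the entire burden onto the trace-transfer step you flag at the end. That step is completable: for a.e.\ $(x,t)$ the set $ch(x,t)$ of the limit solution is a singleton, so subset convergence of characteristics upgrades to convergence of feet and crossing times, and then formula (ix) of Theorem~\ref{theorem3.1}, monotonicity of the feet, and a speed comparison (by convexity no admissible discontinuity can propagate along $\eta_3$, $\eta_2$ or $\ti{\eta}_2$) pin down the one-sided traces — this is exactly the singleton-characteristic technique the paper itself uses to prove Lemma~\ref{lemma3.1} — but it must be written out; ``rigidity'' as stated is a heuristic, not an argument. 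Two further omissions you should repair: the uniform $L^\infty$ bound on $u^k_{2,0}$ is not free (the intermediate values $b_i$ could blow up if interface-crossing times approach $0$; the paper proves the bound by the $\tau_i\le T/2$ versus $\tau_i>T/2$ dichotomy), and your piecewise-constant approximations should be normalized so that $y^k(0+)=y_0$ and $y^k(R_2-)=y_1$, which makes $\eta_3,\eta_2,\ti{\eta}_2$ and $u_-,w_-,\bar w_-$ independent of $k$ and spares you from chasing moving boundary curves and boundary states through the limit.
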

\begin{proof}
Without loss of generality by approximation,  we assume that $y$ is a strictly increasing continuous function and $N>1$, let $k>1$ and define a discreatization by 
	\begin{eqnarray}
	\left\{\begin{array}{rll}
	y_{0}&=&z_{0}<z_{1}<\cdots<z_{k}=y_{1},\\
	|z_{i+1}-z_{i}|&<&\frac{1}{N},\\
	0&=&x_{0}<x_{1}<\cdots<x_{k}=R_{2},\\
	y(x_{i})&=&z_{i} \mbox{ with } y_{0}=y(0) \mbox{ and } y_{1}=y(R_{2}-).
	\end{array}\right.
	\end{eqnarray}
	Let $\tau_{0}=T$ and define $\{\tau_{i}\}$ for $1\le i\le 2k$, $\{a_{i}\}, \{b_{i}\}, \{\tau_{i}(x)\}$ for $1\le i\le k$ by 
	\begin{eqnarray*}
	h_{+}\left(\frac{x_{i}}{T-\tau_{2i-1}}\right)&=&-\frac{z_{i-1}}{\tau_{2i-1}},\\
		h_{+}\left(\frac{x_{i}}{T-\tau_{2i}}\right)&=&-\frac{z_{i}}{\tau_{2i}},\\
				f'(a_{2i-1})&=&\frac{x_{i}}{T-\tau_{2i-1}},\\ f'(a_{2i})&=&\frac{x_{i}}{T-\tau_{2i}},\\
				f(a_{i})&=&g(b_{i}),\\
				g'(b_{i})&\ge&0.				
	\end{eqnarray*}
	Observe that $\tau_{2k}=t_{1}$, $g'(b_{0})=-\frac{y_{0}}{T}=g'(u_-)$, $g'(b_{2k})=w_{-}$ and $f'(a_{2k})=\bar{w}_{-}$. Then from lemma \ref{3.3}, we have $a_{2i-1}>a_{2i}$, $b_{2i-1}>b_{2i}$, $T=\tau_{0}>\tau_{1}>\tau_{2}>\cdots>\tau_{2k}=t_{1}$. Define 
	\begin{eqnarray*}
	s_{i}&=&\frac{f(a_{2i-1})-f(a_{2i})}{a_{2i-1}-a_{2i}}, 1\le i\le k,\\
	S_{i}&=&\frac{g(b_{2i-1})-g(b_{2i})}{b_{2i-1}-b_{2i}}, 1\le i\le k,\\
	r_{0}(t)&=&g'(b_{0})(t-T)=g'(u_{-})(t-\tau_{0}),\\
	r_{i}(t)&=&g'(b_{i})(t-\tau_{i})\mbox{ for }1\le i\le 2k,\\
	\tilde{r}_{i}(t)&=&f'(a_{i})(t-\tau_{i})\mbox{ for }1\le i\le 2k,\\
	\alpha_{i}(t)&=&x_{i}+s_{i}(t-T),\\
	\beta_{i}(t)&=&S_{i}(t-\delta_i),
	\end{eqnarray*}
	where $\delta_i$ is defined by $\alpha_{i}(\delta_i)=0$. Then from the convexity of $f$ and $g$, we have 
	$\tau_{2i-1}<\delta_i<\tau_{2i}$, $z_{i-1}<\beta_{i}(0)<z_{i}$. Since $g'(u_{-})=g'(b_{0})=-\frac{y_{0}}{T}$, $\tau_{2k}=t_{1}$, $ f'(a_{2k})=\frac{x_{k}}{T-\tau_{2k}}=\frac{R_{2}}{T-t_{1}}=f'(\bar{w}_{-})$ and $g'(b_{2k})=-\frac{z_{k}}{\tau_{2k}}=-\frac{y_{1}}{t_{1}}$, hence $b_{2k}=w_{-}$. 
	Define 
	\begin{eqnarray}
	u^{N}_{2, 0}=\left\{\begin{array}{lll}
	u_{-}&\mbox{if}& x<y_{0}=z_{0},\\
	b_{2i-1}&\mbox{if}&z_{i-1}<x<\beta_{i}(0), 1\le i\le k,\\
	b_{2i}&\mbox{if}&\beta_{i}(0)<x<z_{i},\\
	w_{-}&\mbox{if}&z_{2k}<x<0,\\
	\bar{w}_- &\mbox{if}& x>0.
	\end{array}\right.
	\end{eqnarray}
	Then the solution $u^{N}_{2}$ of \eqref{conlaw-equation} with initial data $u^{N}_{2, 0}$ in $\R\times(0, T)$ is given by (see figure \ref{fig-10})
	\begin{eqnarray}
	u^{N}_2(x, t)=\left\{\begin{array}{lll}
	u_{-}&\mbox{if}&x<r_{0}(t),\\
	(g')^{-1}\left(\frac{x-z_{i}}{t}\right)&\mbox{if}&r_{2i}(t)<x<\min\{r_{2i+1}(t), 0\},\\
	(f')^{-1}\left(\frac{x}{t-\tilde{t}_{i}(x, t)}\right)&\mbox{if}&\max\{\tilde{r}_{2i}(t), 0\}<x<\tilde{r}_{2i+1}(t),\\
	b_{2i-1}&\mbox{if}&r_{2i-1}(t)<x<\min\{S_{i}(t), 0\},\\
	b_{2i}&\mbox{if}&S_{i}(t)<x<\min\{r_{2i}(t), 0\},\\
	a_{2i-1}&\mbox{if}&\max\{\tilde{r}_{2i+1}(t), 0\}<x<s_{i}(t),\\
	a_{2i}&\mbox{if}&\max\{s_{i}(t), 0\}<x<\tilde{r}_{2i}(t),\\
	w_{-}&\mbox{if}&r_{2k}(t)<x<0,\\
	\bar{w}_{-}&\mbox{if}&\max\{\tilde{r}_{2k}(t), 0\}<x,	
	\end{array}\right.
	\end{eqnarray}
	where $\tilde{t}_{i}(x, t)$ is the unique solution of 
	\begin{equation*}
	h_{+}\left(\frac{x}{t-\tilde{t}_{i}(x, t)}\right)=-\frac{z_{i}}{\tilde{t}_{i}(x, t)},\mbox{ for } x\in(x_{i}, x_{i+1}), i\le k-1.
	\end{equation*}
	 \begin{figure}
		\centering
		\begin{tikzpicture}
		\draw (-7,0)--(7,0);
		\draw (0,0)--(0,5.5);
		\draw (-7,5.5)--(7,5.5);
		\draw [dotted,thick](-2,0)--(0,2.7);
		\draw[dotted, thick](-2,0)--(0,2);
		\draw[dotted, thick](-2,0)--(0,2.3);
		\draw (-2,0)--(0,1.7);
		\draw (-2,0)--(0,3);
		\draw (0,1.7)--(6.5,5.5);
		\draw (0,3)--(3,5.5);
		\draw[dotted, thick](0,2.7)--(3.9,5.5);
		\draw[dotted, thick](0,2)--(5.7,5.5);
		\draw[dotted, thick](0,2.3)--(4.8,5.5);
		\draw (3,5.5)--(0,4);
		\draw (3,5.5)--(0,3.5);
		\draw (0,3.5)--(-3,0);
		\draw (0,4)--(-4.2,0);
		\draw [dashed](-3.5,0)--(0,3.5);
		\draw [dashed](-2.5,0)--(0,3.5);
		\draw[dotted,thick](-3.25,0)--(-1.75,1.48);
		\draw[dotted,thick](-2.75,0)--(-1.75,1.48);
		\draw [dotted, thick](-3.75,0)--(0,3.75);
		\draw [dotted, thick](-2.25,0)--(0,3.25);
		\draw [dotted, thick](0,3.75)--(1.45,4.47);
		\draw [dotted, thick](0,3.25)--(1.45,4.47);
		\draw (-4.2,0)--(0,5);
		\draw [dashed] (-4.2,0)--(0,4.75);
		\draw[dashed](-4.2,0)--(0,4.5);
		\draw[dashed](-4.2,0)--(0,4.25);
		\draw [dashed](0,4.25)--(2.5,5.5);
		\draw[dashed](0,4.5)--(2,5.5);
		\draw[dashed](0,4.75)--(1.5,5.5);
		\draw (0,5)--(1,5.5);
		\draw (1,5.75) node{\small$(x_{1},T)$};
		\draw (3,5.75) node{\small$(x_{2},T)$};
		\draw (6.5,5.75) node{\small$(x_{3},T)$};
		\draw (-2,-.25) node{\footnotesize$(\rho_{2},0)$};
		\draw (-3,-.25) node{\footnotesize$(S,0)$};
		\draw (-4.2,-.25) node{\footnotesize$(\rho_{1},0)$};
		\draw (7.5,0) node{\small$t=0$};
		\draw (7.5,5.5) node{\small$t=T$};
		\draw (-3,2) node{\footnotesize$b_{1}$};
		\draw (-2.5,1) node{\footnotesize$b_{2}$};
		\draw (-2,.6) node{\footnotesize$b_{3}$};
		\draw (-.5,.5) node{\footnotesize$b_{4}$};
		\draw (2,1) node{\footnotesize$a_{4}$};
		\draw (.35,3.5) node{\footnotesize$a_{3}$};
		\draw (.35,3.95) node{\footnotesize$a_{2}$};
		\draw (.3,5.3) node{\footnotesize$a_{1}$};
		\draw (-4.2,0) node{\tiny $\bullet$};
		\draw (-2,0) node{\tiny $\bullet$};
		\draw (-3,0) node{\tiny $\bullet$};
		\draw (1,5.5) node{\tiny $\bullet$};
		\draw (3,5.5) node{\tiny $\bullet$};
		\draw (6.5,5.5) node{\tiny $\bullet$};
		\end{tikzpicture}
		\caption{The figure illustrates the approximate solution in $D_2$.}\label{fig-10}
	\end{figure}
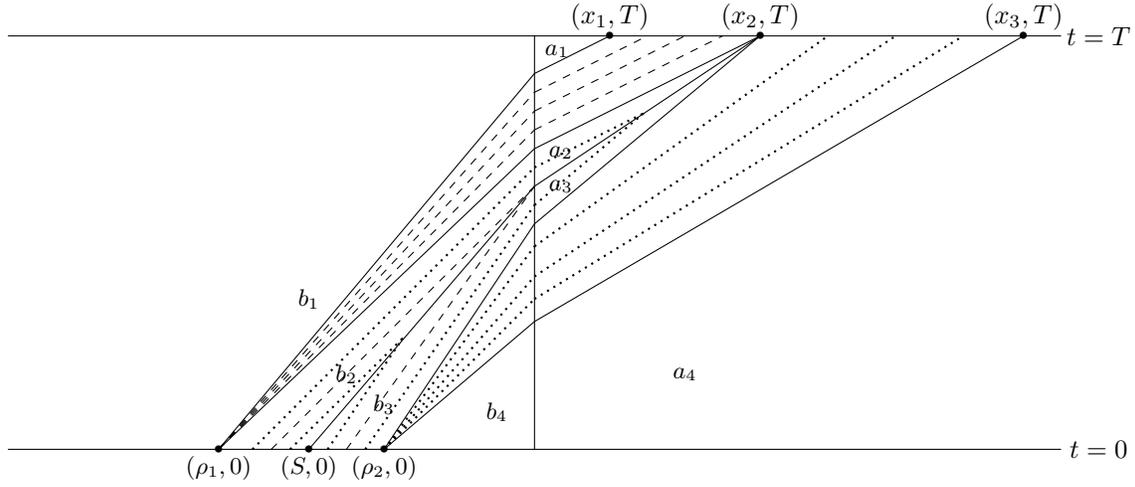
Next we show that the above sequences converges: \\
\noi{\textbf{Convergence Analysis:}}
First we show that $\left\{ || u_{2,0}^N||_\f\right\}$ is uniformly bounded. Let $i_0=\sup\{i:\ \tau_i\leq T/2\}.$ For $i\leq i_0$, we have 
\begin{eqnarray*}
f'(a_i)=\left\{\begin{array}{llll}
\frac{x_l}{T-\tau_{2l}} &\mbox{if}& l=i/2,\\
 \frac{x_l}{T-\tau_{2l-1}} &\mbox{if}& l=(i+1)/2.
\end{array}\right.
\end{eqnarray*}
Hence $f'(a_i)\leq \frac{2R_2}{T}.$
For $i\geq i_0$, then 
\begin{eqnarray*}
g'(b_i)=\left\{\begin{array}{lll}
-\frac{z_{l-1}}{\tau_{2l-1}} &\mbox{if}& l=(i+1)/2,\\
 -\frac{z_l}{\tau_{2l}} &\mbox{if}& l=i/2.
\end{array}\right.
\end{eqnarray*}
Thus, we have $g'(b_i)\leq \frac{2|y_0|}{T}.$ Since $f(a_i)=g(b_i)$, $g'(b_i)\geq 0$, we get $\{b_i\}$ is uniformly bounded in $\R$ and  $\{u_{2,0}^N\}$ is uniformly bounded in $L^\f(\R)$. 
%

First assume that $f$ and $g$ are uniformly convex. Then $h_+$ is a Lipschitz continuous function.
\begin{align*}
TV(g^{\p}(u^N_{2,0}))&=\sum\limits_{i=1}^{2k-1}\abs{g^{\p}(b_{i+1})-g^{\p}(b_i)}\\
&=\sum\limits_{i=1}^{k}\abs{g^{\p}(b_{2i-1})-g^{\p}(b_{2i})}+\sum\limits_{i=1}^{k-1}\abs{g^{\p}(b_{2i})-g^{\p}(b_{2i+1})}\\
&=\sum\limits_{i=1}^{k}\abs{\frac{z_{i-1}}{\tau_{2i-1}}-\frac{z_i}{\tau_{2i}}}+\sum\limits_{i=1}^{k-1}\abs{\frac{z_i}{\tau_{2i}}-\frac{z_{i+1}}{\tau_{2i+1}}}\\
&=\sum\limits_{\tau_{2i}\leq T/2}\abs{\frac{z_{i-1}}{\tau_{2i-1}}-\frac{z_i}{\tau_{2i}}}+\sum\limits_{\tau_{2i+1}\leq T/2}\abs{\frac{z_i}{\tau_{2i}}-\frac{z_{i+1}}{\tau_{2i+1}}}\\
&+\sum\limits_{\tau_{2i}> T/2}\abs{\frac{z_{i-1}}{\tau_{2i-1}}-\frac{z_i}{\tau_{2i}}}+\sum\limits_{\tau_{2i+1}> T/2}\abs{\frac{z_i}{\tau_{2i}}-\frac{z_{i+1}}{\tau_{2i+1}}}\\
&=I_1+I_2,
\end{align*}
where 
\begin{align*}
I_1&=\sum\limits_{\tau_{2i}\leq T/2}\abs{\frac{z_{i-1}}{\tau_{2i-1}}-\frac{z_i}{\tau_{2i}}}+\sum\limits_{\tau_{2i+1}\leq T/2}\abs{\frac{z_i}{\tau_{2i}}-\frac{z_{i+1}}{\tau_{2i+1}}}\\
&=\sum\limits_{\tau_{2i}\leq T/2}\abs{h_+\left(\frac{x_i}{T-\tau_{2i-1}}\right)-h_+\left(\frac{x_i}{T-\tau_{2i}}\right)}+\sum\limits_{\tau_{2i+1}\leq T/2}\abs{h_+\left(\frac{x_i}{T-\tau_{2i}}\right)-h_+\left(\frac{x_{i+1}}{T-\tau_{2i+1}}\right)}.
\end{align*}
As $f,g$ are uniformly convex, we get $h_+$ is a locally Lipschitz function. Due to $\tau_{2i}\leq T/2$, $\tau_{2i+1}\leq T/2$, we obtain $T-\tau_{2i}\geq T/2$, $T-\tau_{2i+1}\geq T/2$, hence
 $\frac{x_i}{T-\tau_{2i}},\frac{x_i}{T-\tau_{2i+1}}$ are bounded. Let $M=$ Lipschitz constant of $h_+$ on $\left[\bar{\theta}_g,\frac{2R_2}{T}\right]$, then 
 \begin{align*}
 I_1&\leq M\left[\sum\limits_{\tau_{2i}\leq T/2}\abs{\frac{x_i}{T-\tau_{2i-1}}-\frac{x_i}{T-\tau_{2i}}}+\sum\limits_{\tau_{2i+1}\leq T/2}\abs{\frac{x_i}{T-\tau_{2i}}-\frac{x_{i+1}}{T-\tau_{2i+1}}}\right]\\
 &\leq \frac{4R_2M}{T^2}\left[\sum\limits_{\tau_{2i}\leq T/2}\abs{\tau_{2i}-\tau_{2i-1}}+\sum\limits_{\tau_{2i+1}\leq T/2}\abs{\tau_{2i+1}-\tau_{2i}}\right]+\frac{4M}{T^2}\sum\limits_{\tau_{2i+1}\leq T/2}\abs{x_i-x_{i+1}}\\
 &\leq\frac{4M}{T^2}\left(R_2+1\right).
 \end{align*}
Since $\{\tau_{2i}\}$ is a decreasing sequence and $\{x_i\}$ is an increasing sequence, we have 
\begin{align*}
I_2&=\sum\limits_{\tau_{2i}> T/2}\abs{\frac{z_{i-1}}{\tau_{2i-1}}-\frac{z_i}{\tau_{2i}}}+\sum\limits_{\tau_{2i+1}> T/2}\abs{\frac{z_i}{\tau_{2i}}-\frac{z_{i+1}}{\tau_{2i+1}}}\\
&\leq \frac{4\abs{y_0}}{T^2}\sum\limits_{i=1}^{2k-1}\abs{\tau_{i+1}-\tau_{i}}+\frac{4}{T^2}\sum\limits_{i=1}^{k-1}\abs{z_{i+1}-z_{i}}\\
&\leq \frac{4}{T^2}\left\{(T-t_1)\abs{y_0}+y_1-y_0\right\}.
\end{align*}

 Therefore, by Helly's Theorem, there exists a subsequence still denoting by $\{g^{\p}(u^N_{2,0})\}$ converges pointwise to $g^{\p}(u_{2,0})$. Hence $\forall y\in\R$,
\[\lim\limits_{N\rr\f}u^N_{2,0}(y)=u_{2,0}(y),\]
and $u_{2,0}\in L^{\f}(\R)$ with 
\begin{equation*}
u_{2,0}(y)=\left\{\begin{array}{rl}
u_-&\mbox{ if }y<y_0,\\
w_-&\mbox{ if }y_1<y<0,\\
\bar{w}_-&\mbox{ if }y>0.
\end{array}\right.
\end{equation*}
Let 
\[y_N(x)=\sum\limits_{i=0}^{k-1}z_i\chi_{[x_i,x_{i+1})}(x).\]
Then
\begin{align}
\abs{y(x)-y_N(x)}&=\abs{\sum\limits_{i=0}^{k-1}(z_i-y(x))\chi_{[x_i,x_{i+1})}}\nonumber\\
&\leq \frac{1}{N}.\nonumber
\end{align}
Thus, $y_N\rr y$ in $L^\f[0,R_2]$. Let $\tau_i(x)=\tilde{t}_i(x,T)$ for $x\in[x_i,x_{i+1}]$. Define
	\begin{equation*}
	t^N(x)=\tau_i(x),\mbox{ if }x\in(x_i,x_{i+1}),
	\end{equation*}
then for a.e. $x\in(0,R_2)$, we have, $t^N$ is a strictly increasing function, $t_1\leq t^N(x)\leq T$ and for a.e., $x\in(0,R_2)$, we have
\begin{equation}\label{eqn:yN-tildeuN}
\begin{array}{rl}
-\frac{y_N(x)}{t^N(x)}&=h_+\left(\frac{x}{T-t^N(x)}\right),\\
u^N_2(x,T)&=(f^{\p})^{-1}\left(\frac{x}{T-t^N(x)}\right),\\
u_2^N(\eta_3(t)+,t)&=u_-,\\
u_2^N(\eta_2(t)-,t)&=w_-,\\
u_2^N(\ti{\eta}_2(t)-,t)&=\bar{w}_-.
\end{array}
\end{equation} 
From the construction, set of discontinuities of $u^N_2$ are discrete set of Lipschitz curves in $\R\times[0,T]$, therefore, from  \descref{xii.}{(xii)} of theorem \ref{theorem3.1}
\begin{equation*}
\int\limits_{\R}\abs{u^{N_1}_2(x,t)-u^{N_2}_2(x,t)}\,dx\leq\int\limits_{y_0}^{y_1}\abs{u^{N_1}_{2,0}(x)-u^{N_2}_{2,0}(x)}\,dx.
\end{equation*}
Subsequently, we have
\begin{align}
\int\limits_{0}^T\int\limits_{\R}\abs{u^{N_1}_2(x,t)-u^{N_2}_2(x,t)}\,dxdt&\leq T\int\limits_{y_0}^{y_1}\abs{u^{N_1}_{2,0}(x)-u^{N_2}_{2,0}(x)}\,dx\nonumber\\
&\rr0\mbox{ as }N_1,N_2\rr\f.\nonumber
\end{align}
Hence for a subsequence still denoted by $\{u^N_2\}$ converges to $u_2$, a solution of \eqref{conlaw-equation} with initial data $u_{2,0}$. From Helly's Theorem, again for a subsequence,
\[\lim\limits_{N\rr\f}t^N(x)=t_+(x).\]

%
%
Then from \eqref{eqn:yN-tildeuN}, letting $N\rr\f$ to obtain for a.e. $x$
\begin{equation}
\begin{array}{rl}
-\frac{y(x)}{t_+(x)}&=h_+\left(\frac{x}{T-t_+(x)}\right),\\
{u}_2(x,T)&=(f^{\p})^{-1}\left(\frac{x}{T-t_+(x)}\right)
\end{array}
\end{equation}
and $u_2$ satisfies the conclusion of the lemma. If $f$ and $g$ are not uniformly convex (and just strictly convex), then approximate $f$ and $g$ by $f_\epsilon$ and $g_\epsilon$ respectively which are uniformly convex and by stability lemma \ref{stability}, the lemma follows as $\e\rr 0.$
\end{proof}

\subsection{Proof of theorem \ref{theorem1.1}}\label{section4.1}
\begin{proof}[Proof of theorem \ref{theorem1.1}]
First we prove that if $u_0\in L^\f(\R)$ and $u$ is the corresponding solution of (\ref{conlaw-equation}),
then $(T,R_1(T), R_2(T), y(\cdot,T))\in \mathcal{R}(T).$ From lemma \ref{lemma3.6}, if $R_1(T)=0$ or there exists a  $0\leq T_1 \leq T$ such that $f'(\bar{\T}_g)=\frac{R_1(T)}{T-T_1}$, then $y(R_1(T)+,T)\geq \xi_0$. Hence $(T, R_1(T), R_2(T), y(T))\in \mathcal{R}(T)$. Conversely, let $(T,R_1,R_2, y(\cdot))\in \mathcal{R}(T),$ define $y_0=y(0+),$ $y_-=y(R_2(T)-),$ $y_+=y(R_1+)$ and $t_0$ by 
\begin{eqnarray*}
h_+\left(\frac{R_2}{T-t_0}\right) &=&-\frac{y_-}{t_0}\mbox{ and define}\\
f'(u_+) &=& \frac{R_1-y_+}{T}, f'(\bar{w}_-)=\frac{R_2}{T-t_0}, g'(w_-)=-\frac{y_-}{t_0},\\
g'(u_-)&=& -\frac{y_0}{T},\\
\ga_1(t)&=& R_1+f'(u_+)(t-T),\\
\ga_2(t) &=& R_2+ f'(\bar{w}_-)(t-T),\\
\eta_2(t) &=& g'(w_-) (t-t_0),\\
\eta_3(t) &=& g'(u_-)(t-T).
\end{eqnarray*}
Let for $0<t<T$, define 
\begin{eqnarray*}
D_1 &=&\{(x,t): \min\{\eta_2(t),0\}<x\leq 0\}\cup \{(x,t):\ \max\{\ga_2(t),0\}<x<\ga_1(t)\},\\
D_2&=&\{(x,t):\ \min\{\eta_3(t),0\}<x<\min\{\eta_2(t),0\}\}\cup\{(x,t):\ 0\leq x<\max\{\ga_2(t),0\}\},\\
D_3&=&\{(x,t):\ x<\eta_3(t)\}\cup \{(x,t):\ x>\ga_1(t)\},\\
I_i&=&\bar{D}_i\cap \R,\ i=1,2,3.
\end{eqnarray*}
From subsection \ref{newsection}, there exists a $u_{1,0}\in L^\f(\R)$ and the corresponding solution $u_1$ such that 
$$u_1(\eta_2(t)+,t)=w_-,\ u_1(\ga_2(t)+,t)=\bar{w}_-, u_1(\ga_1(t)-,t)=u_+.$$
From lemma \ref{lemma6.2}, there exists a $u_{2,0}\in L^\f(\R)$ and the corresponding solution $u_2$ of (\ref{conlaw-equation}) satisfies:
$$u_2(\eta_3(t)+,t)=u_-, u_2(\eta_2(t)-,t)=w_-, u(\ga_2(t)-,t)=\bar{w}_-.$$
From the backward construction \cite{Sco}, there exists a $u_{3,0}\in L^\f(\R)$ and the corresponding solution $u_3$ of (\ref{conlaw-equation}) such that 
$$u_3(\eta_3(t)-,t)=u_-, u_3(\ga_1(t)+,t)=u_+.$$
Therefore by R-H condition if we define 
\begin{eqnarray*}
u_0(x)=\left\{\begin{array}{llll}
u_{1,0}(x) &\mbox{if}& x\in \mbox{Interior of}\ I_1,\\
u_{2,0}(x) &\mbox{if}& x\in \mbox{Interior of}\ I_2,\\
u_{3,0}(x) &\mbox{if}& x\in \mbox{Interior of}\ I_3,
\end{array}\right.
\end{eqnarray*}
then $u$ is the solution of (\ref{conlaw-equation}) with initial data $u_0$ given by
\begin{eqnarray*}
u(x,t)=\left\{\begin{array}{llll}
u_{1}(x,t) &\mbox{if}& (x,t)\in D_1,\\
u_{2}(x,t) &\mbox{if}& (x,t)\in D_2,\\
u_{3}(x,t) &\mbox{if}& (x,t)\in D_3,
\end{array}\right.
\end{eqnarray*}
satisfying $R_i(T)=R_i$, $i=1,2$, $y(\cdot,T)=y(\cdot)$. This proves the theorem.
\end{proof}
\subsection{Proof of theorem \ref{theorem1.3}}\label{exactcontrol}

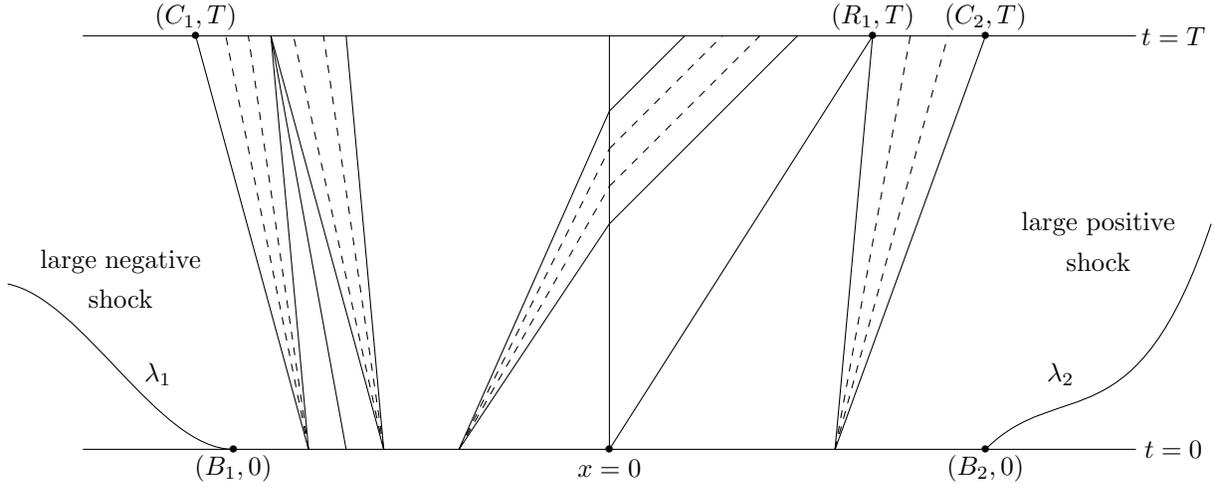
\begin{figure}
	\centering
	\begin{tikzpicture}
	\draw (-7,0)--(7,0);
	\draw (0,0)--(0,5.5);
	\draw (-7,5.5)--(7,5.5);
	\draw (-2,0)--(0,3);\draw (0,3)--(2.5,5.5);
	\draw(-2,0)--(0,4.5);\draw (0,4.5)--(1,5.5);
	\draw[dashed](-2,0)--(0,3.5);\draw[dashed](0,3.5)--(2,5.5);
	\draw[dashed](-2,0)--(0,4);\draw[dashed](0,4)--(1.5,5.5);
	\draw (0,0)--(3.5,5.5);
	\draw (3.5,5.5)--(3,0);
	\draw (3,0)--(5,5.5);
	\draw[dashed](3,0)--(4,5.5);
	\draw[dashed](3,0)--(4.5,5.5);
	\draw (-3,0)--(-3.5,5.5);
	\draw[dashed](-3,0)--(-3.8,5.5);
	\draw[dashed](-3,0)--(-4.2,5.5);
	\draw (-4.5,5.5)--(-3.5,0);
	\draw[dashed](-4,0)--(-4.8,5.5);
	\draw[dashed](-4,0)--(-5.1,5.5);
	\draw(-3,0)--(-4.5,5.5);
	\draw (-4.5,5.5)--(-4,0);
	\draw(-4,0)--(-5.5,5.5);
	\draw (5,0) .. controls (6,1) and (7,0) .. (8,3);
	\draw (-5,0) .. controls (-6,0) and (-7,2) .. (-8,2.2);
	\draw (7.5,0) node{\small$t=0$};
	\draw (7.5,5.5) node{\small$t=T$};
	\draw (0,-.25) node{\small$x=0$};
	\draw (-5,-.25) node{\small$(B_{1},0)$};
	\draw (5,-.25) node{\small$(B_{2},0)$};
	\draw (-5.5,5.75) node{\small$(C_{1},T)$};
	\draw (3.5,5.75) node{\small$(R_1,T)$};
	\draw (5,5.75) node{\small$(C_{2},T)$};
	\draw (-6,1) node{\small$\lambda_{1}$};
	\draw (6,1) node{\small$\lambda_{2}$};
	\draw (6.5,3) node{\small{large positive}};
	\draw (6.5,2.5) node{\small{shock}};
	\draw (-6.5,2.5) node{\small{large negative}};
	\draw (-6.5,2) node{\small{shock}};
	\draw (3.5,5.5) node{\tiny $\bullet$};
	\draw (5,5.5) node{\tiny $\bullet$};
	\draw (-5.5,5.5) node{\tiny $\bullet$};
	\draw (5,0) node{\tiny $\bullet$};
	\draw (-5,0) node{\tiny $\bullet$};
	\draw (0,0) node{\tiny $\bullet$};
	\end{tikzpicture}
		\caption{This figure illustrates the solution for exact control problem.}\label{fig-11}
\end{figure}

\begin{proof}
	Define $\delta_{1}=y(C_{1}+)-B_{1}$, $\delta_{2}=B_{2}-y(C_{2}-)$
	\begin{eqnarray}
	\tilde{y}(x)=\left\{\begin{array}{lll}
	y(x)&\mbox{if}& x\in(C_{1}, R_{2})\cup(R_{1}, C_{2}),\\
	x&\mbox{if}& x<C_{1}, C_{1}<y(C_{1}+),\\
	x&\mbox{if}& x<y(C_{1}+)<C_{1},\\
	y(C_{1}+)&\mbox{if}& y(C_{1}+)<x<C_{1},\\
	x&\mbox{if}& x>C_{2},\\
	x&\mbox{if}& x>y(C_{2}-)>C_{2},\\
	y(C_{2}-)&\mbox{if}& C_{2}<x<y(C_{2}-).\\
	\end{array}\right.
	\end{eqnarray} 
	Let $\tilde{u}_{-}$, $\tilde{u}_{+}$ be defined by 
	\begin{eqnarray}
	g'(\tilde{u}_{-})&=&\frac{C_{1}-y(C_{1}+)}{T},\\
	f'(\tilde{u}_{+})&=&\frac{C_{2}-y(C_{2}-)}{T}
	\end{eqnarray}
	and 
	\begin{eqnarray}
	\gamma_{1}(t)&=&C_{1}+g'(\tilde{u}_{-})(t-T),\\
	\gamma_{2}(t)&=&C_{2}+f'(\tilde{u}_{+})(t-T).
	\end{eqnarray}
	Then from theorem \ref{theorem1.1}, there exists a $\tilde{u}_{0}\in L^{\f}(\R)$ and a solution $\tilde{u}$ with initial data $\tilde{u}_{0}$ such that 
	\begin{eqnarray}
	\tilde{u}(\gamma_{1}(t)+, t)&=&\tilde{u}_{-},\\
	\tilde{u}(\gamma_{2}(t)-, t)&=&\tilde{u}_{+}.
	\end{eqnarray}
	Then the free region lemmas 2.2, 2.3 and 2.4 as in \cite{Sco} (see figure \ref{fig-11} for illustration), one can find  $\lambda_{1}$ large negative number and $\lambda_{2}$ large positive number, such that there exist solutions $u_{2}$ and $u_{3}$ of \eqref{conlaw-equation} with respective initial data $u_{2,0}$ and $u_{3,0}$ given by 
	\begin{eqnarray}
	u_{2,0}=\left\{\begin{array}{lll}
	u_{1, 0}(x)&\mbox{if}&x<B_{1},\\
	\lambda_{1}&\mbox{if}&B_{1}<x<B_{1}+\delta_{1},\\
	\ti{u}_{-}&\mbox{if}&x>B_{1}+\delta_{1},
	\end{array}\right.
	\end{eqnarray}
	\begin{eqnarray}
	u_{3,0}=\left\{\begin{array}{lll}
	u_{1, 0}(x)&\mbox{if}&x>B_{2},\\
	\lambda_{2}&\mbox{if}&B_{2}-\delta_{2}<x<B_{2},\\
	\ti{u}_{+}&\mbox{if}&x<B_{2}-\delta_{2}
	\end{array}\right.
	\end{eqnarray}
	and satisfies
	\begin{eqnarray}
	\tilde{u}_{2}(\gamma_{1}(t)-, t)&=&\tilde{u}_{-},\\
	\tilde{u}_{3}(\gamma_{2}(t)+, t)&=&\tilde{u}_{+}.
	\end{eqnarray}
	Hence define
	\begin{eqnarray}
	u_{0}(x)=\left\{\begin{array}{lll}
	u_{1, 0}(x)&\mbox{if}&x<B_{1},\\
	\lambda_{1}&\mbox{if}&B_{1}<x<B_{1}+\delta_{1},\\
	\tilde{u}_{0}(x)&\mbox{if}&B_{1}+\delta_{1}<x<B_{2}-\delta_{2},\\
	\lambda_{2}&\mbox{if}&B_{2}-\delta_{2}<x<B_{2},\\
	u_{1, 0}(x)&\mbox{if}&B_{2}<x,
	\end{array}\right.
	\end{eqnarray}
	\begin{eqnarray}
	u(x, t)=\left\{\begin{array}{lll}
	u_{2}(x, t)&\mbox{if}& x<\gamma_{1}(t),\\
	\tilde{u}(x, t)&\mbox{if}&\gamma_{1}(t)<x<\gamma_{2}(t),\\
	u_{3}(x, t)&\mbox{if}&x>\gamma_{2}(t).
	\end{array}\right.
	\end{eqnarray}
	Then $(u_{0}, u)$ is the required solution satisfies the Theorem.
\end{proof}
\section{Optimal control}\label{optimalcontrol}
Let $K$ be given, the associated cost functional $J$ and admissible set $\mathcal{A}$ are as in (\ref{cost1}). Then we have the following:
\begin{lemma}
 For $u_0\in \mathcal{A}$, $J(u_0)$ is well defined.
 \begin{proof}
  Because of finite speed of propagation, it is immediate.
 \end{proof}
 \end{lemma}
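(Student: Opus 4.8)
The plan is to read $J(u_0)$ as a sum of three integrals with non-negative integrands and to show that, for every $u_0\in\mathcal{A}$, each integrand is defined a.e.\ and vanishes outside one common bounded interval, so that each of the three integrals is finite. The only mechanism involved is finite speed of propagation, combined with the fact that $\theta_g$ and $\theta_f$ are the minima of $g$ and $f$: far to the left and to the right the solution freezes at these constant states, and there the target field $K$ also vanishes.

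First I would fix $u_0\in\mathcal{A}$, so that $u_0\equiv\theta_g$ on $(-\infty,-M)$ and $u_0\equiv\theta_f$ on $(M,\infty)$ for some $M>0$, and set $L=\sup\{|f'(s)|,|g'(s)|:\ |s|\le\|u\|_{L^{\infty}(\mathbb{R}\times(0,T))}\}$, which is finite since $f,g\in C^1$ and $u\in L^{\infty}$. The control curves $\gamma\in ch(x,T)$ of Definition \ref{CF} are piecewise affine with slopes bounded by $L$, so for $x>M+LT$ every such $\gamma$ satisfies $\gamma(0)\ge x-LT>M$ and, because $x/T>L$, cannot reach the interface $\{x=0\}$ within time $T$. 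Hence $u(x,T)$ is determined solely by $u_0\equiv\theta_f$ on $(M,\infty)$ and equals $\theta_f$; symmetrically $u(x,T)=\theta_g$ for $x<-(M+LT)$. This is precisely where the interface, which normally makes propagation delicate, is shown to be irrelevant for the two tails.

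Next I would match the tails against $K$. By definition $K(x)=f'(k(x))=f'(\theta_f)=0$ for $x>c$ and $K(x)=g'(k(x))=g'(\theta_g)=0$ for $x<-c$. Hence the third integrand equals $|f'(\theta_f)-0|^2=0$ on $\{x>\max(c,M+LT)\}$ and the first integrand equals $|g'(\theta_g)-0|^2=0$ on $\{x<-\max(c,M+LT)\}$, so the first and third integrals reduce to integrals over bounded sets. Finite speed also gives $R_2(T)\le R_1(T)<\infty$, so the middle integral runs over the bounded interval $(0,R_2(T))$; on it Lemma \ref{lemma3.1} yields $f'(u(x,T))=\frac{x}{T-t_+(x,T)}\in I_+=[f'(\bar{\theta}_g),\infty)$ for a.e.\ $x$, whence $u(x,T)\ge\bar{\theta}_g$ and $f(u(x,T))\ge f(\bar{\theta}_g)=g(\theta_g)$. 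Thus $f(u(x,T))$ lies in the range $[g(\theta_g),\infty)$ of $g_+$ and the composition $g'\circ g_+^{-1}\circ f(u(x,T))$ is legitimate. Since $u(\cdot,T)\in L^{\infty}$ while $f'$, $g'$, $g'\circ g_+^{-1}\circ f$ and $K$ are bounded on bounded sets, the three integrands are bounded on the compact set carrying them, so every integral is finite and $J(u_0)$ is well defined.

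The argument is essentially routine---this is the author's ``immediate''---and the only step that must be justified rather than asserted is the finite-speed-of-propagation statement for the discontinuous-flux equation: one has to verify that control curves arriving far from the interface have slope at most $L$, hence neither reach $\{x=0\}$ nor see anything but the constant data, so that the tails freeze at $\theta_f$ and $\theta_g$. Checking, through Lemma \ref{lemma3.1}, that the composition in the middle term is well-defined is the only other point that needs a line of verification.
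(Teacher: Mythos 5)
Your proposal is correct and follows the paper's own (one-line) argument: the paper simply invokes finite speed of propagation, and your write-up is exactly that mechanism carried out in detail --- the tails freeze at $\theta_f$, $\theta_g$ where $K$ vanishes, the middle term lives on the bounded interval $(0,R_2(T))$ where $u(\cdot,T)\ge\bar{\theta}_g$ makes the composition $g'\circ g_+^{-1}\circ f$ legitimate, and all integrands are bounded on the remaining compact sets. No discrepancy with the paper's approach; you have merely supplied the verification the authors deemed immediate.
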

\subsection{Proof of theorem \ref{theorem1.2}}
\begin{proof}[Proof of theorem \ref{theorem1.2}]
	Proof involves several steps,\\
	
	\begin{description}[font=\normalfont]
\descitem{Step 1:}{thm1-step-1} Let 	
	
$$\mathcal{\tilde{A}}=\{(T, R_{1}, R_{2}, y(\cdot))\in R(T): y(x)=x \mbox{ outside a compact set } \}.
$$
For $\alpha=(T, R_{1}, R_2, y(\cdot))\in\mathcal{\tilde{A}}$, define 
$$h_+\left(\frac{x}{T-t_+(x)}\right)=-\frac{y(x)}{t_+(x)},\ \mbox{for}\ x\in (0,R_2)\ \mbox{and}$$
\begin{eqnarray}
\mathcal{\tilde{J}}(\alpha)&=\int\limits_{-\f}^{0}\left|\frac{x-y(x)}{T}-K(x)\right|^2dx+\int\limits_{0}^{R_{2}}\left|\frac{y(x)}{t_+(x)}+K(x)\right|^2dx+\int\limits_{R_{2}}^{R_{1}}\left|\bar{\theta}_{g}-K(x)\right|^2dx\\\nonumber&+\int\limits_{R_{1}}^{\f}\left|\frac{x-y(x)}{T}-K(x)\right|^2dx.
\end{eqnarray}
	\begin{eqnarray*}
	R_{0}&=&\frac{f(\bar{\theta}_{g})-f(\theta_{f})}{\bar{\theta}_{g}-\theta_{f}}T,\\
	M_{1}&=&\int_{-\f}^{0}|K(x)|^2dx+\int_{R_{0}}^{\f}|K(x)|^2dx+\int_{0}^{R_{0}}|\bar{\theta}_{g}-K(x)|^2dx.
	\end{eqnarray*}
	Then
	\begin{equation*}
	\inf_{\alpha\in\mathcal{\tilde{A}}}\mathcal{\tilde{J}}(\alpha)\le \inf_{u_{0}\in\mathcal{A}}\mathcal{J}(u_{0})\le M_{1}.
	\end{equation*}
	\begin{proof}[Proof of \descref{thm1-step-1}{Step 1}:]
	Let
	\begin{eqnarray*}
	w_{0}(x)=\left\{\begin{array}{lll}
		\theta_{g}&\mbox{if}&x<0,\\
		\theta_{f}&\mbox{if}&x>0.
	\end{array}\right.
\end{eqnarray*}
Then $w$ is the solution to \eqref{conlaw-equation} with data  $w_{0}$, where 
	\begin{eqnarray*}
w(x, t)=\left\{\begin{array}{lll}
\theta_{g}&\mbox{if}&x<0,\\
\bar{\theta}_{g}&\mbox{ if }&0<x<\frac{f(\bar{\theta}_{g})-f(\theta_{g})}{\bar{\theta}_{g}-\theta_{g}}t,\\
\theta_{f}&\mbox{if}&x>\frac{f(\bar{\theta}_{g})-f(\theta_{g})}{\bar{\theta}_{g}-\theta_{g}}t,
\end{array}\right.
\end{eqnarray*}
here $y(x, t)=x$ for $x\in(-\f, 0)\cup(R_{0}, \f)$. Since from theorem \ref{theorem1.1} we have 
\begin{equation*}
      \inf_{\al \in\ti{\mathcal{A}}}\mathcal{\ti{J}}(\al)\le \inf_{u_0 \in\mathcal{A}}J(u_0)
       \ \mbox{and}\ 
  \inf_{u_{0}\in\mathcal{A}}J(u_{0})\le J(w_{0})=M_{1}.
\end{equation*}
This proves \descref{thm1-step-1}{Step 1}.
\end{proof}
\descitem{Step 2:}{thm1-step-2} Let $\mathcal{\tilde{A}}_{1}=\{\alpha\in\mathcal{\ti{A}}: \mathcal{\tilde{J}}(\alpha)\le 2M_{1}\}$, then there exists a constant $M_{2}>0$, $c_{1}=\max\{M_{2}, c\}$ such that  for all $\alpha=(T, R_{1}, R_2, y(\cdot))$ with $\mathcal{\tilde{J}}(\alpha)\le 2M_{1}$,
\begin{description}
	\descitem{(i)}{step2i} $R_{1}\le M_{2}$,\\
	\descitem {(ii)}{step2ii} $|y(0+)|\le (18T^2M_{1})^{1/3}$,\\
	\descitem{(iii)}{step2iii} $y(-c_{1})\ge -(c_{1}+(6T^2M_{1})^{1/3})$,\\
	\descitem{(iv)}{step2iv} $y(c_{1})\le (c_{1}+(12M_{1}T^2)^{1/3})$.
\end{description}
\begin{proof}[Proof of \descref{thm1-step-2}{Step 2}]
	Suppose $R_{1}>c+f'(\bar{\theta}_{g})T$, then the line $R_{1}+f'(\bar{\theta}_{g})(t-T)$ does not intersect the $t$ axis for $t>0$. Subsequently, we have $R_{2}=R_{1}$ and $t(R_1-)\le t_+(x) \leq t(c+)<T$, for all $x\in (c, R_{1})$. Since $K(x)=0$ for $x>c$, we get
	\begin{eqnarray*}
	2M_{1}\ge \mathcal{\tilde{J}}(\alpha)&\ge& \int\limits_{c}^{R_{1}}\left|\frac{y(x)}{t_+(x)}\right|^2dx,\\
	&=&\int\limits_{c}^{R_{1}}\left|h_{+}\left(\frac{x}{T-t_+(x)}\right)\right|^2dx,\\
	&\ge&\int\limits_{c}^{R_{1}}\left|h_{+}\left(\frac{x}{T-t_+(R_1-)}\right)\right|^2dx\rr\f, \mbox{ as $R_{1}$ }\rr\f,
	\end{eqnarray*}
	which is a contradiction. Therefore, there exists $M_{2}>0$ such that $R_{1}\le M_{2}$.
	
	Denote $y(0+)=y(0)$ and for $y(0)<x<0$, then we have $y(x)\le y(0)<x<0$ and $0\leq x-y(0)\le x-y(x)$. This gives
	\begin{eqnarray*}
	2M_{1}\ge \mathcal{\tilde{J}}(\alpha)&\ge& \int_{y(0)}^{0}\left|\frac{x-y(x)}{T}-K(x)\right|^2dx,\\
	&=& \frac{1}{2}\int_{y(0)}^{0}\left|\frac{x-y(x)}{T}\right|^2dx- \int_{y(0)}^{0}\left|K(x)\right|^2dx,\\
	&\ge&\frac{1}{2}\int_{y(0)}^{0}\left|\frac{x-y(0)}{T}\right|^2dx- M_{1}.\\
	\end{eqnarray*} 
	Hence 
	\begin{equation*}
	18M_{1}T^{2}\ge 3\int_{y(0)}^{0}(x-y(0))^{2}dx=-y(0)^{3}.
	\end{equation*}
	This proves \descref{step2i}{(i)}.
	
	Now we write $y(-c_{1})=y(-c_{1}+)$, $y(c_{1})=y(c_{1}-)$. If $y(-c_{1})\ge -c_{1}$ then \descref{step2iii}{(iii)} is obvious. Thus we assume that $y(-c_{1})<-c_{1}$, then for $y(-c_{1})<x<-c_{1}$, we have $y(x)\le y(-c_{1})$ and $x-y(x)\ge x-y(-c_{1})\ge0$. Since $K(x)=0$ for $x<-c_{1}$, we obtain 
	\begin{eqnarray}
	2M_{1}\ge \mathcal{\tilde{J}}(\alpha)&\ge& \int_{y(-c_{1})}^{-c_{1}}\left|\frac{x-y(x)}{T}\right|^2dx,\\
	&\ge& \frac{1}{T^2}\int_{y(-c_{1})}^{-c_{1}}\left| x-y(-c_{1})\right|^2dx,\\
	&=&\frac{1}{3T^2}\left(-c_{1}-y(-c_{1})\right)^3.
	\end{eqnarray}  
	That is,
	\begin{equation*}
	y(-c_{1})\ge -(c_{1}+(16M_{1}T^2)^{1/3}).
	\end{equation*}
	This proves \descref{step2iii}{(iii)}.
\end{proof}

Similarly if $y(c_{1})\le c_{1}$, there is nothing to prove. Hence we assume that $c_{1}<y(c_{1})$. By the choice of $c_{1}$, we have $c_{1}>R_{1}$ and subsequently we get $y(c_{1})\le y(x)$ for $c_{1}<x<y(c_{1})$. Now it follows that $x-y(x)\le x-y(c_{1})\le 0$. Due to $c_{1}>c$ we have $K(x)=0$ for $x\in(c_{1}, y(c_{1}))$, therefore 
	\begin{eqnarray*}
2M_{1}\ge \mathcal{\tilde{J}}(\alpha)&\ge& \int\limits_{c_{1}}^{y(c_{1})}\left|\frac{x-y(x)}{T}\right|^2dx,\\
&=& \frac{1}{T^2}\int\limits_{c_{1}}^{y(c_{1})}\left| x-y(x)\right|^2dx,\\
&\geq & \frac{1}{T^2}\int\limits_{c_{1}}^{y(c_{1})}\left| x-y(c_{1})\right|^2dx,\\
&=&\frac{1}{3T^2}\left(y(c_{1})-c_{1}\right)^3.
\end{eqnarray*} 
Thus $y(c_{1})\le (c_{1}+(6T^2M_{1})^{1/3})$ and it proves \descref{step2iv}{(iv)}. This completes the proof of \descref{thm1-step-2}{Step 2}.

\descitem{Step 3:}{thm1-step-3} Define $M_{3}=c_{1}+(18T^2M_{1})^{1/3}$ and 
\begin{equation*}
\mathcal{\tilde{A}}_{2}=\{\alpha=(T, R_{1}, R_2, y(\cdot))\in\mathcal{\tilde{A}}: y(x)=x \mbox{ if }x\notin[-M_{3}, M_{3}], R_{1}\le M_{2}, |y(0+)|\le(18M_{1}T^2)^{1/3}\}.
\end{equation*} Then
\begin{equation*}
\inf_{\alpha\in\mathcal{\tilde{A}}_{2}}\mathcal{\tilde{J}}(\alpha)\le \inf_{\alpha\in\mathcal{\ti{A}}}\mathcal{\ti{J}}(\alpha).
\end{equation*}
\begin{proof}[Proof of \descref{thm1-step-3}{Step 3}]
	From \descref{thm1-step-1}{Step 1}, we have 
	\begin{equation*}
	\inf_{\alpha\in\mathcal{\tilde{A}}_{1}}\mathcal{\tilde{J}}(\alpha)\le \inf_{\alpha\in\mathcal{\tilde{A}}}\mathcal{\tilde{J}}(\alpha).
	\end{equation*}
	Let $M_{1}, M_{2}, M_{3}$ and $c_{1}$ defined as above. Let $\alpha=(T, R_{1}, R_2, y(\cdot))\in \mathcal{\tilde{A}}_{1}$. Then from \descref{thm1-step-2}{Step 2}, $R_{1}\le M_{2}$, $|y(0+)|\le(18M_{1}T^2)^{1/3}$ and, $y(-c_{1})\ge -M_{3}$, $y(c_{1})\le M_{3}$. Let $c_1>R_1$ and define
	\begin{eqnarray*}
	\tilde{y}(x)=\left\{\begin{array}{llll}
	y(-c_{1})&\mbox{if}& x\in(\min(-c_{1}, y(-c_{1})), -c_{1}),\\
	y(x)&\mbox{if}& x\in(-c_{1}, R_{2})\cup(R_{1}, c_{1}),\\
	y(c_{1})&\mbox{if}& x\in(c_{1}, \max(c_{1}, y(c_{1}))),\\
	x&&\mbox{otherwise}.
	\end{array}\right.
	\end{eqnarray*}
	Then $\tilde{\alpha}=(T, R_{1}, R_{2}, \tilde{y}(\cdot))\in \mathcal{\tilde{A}}_{2}$ and 
	\begin{eqnarray*}
	\mathcal{\tilde{J}}(\tilde{\alpha})-\mathcal{\tilde{J}}(\alpha)&\le&\int\limits_{\min(-c_{1}, y(-c_{1}))}^{-c_{1}}\left|\frac{x-y(-c_{1})}{T}\right|^2dx-\int\limits_{\min(-c_{1}, y(-c_{1}))}^{-c_{1}}\left|\frac{x-y(x)}{T}\right|^2dx\\&+&\int\limits_{c_{2}}^{\max(c_{2}, y(c_{2}))}\left|\frac{x-y(c_{2})}{T}\right|^2dx-\int\limits_{c_{1}}^{\max(c_{1}, y(c_{1}))}\left|\frac{x-y(x)}{T}\right|^2dx\\
	&\le&0.
	\end{eqnarray*}
	Since $y(x)\le y(-c_{1})$, for $x\in(\min(-c_{1}, y(-c_{1})),-c_{1})$ and  $y(x)\ge y(c_{1})$ for $x\in(c_{1}, \max(c_{1}, y(c_{1})))$, we get
	\begin{equation*}
	\mathcal{\tilde{J}}(\tilde{\alpha})\le \mathcal{\tilde{J}}(\alpha).
	\end{equation*}
	Due to $\mathcal{\tilde{A}}_{2}\subset\mathcal{\tilde{A}}$, we have 
	\begin{equation*}
	\inf_{\alpha\in\mathcal{\tilde{A}}_{2}}\mathcal{\tilde{J}}(\alpha)\le \inf_{\alpha\in\mathcal{\tilde{A}}}\mathcal{\tilde{J}}(\alpha)\le \inf_{\alpha\in\mathcal{\tilde{A}}}\mathcal{\tilde{J}}(\alpha)\le \inf_{\alpha\in\mathcal{\tilde{A}}_{2}}\mathcal{\ti{J}}(\alpha).
	\end{equation*}
	This proves 
	\begin{equation*}
	\inf_{\alpha\in\mathcal{\tilde{A}}_{2}}\mathcal{\tilde{J}}(\alpha)= \inf_{\alpha\in\mathcal{\tilde{A}}}\mathcal{\ti{J}}(\alpha).
	\end{equation*}
\end{proof}
\descitem{Step 4:}{thm1-step-4} Let $\{\alpha_{k}\}\in\mathcal{\tilde{A}}_{2}$ be a sequence such that
\begin{equation*}
\lim\limits_{k\rr\f}\mathcal{\tilde{J}}(\alpha_{k})= \inf_{\alpha\in\mathcal{\tilde{A}}}\mathcal{\ti{J}}(\alpha).
\end{equation*} 
Let $\alpha_{k}=(T, R_{1,k}, R_{2,k}, y_{k}(\cdot))$, as $\alpha_{k}\in\mathcal{\tilde{A}}_{3}$, we have $\{R_{1,k}\}, \{R_{2,k}\}$ are bounded and $y_{k}|_{[-M_{3}, M_{3}]}$ is a bounded non-decreasing function. Hence for a subsequence still denoted by $\alpha_{k}$ such that $\alpha_{k}\rr\alpha_{0}=(T, R_{1}, R_2, y(\cdot))\in\mathcal{\tilde{A}}_{2}\subset\mathcal{\tilde{A}}$ and 
\begin{equation*}
\mathcal{\tilde{J}}(\alpha_{0})=\inf_{\alpha\in\mathcal{\tilde{A}}}\mathcal{\tilde{J}}(\alpha).
\end{equation*}
Since $\alpha_{0}\in\mathcal{\tilde{A}}$ we get $\alpha_{0}=(T, R_{1}, R_2, y(\cdot))\in R(T)$, therefore from Theorem \ref{theorem1.1} there exists a $u_{0}\in L^{\f}(\R)$ and the corresponding solution $u$ of \eqref{conlaw-equation} satisfying $R_{1}=R_{1}(T)$, $R_{2}=R_{2}(T)$ and $y(x)=y(x, T)$. As $y(x)=x$ for $x\in(-M_{3}, M_{3})$ we obtain 
\begin{eqnarray}
u_{0}(x)=\left\{\begin{array}{lll}
\theta_{g}&\mbox{if}&x<-M_{3},\\
\theta_{f}&\mbox{if}&x>M_{3},
\end{array}\right. 
\end{eqnarray}
then $u_{0}\in\mathcal{A}$. Hence
\begin{equation*}
\mathcal{J}(u_{0})=\inf_{w_{0}\in\mathcal{A}}\mathcal{J}(w_{0})
\end{equation*}
has a solution. This proves the theorem.
	\end{description}
\end{proof}
\section{Reachable set for $(A,B)$ connection}\label{connection} 
\begin{definition}
Let $A\geq \T_f$, $B\leq \T_g$ is called a $(A,B)$ connection if $f(A)=g(B)$.
\end{definition}
So far in this article we considered the case $A=\T_f$ or $B=\T_g$. Therefore from now onwards  we assume that $f'(A)>0$, $g'(B)<0$. 
\begin{definition}
$u$ is called a $(A,B)$ entropy solution of (\ref{conlaw-equation}) with initial data $u_0$ if $u$ is the solution obtained from the Hamilton-Jacobi method as in \cite{Jde}, associated to given $(A,B)$ connection.
\end{definition}
\par Let $L_1\leq R_1$ and $0\leq T_1 , T_2 \leq T$ be such that $f'(B)=\frac{R_1}{T-T_1}$, $g'(A)=\frac{L_1}{T-T_2}.$ Let $\bar{B} \leq \T_f\leq  B$, $A\leq \T_g \leq \bar{A}$ be such that $f(B)=f(\bar{B})$, $g(A)=g(\bar{A})$. Let $(\tau_0^+, \xi_0^+, s_{\xi_0}^+)$,  $(\tau_0^-, \xi_0^-, s_{\xi_0}^-)$ be constructed as in lemma \ref{3.6} for $(R_1, T_1)$ with $\bar{\al}=B$ (for the flux $f$) and $(L_1, T_2)$ with  $\bar{\bar{\al}}=A$ (for the flux $g$) respectively. 
\begin{definition}(Reachable set) Let $(T, L_1, R_1, y(\cdot))$ is called an element in the reachable set $\mathcal{R}^{A,B}(T)$ if they satisfy one of the following conditions:
\begin{itemize}
\item[1.] $y:(-\f, L_1)\cup (R_1,\f)\rr \R$ be a non decreasing function such that 
\begin{eqnarray*}
\left\{\begin{array}{lll}
y(x)\leq 0 &\mbox{if}& x<L_1,\\
y(x)\geq 0 &\mbox{if}& x>R_1.\end{array}\right.
\end{eqnarray*}
Suppose there exist $0\leq T_1, T_2 \leq T$ such that 
\begin{eqnarray*}
f'(B)=\frac{R_1}{T-T_1}, g'(A)=\frac{L_1}{T-T_2}, 
\end{eqnarray*}
then 
\begin{eqnarray}\label{condition}
y(L_1-,T)\leq \xi_0^-, \ y(R_1+,T)\geq \xi_0^+.
\end{eqnarray}
\item[2.] If $R_1\geq 0, L_1=0$, then $y:(-\f,R_1)\cup(R_1,\f)\rr \R$ be a non decreasing function with 
\begin{eqnarray*}
\left\{\begin{array}{lll}
y(x)\leq 0 &\mbox{if}& x<R_1,\\
y(x)\geq 0 &\mbox{if}& x>R_1,\\
y(L_1-, T) \geq \xi_0^-.
\end{array}\right.
\end{eqnarray*}
\item[3.] If $R_1=0$, $L_1\leq 0$, then $y:(-\f, L_1)\cup (L_1,\f)\rr \R$ be a non decreasing function with 
\begin{eqnarray*}
\left\{\begin{array}{lll}
y(x)\leq 0 &\mbox{if}& x<L_1,\\
y(x)\geq 0 &\mbox{if}& x>L_1,\\
y(R_1+, T) \geq \xi_0^+.
\end{array}\right.
\end{eqnarray*}
\item[4.] In all the cases, the following must hold:
\begin{eqnarray*}
\sup_x |x-y(x)|<\f.
\end{eqnarray*}
\end{itemize}
\end{definition}
Then we have the following:
\begin{theorem}[characterization of $\mathcal{R}^{A,B}(T)$] $(T,L_1, R_1,y(\cdot))\in $$\mathcal{R}^{A,B}(T)$ if and only id there exist a $u_0\in L^\f(\R)$ and the corresponding $(A,B)$ entropy solution $u$ of (\ref{conlaw-equation}) satisfy 
\begin{eqnarray*}
(T, L_1, R_1,y(\cdot)) = (T, L_1(T), R_1(T), y(\cdot,T)), 
\end{eqnarray*}
where $(L_1(T), R_1(T), y(\cdot, T))$ are defined by $u$.
\end{theorem}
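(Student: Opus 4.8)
The plan is to transcribe the proof of Theorem \ref{theorem1.1} (subsection \ref{section4.1}) into the $(A,B)$ setting, replacing the explicit Hamilton--Jacobi/Lax--Oleinik formula of \cite{Kyoto} by its counterpart for the $(A,B)$ entropy solution obtained in \cite{Jde}. The one genuinely new feature is that, for a true connection with $f'(A)>0$ and $g'(B)<0$, reflected characteristics may occur on \emph{both} sides of the interface at once, whereas under the standing assumption $f(\theta_f)\le g(\theta_g)$ one always had $L_1(t)\equiv 0$ and reflection only in $x>0$. Consequently every one-sided object built earlier (the triple $(\tau_0,\xi_0,s_{\xi_0})$ of Lemma \ref{3.6}, the no-forward-rarefaction Lemma \ref{2.2}, and the backward blocks of subsection \ref{newsection} and Lemma \ref{lemma6.2}) must now be produced symmetrically on the $B$-side ($x>0$, flux $f$, barrier $B$, $\bar B$) and on the $A$-side ($x<0$, flux $g$, barrier $A$, $\bar A$).

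For necessity (``only if''), given $u_0\in L^\f(\R)$ with $(A,B)$ entropy solution $u$, the monotonicity of $x\mapsto y(x,T)$, the sign conditions, and the bound $\sup_x|x-y(x,T)|<\f$ all follow from the $(A,B)$ analogue of Theorem \ref{theorem3.1} furnished by \cite{Jde}. The two interface constraints $y(R_1+,T)\ge\xi_0^+$ and $y(L_1-,T)\le\xi_0^-$ are obtained by invoking the $(A,B)$ analogue of Lemma \ref{lemma3.6} once on each side: on $x>0$ with flux $f$ and $\bar\al=B$ to get $\xi_0^+\le y(R_1(T)+,T)$, and by the mirror (reflection across $\{x=0\}$) version on $x<0$ with flux $g$ and barrier $A$ to get $y(L_1(T)-,T)\le\xi_0^-$. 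The comparison of the $R_1$-curve (resp.\ $L_1$-curve) with the barrier curve $s_{\xi_0}$ carried out in Steps~1--5 of Lemma \ref{lemma3.6} goes through verbatim once $\theta_f,\theta_g,\bar{\bar{\theta}}_g,\bar{\theta}_g$ are replaced by the connection-adapted values $B,A,\bar B,\bar A$.

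For sufficiency (``if''), given $(T,L_1,R_1,y(\cdot))\in\mathcal{R}^{A,B}(T)$, I partition $\R\times(0,T)$ into domains $D_1,D_2,D_3$ as in subsection \ref{section4.1}, but now with a \emph{left} reflected region and a \emph{right} reflected region, both of type $D_1$. Concretely: near the $R_1$-curve I run the reflected construction of subsection \ref{newsection} with the right substitution and with $\xi_0\mapsto\xi_0^+$, feasible because $\xi_0^+\le y(R_1+)$; near the $L_1$-curve I run its mirror image, feasible because $y(L_1-)\le\xi_0^-$; in the two transitional interface zones I build data and solution by the $(A,B)$ version of Lemma \ref{lemma6.2} (which along the way determines $R_2(T)$ and $L_2(T)$); and in the far field I use the classical backward construction of \cite{Sco}. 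I then define $u_0$ piecewise on the interiors of $I_i=\bar D_i\cap\R$ and check that the assembled $u$ is a genuine $(A,B)$ entropy solution: the interior entropy inequality holds on each block by construction, while along the shared curves $\eta_i,\gamma_i$ and along $\{x=0\}$ the Rankine--Hugoniot relation $f(u(0+,t))=g(u(0-,t))$ together with the interface entropy condition glues the blocks into one solution realizing $(T,L_1(T),R_1(T),y(\cdot,T))=(T,L_1,R_1,y(\cdot))$.

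The main obstacle is the two-sided interface gluing. In Theorem \ref{theorem1.1} the left half contributed nothing new ($L_1(t)\equiv 0$), so only a single right reflected region abutted a single middle block. Here two independent reflected regions, governed by $\xi_0^+$ and $\xi_0^-$, meet the $t$-axis from opposite sides, and one must verify that the traces $u(0+,t)$ and $u(0-,t)$ produced by the two constructions are compatible through $f(u(0+,t))=g(u(0-,t))$ for a.e.\ $t$, and that the interface entropy condition $\mathcal{L}^1\{t:f'(u(0+,t))>0,\ g'(u(0-,t))<0\}=0$ is respected along the whole time axis, in particular at the detachment times $\tau_0^\pm$ where the reflected fans leave the interface. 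Establishing this compatibility, and the corresponding two-sided form of Lemma \ref{lemma3.6}, is where the real work lies; once it is in place, the stability Lemma \ref{stability} and the $L^1$-contraction property of Theorem \ref{theorem3.1} pass the approximate constructions to the limit exactly as in the proofs of Lemma \ref{lemma6.2} and Theorem \ref{theorem1.1}.
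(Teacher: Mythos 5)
Your proposal takes essentially the same route as the paper: necessity via the two-sided (mirrored) analogue of Lemma \ref{lemma3.6}, and sufficiency via the $D_1,D_2,D_3$ decomposition with the maps $h_\pm$, the reflected construction of subsection \ref{newsection} on each side, the $(A,B)$ version of Lemma \ref{lemma6.2}, and the far-field construction of \cite{Sco}, all glued by the Rankine--Hugoniot condition and passed to the limit with Lemma \ref{stability}. The only remark worth adding is that the interface compatibility you flag as ``the real work'' is automatic in the paper's explicit $D_1$ construction: the traces at $x=0$ are always connection states ($A$ or $\bar{A}$ on the left, $B$ or $\bar{B}$ on the right), whose flux values all coincide with $f(B)=g(A)$, so the Rankine--Hugoniot relation and the $(A,B)$ interface admissibility hold by construction rather than requiring a separate matching argument.
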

As earlier we can decompose the domain $\R\times (0,T)$ into three disjoints regions $D_1, D_2$ and $D_3$. Here we only sketch the proof of backward construction and the rest follows as earlier. 
\begin{itemize}
\item[I.] Backward construction (continuous and shock solutions): Define 
\begin{itemize}
\item[(i)] $h_+:[f'(B),\f)\rr [g'(\bar{A},\f)$ by $h_+= g'\circ g_+^{-1}\circ f \circ f'.$
\item[(ii)]  $h_-:(-\f,g'(A)]\rr (-\f,f'(\bar{A})]$ by $h_-= f' \circ f_-^{-1} \circ g \circ g'.$
\end{itemize}
Then $h_\pm$ are isomorphisms and by R-H condition across the interface, using $h_\pm$ it follows as in earlier case 
\begin{itemize}
\item[(i)] There are no forward rarefaction from the interface.
\item[(ii)] Continuous and shock solutions are constructed.
\item[(iii)] Using this and $L^1$-contractivity for solutions with discrete set of discontinuities of the solution, one can glue them to obtain a solution in $D_2\cup D_3$, where $D_2$ and $D_3$ are described earlier. 
\end{itemize}

\item[II.] Backward construction in $D_1$. This is the case where the $(A, B)$ entropy exist. Assume that $(T,L_1,R_1,y(\cdot))\in \mathcal{R}^{A,B}(T)$ satisfies (\ref{condition}) with 
\begin{eqnarray*}
y(x)=\left\{\begin{array}{lll}
y_+ &\mbox{if}& x>R_1,\\
y_- &\mbox{if}& x<L_1.
\end{array}\right.
\end{eqnarray*}
$f'(B)=\frac{R_1}{T-T_1}$, $g'(A)=\frac{L_1}{T-T_2}.$
Let $(\tau_0^\pm, \xi_0^\pm, s_{\xi_0}^\pm)$ be as defined earlier. Define 
\begin{eqnarray*}
f'(\beta_-)&=&\frac{R_1-\xi_0^+}{T},\ g'(\beta_-)=\frac{L_1-\xi_0^-}{T},\\
f'(u_+) &=& \frac{R_1-y_+}{T}, \ g'(u_-)=\frac{L_1-y_-}{T},\\
\ga_1(t)&=&R_1+\frac{f(u_+)-f(\beta_+)}{u_+-\beta_+}(t-T),\\
\ga_2(t)&=& R_1 + f'(\beta_+)(t-T),\\
\ga_3(t) &=& -f'(\bar{\beta})(t-\tau_0^-),\\
\eta_3(t) &=& -g'(\bar{A})(t-\tau_0^+),\\
\eta_2(t) &=& L_1 + g'(\beta_-)(t-T),\\
\eta_1(t) &=& L_1+ \frac{g(\beta_)--g(u_-)}{\beta_--u_-}(t-T).
\end{eqnarray*}
Since $(T, L_1, R_1, y(\cdot))\in \mathcal{R}^{A,B}(T)$, hence by condition (\ref{condition}) and convexity of $f$
 and $g$, we have $y_-\leq \xi_1^- \leq \xi_0^-\leq 0 \leq \xi_0^+ \leq \xi_1^+ \leq y_+$ where 
 $\xi_1^-=\eta_2(0)$, $\xi_1^+=\xi_2(0)$. 
 Define 
 \begin{eqnarray*}
 u_{1,0} (x)=\left\{\begin{array}{llll}
 u_- &\mbox{if}& x<\xi_1^-,\\
 \beta_- &\mbox{if}& \xi_1^-<x\xi_0^-,\\
 \bar{A} &\mbox{if}& \xi_0^-<x<0,\\
 \bar{B} &\mbox{if}& 0<x<\xi_0^+,\\
 \beta_+ &\mbox{if}& \xi_0^+<x<\xi_1^+,\\
 u_+ &\mbox{if}& x>\xi_1^+
 \end{array}\right.
 \end{eqnarray*}
and for $0<t\leq T,$ (see figure \ref{fig-12} for illustration)
 \begin{eqnarray*}
 u_{1} (x,t)=\left\{\begin{array}{lllllllllllllllll}
u_- &\mbox{if}& x<\eta_1(t),\\
\beta_- &\mbox{if}& \eta_1(t)<x<\eta_2(t),\\
g'^{-1}\left(\frac{x-\xi_0^-}{t}\right) &\mbox{if}& \eta_2(t)<x<\xi_0^-(t), \tau_0^- \leq t <T,\\
&\mbox{if}& \eta_2(t)<x<\eta_3(t), 0<t<\tau_0^-,\\
\bar{A} &\mbox{if}& \eta_3(t)<x<0, 0<t<\tau_0^- ,\\
A &\mbox{if}& s_{\xi_0^-}(t)<x<0, \tau_0^-<x<0, \\
B &\mbox{if}& 0<x<s_{\xi_0^+}(t), \\
\bar{B}&\mbox{if}& 0<x<\ga_3(t), 0<t<\tau_0^+ \\
 f'^{-1}\left(\frac{x-\xi_0^+}{t}\right)  &\mbox{if}& s_{\xi_0^+}(t)<x<\ga_2(t), \tau_0^+\leq t <T\\
&\mbox{if}& \ga_3(t)<x<\ga_2(t), 0<t<\tau_0^+\\
\beta_+&\mbox{if}& \ga_2(t)<x<\ga_3(t)\\
u_+ &\mbox{if}& x>\ga_1(t).
 \end{array}\right.
 \end{eqnarray*}
Then $u_1$ is the $(A,B)$ entropy solution of (\ref{conlaw-equation}) with $u_{1,0}$ as the  initial data. 
\end{itemize}

\begin{figure}
	\centering
	\begin{tikzpicture}
	\draw (-7,0)--(7,0);
	\draw (-7,6)--(7,6);
	\draw (0,0)--(0,6);
	\draw [red](0,3)--(-4.5,6);
	\draw [red](0,3)--(0,1);
	\draw [red](-1,0)--(0,1);
	\draw (-1,0)--(-4.5,6);
	\draw (-1,0)--(-3.6,5.15);
	\draw (-1,0)--(-3,4.6);
	\draw (-1,0)--(-2.5,4.15);
	\draw (-1,0)--(-2,3.7);
	\draw (-1,0)--(-1.5,3.2);
	\draw (-1,0)--(-.95,2.6);
	\draw (-1,0)--(-.5,2.05);
	\draw (-1,0)--(-.2,1.55);
	\draw (0,1) .. controls (-.3,2.5) and (-3,4.5) .. (-4.5,6);
	\draw (4,0)--(4.5,6);
	\draw (-5.5,0)--(-4.5,6);
	\draw (-3.5,0)--(-4.5,6);
	\fill[white!40!white,rotate around={30:(-1,0.5)}] (-1,.6) rectangle (-0.2,3);
	\coordinate (P) at (-1.2,1.7);
	\draw (P) node[rotate=-60] (N) {\tiny $(g')^{-1}\left(\frac{x-\xi_{0}^{-}}{t}\right)$};
	\draw[red] (0,4)--(4.5,6);
	\draw[red](0,4)--(0,2);
	\draw [red](1.5,0)--(0,2);
	\draw (1.5,0)--(4.5,6);
	\draw (1.5,0)--(3.6,5.45);
	\draw (1.5,0)--(3,5.11);
	\draw (1.5,0)--(2.5,4.83);
	\draw (1.5,0)--(2,4.5);
	\draw (1.5,0)--(1.5,4.15);
	\draw (1.5,0)--(.95,3.7);
	\draw (1.5,0)--(.5,3.17);
	\draw (1.5,0)--(.2,2.65);
	\draw (0,2) .. controls (.3,4) and (3,5) .. (4.5,6);
	\draw (6,0)--(4.5,6);
	\fill[white!40!white] (.6,2) rectangle (2.5,3);
	\draw (1.5,2.5)node{\tiny $(f')^{-1}\left(\frac{x-\xi_{0}^{+}}{t}\right)$};
	\draw (0,-.2)node{\scriptsize $x=0$};
	\draw (-1,-.2)node{\scriptsize $\xi_{0}^{-}$};
	\draw (-3.5,-.2)node{\scriptsize $\xi_{1}^{-}$};
	\draw (1.5,-.2)node{\scriptsize $\xi_{0}^{+}$};
	\draw (4,-.2)node{\scriptsize $\xi_{1}^{+}$};
	\draw (6,-.2)node{\scriptsize $y_{+}$};
	\draw (-5.5,-.2)node{\scriptsize $y_{-}$};
	\draw (6,0) node{\tiny $\bullet$};
	\draw (4,0) node{\tiny $\bullet$};
	\draw (1.5,0) node{\tiny $\bullet$};
	\draw (-1,0) node{\tiny $\bullet$};
	\draw (-3.5,0) node{\tiny $\bullet$};
	\draw (-5.5,0) node{\tiny $\bullet$};
	\draw (0,0) node{\tiny $\bullet$};
	\draw (.25,1)node{\scriptsize $\tau^{-}_{0}$};
	\draw (-.2,2)node{\scriptsize $\tau_{0}^{+}$};
	\draw (.2,3)node{\scriptsize $T_{2}$};
	\draw (-.15,4)node{\scriptsize $T_{1}$};
	\draw (-1.8,1)node{\scriptsize $\eta_{2}$};
	\draw (-3.4,1)node{\scriptsize $\eta_{1}$};
	\draw (-.4,.3)node{\scriptsize $\eta_{3}$};
	\draw (-1,3.2)node{\scriptsize $A$};
	\draw (-1,5.5)node{\scriptsize $A$};
	\draw (-4.5,6.25)node{\scriptsize $L_{1}$};
	\draw (4.5,6.25)node{\scriptsize $R_{1}$};
	\draw (3.8,1)node{\scriptsize $\gamma_{1}$};
	\draw (2.3,1)node{\scriptsize $\gamma_{2}$};
	\draw (.8,.65)node{\scriptsize $\gamma_{3}$};
	\draw (1,4)node{\scriptsize $B$};
	\draw (1,5)node{\scriptsize $B$};
	\draw[<-](1.8,4.5)--(-1,4.5);
	\draw (-1.2, 4.5)node{\scriptsize $s_{\xi_{0}}^{+}$};
	\draw[<-](-1.8,3.7)--(.1,3.7);
	\draw (.35, 3.7)node{\scriptsize $s_{\xi_{0}}^{-}$};
	\draw (4.5,6) node{\tiny $\bullet$};
	\draw (-4.5,6) node{\tiny $\bullet$};
	\end{tikzpicture}
		\caption{This figure illustrates backward construction with reflected characteristics.}\label{fig-12}
\end{figure}
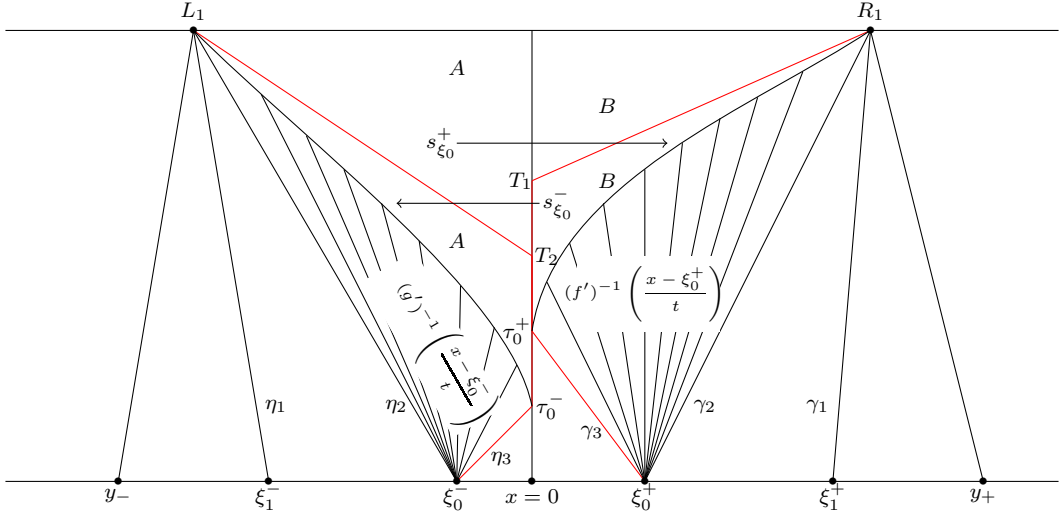

\section{Appendix}\label{appendix}
\begin{proof}[Proof of lemma \ref{stability}]

From the hypothesis on $\{f_k\}$ and $\{g_k\}$, it follows that $\lim\limits_{k\rr \f} (f_k^*, g_k^*)\rr (f^*,g^*)$ in $C^1_{\mbox{loc}}(\R)$. 
Since $\{u_{0,k}\}$ is uniformly bounded in $L^{\f}(\R)$ and converges to $u_{0}$ in $weak^{*}- L^{\f}(\R)$. Hence $\{v_{0,k}\}$ converges to $v_{0}$ uniformly on compact subsets of $\R$ and having uniformly  Lipschitz constant. Hence $\{v_{k}\}$ are having uniformly Lipschitz constant. Hence by Arzela-Ascoli theorem, there exists a subsequence still denoted by $\{v_{k}\}$ converges to $w$ in $C^{0}_{loc }(\R\times [0,\f))$.\\
{\bf Claim}: $\lim\limits_{k\rr\f}ch_{k}(x,t)\subset ch(x,t)$, $v=w$.\\
For $\gamma_{k}\in ch_{k}(x,t)$, then from lemma 4.2 of \cite{Kyoto} (page 38), $\left\{\frac{d \gamma_{k}}{d \theta}\right\}$ is uniformly bounded and hence for subsequence $\{\gamma_{k_{i}}\}$ converges to $\tilde{\gamma}\in ch (x,t)$. In order to prove the claim we need to show that $\tilde{\gamma}\in ch (x,t)$. If $\gamma\in c (x,t)$ then $v_{k}(x,t)=\Gamma_{v_{0,k},\gamma_{k}}(x,t)\le \Gamma_{v_{0,k,\gamma}}(x,t)$. Letting $k=k_{i}$ and $k_{i}\rr\f$  to obtain 
\begin{equation*}
w(x,t)=\lim\limits_{k_{i}\rr\f}v_{k_{i}}(x,t)=\Gamma_{v_{0},\tilde{\gamma}}(x,t)\le \Gamma_{v_{0},\gamma}(x,t).
\end{equation*}
Hence $\ti{\gamma}\in ch(x,t)$  and 
\begin{equation*}
w(x,t)=\inf_{\gamma\in c(x,t)} \Gamma_{v_{0},\gamma}(x,t)=v(x,t).
\end{equation*}
This proves the claim. Hence by uniqueness of the limit, it follows that $\lim\limits_{k\rr\f}v_{k}=v$ in $C^{0}_{loc}(\R)$ and $\lim\limits_{k\rr\f}ch_{k}(x,t)\subset ch(x,t)$. Since Lipschitz constant of $\{v_{k}\}$ are uniformly bounded, hence for any $\varphi\in C^{1}_{0}(\R\times(0,\f))$, we have for $\Omega=\R\times(0,\f)$
\begin{eqnarray}
\lim\limits_{k\rr\f}\int\limits_{\Omega}^{}\frac{\partial v_{k}}{\partial x}\varphi dxdt&=&-\lim\limits_{k\rr\f}\int\limits_{\Omega}^{}v_{k}\frac{\partial \varphi}{\partial x} dxdt\\
&=&-\int\limits_{\Omega}^{}\lim\limits_{k\rr\f}v_{k}\frac{\partial \varphi}{\partial x} dxdt\\
&=&-\int\limits_{\Omega}^{}v\frac{\partial \varphi}{\partial x} dxdt.
\end{eqnarray}
Hence $\frac{\partial v_{k}}{\partial x}\rr \frac{\partial v}{\partial x}$ in $D'(\Omega)$. This proves the lemma.
\end{proof}

\section*{Acknowledgments} The author A is supported by Raja Ramanna fellowship.  The author  SSG  acknowledge the support of the Department of Atomic Energy, Government of India, 
under project no. 
$12$-$R \&$  D-TFR-5.01-0520. 
 SSG would like to thank
Inspire faculty-research grant DST/INSPIRE/04/2016/000237. 
The authors 
would like to thank Gran Sasso Science Institute, L'Aquila, Italy for the support.  Authors would like to thank Pierangelo Marcati for enlightening discussions and helping while writing the initial version of the article.

\end{document}